\documentclass[a4paper, 11pt, headings = small, abstract]{scrartcl}
\usepackage{RGDM_preamble}

\bibliography{literature}
\title{\fontsize{16}{19} \selectfont Statistical guarantees for denoising reflected diffusion models}
\author{Asbjørn Holk\thanks{Aarhus University, Department of Mathematics, Ny Munkegade 118, 8000 Aarhus C, Denmark. \newline Email: \href{mailto:a.holk@math.au.dk}{a.holk@math.au.dk}} \and Claudia Strauch\thanks{Heidelberg University, Institute for Mathematics, Im Neuenheimer Feld 205, 69120 Heidelberg, Germany. \newline Email: \href{mailto:strauch@math.uni-heidelberg.de}{strauch@math.uni-heidelberg.de}} \and Lukas Trottner\thanks{University of Stuttgart, Department of Mathematics, Wankelstraße 5, 70563 Stuttgart, Germany. \newline Email: \href{mailto:lukas.trottner@isa.uni-stuttgart.de}{lukas.trottner@isa.uni-stuttgart.de}}}
\date{\today}

\begin{document}
\maketitle

\begin{abstract}
In recent years, denoising diffusion models have become a crucial area of research due to their abundance in the rapidly expanding field of generative AI. While recent statistical advances have delivered explanations for the generation ability of idealised denoising diffusion models for high-dimensional target data, implementations introduce thresholding procedures for the generating process to overcome issues arising from the unbounded state space of such models. This mismatch between theoretical design and implementation of diffusion models has been addressed empirically by using a \emph{reflected} diffusion process as the driver of noise instead. In this paper, we study statistical guarantees of these denoising reflected diffusion models. {\color{black}In particular, under Sobolev smoothness assumptions, we establish rates of convergence in total variation which, up to a polylogarithmic factor, match the minimax lower bound.} Our main contributions include the statistical analysis of this novel class of denoising reflected diffusion models and a refined score approximation method in both time and space, leveraging spectral decomposition and rigorous neural network analysis.
\end{abstract}

\section{Introduction}\label{sec:intro}
Deep generative models (DGMs) are a broad class of models that train deep neural networks to generate synthetic samples from a target distribution representing a given training data set. Examples that have shown tremendous empirical success in the last decade include the classes of Generative Adversarial Networks (GANs) \citep{goodfellow14,nowo16,arjovsky17,zhao17}, Variational Autoencoders  \citep{kingma14} and normalising flows \citep{papa21,rezende15,tabak13}. Most recently, dynamic generative methods, which work by learning the reverse dynamics of a forward noising process that gradually evolves the original data distribution into a simple and easy-to-sample-from distribution, have gained much traction in the machine learning community. The most prominent class of such models are Denoising Diffusion Models (DDMs) in discrete and continuous time \citep{song19,ho20,song21,sohl-d15}, which add Gaussian noise to the data and for which the backward dynamics is determined by the gradient of the log likelihood of the forward marginals, also known as the score. The score is intractable as it depends on the unknown initial data distribution and therefore needs to be learned, which is why these models are also referred to as score-based generative models. 

These models have been adapted and improved for enhanced efficiency and performance in numerous applications. However, the statistical theory for standard DGMs is still under active development with many basic questions left unanswered. The statistical theory for GANs is perhaps the most mature among DGMs, with significant efforts in the recent past \citep{puchkin24,tang23,schreuder21,chen22,chae22,liang21,stephan24,stephan23} to build a minimax theory that, among other things, draws on deeper connections to optimal transport theory \citep{villani09,chewi24}.

\paragraph{Related work}
Starting only recently with the seminal paper \cite{oko23}, the study of statistical convergence guarantees for standard denoising diffusion models has experienced rapid growth in the past two years.  {\color{black}For denoising diffusion models in $\R^d$ that transform data into noise via a forward Ornstein--Uhlenbeck process, \cite{oko23} prove total variation and Wasserstein-1 rates that match the minimax lower bound over Besov classes up to log factors for the total variation and arbitrarily small polynomial loss for the Wasserstein-1 distance}. Their assumptions on the initial distribution with density $p_0$ can be summarised by three key components: 
\begin{enumerate}[label = (\roman*)]
\item $p_0$ is compactly supported on the $d$-dimensional hypercube $[-1,1]^d$; 
\item $p_0$ is bounded away from zero on its support;
\item $p_0$ has Besov smoothness of order $s$ away from the support boundary (where $s$ is allowed to be sufficiently small to not necessarily imply continuity of $p_0$) and is infinitely differentiable close to the boundary.
\end{enumerate}
These assumptions provide a natural starting point from a statistical perspective, since they allow one to easily control certain quantities in the approximation analysis that would otherwise obscure the central mathematical ideas. However, these assumptions have practical limitations since, in many empirical applications, data distributions tend to be multimodal and extremely high-dimensional. Consequently, the resulting optimal rates, derived in terms of the ambient dimension $d$, often do not align with the empirical success of diffusion models in generating high-quality samples in such complex scenarios. This gap has motivated recent studies to extend these results to cases where data distributions are constrained to lower-dimensional structures, aligning with the widely held \emph{manifold hypothesis}, which suggests that popular high-dimensional training data sets are supported on lower-dimensional manifolds. \cite{chen23b}, \cite{tang24} and \cite{azangulov24} have expanded the analysis to such structured settings, refining the  rates in Wasserstein-1 distance and providing theoretical foundations for understanding the adaptivity of DDMs to low-dimensional structures.

\paragraph{Our contribution}
Our work offers a different type of extension: we provide statistical convergence guarantees for \textit{denoising reflected diffusion models} (DRDMs). This class of generative models was introduced and empirically implemented in \cite{lou23}, motivated by the observation that, in practice, the implementation of the generation process of standard DDMs relies significantly on the incorporation of thresholding procedures to prevent the backward process, which has unbounded state space, from exiting the permitted range of target samples (say, e.g., tensors containing bounded RGB values of pixels in an image). Since such thresholding procedures have no theoretical justification in the design of unconstrained diffusion models, \cite{lou23} suggest using reflected diffusion processes on bounded domains $D$ as the forward noising process and demonstrate competitive performance to state-of-the-art  models with comparable training and inference times. {\color{black}Specifically, \cite{lou23} implement a time-changed reflected Brownian motion as forward model, which yields a uniform ergodic distribution on the bounded domain $\overline{D}$ that is used as initialisation of the backward generative process with learned score obtained by denoising score matching, see below.} We also refer to \cite{fishman23} and \cite{fishman23b} for related work on constrained diffusion models. 
{\color{black}
To help the reader quickly identify the main technical novelties and to place our results in the context of the existing literature, let us briefly summarise core contributions of this paper:
\begin{itemize}
    \item We derive an explicit upper bound on the expected total variation distance between the data distribution $p_0$ and the distribution induced by the DRDM with estimated score, with a resulting convergence rate which matches the well-known minimax lower bounds over Sobolev classes up to logarithmic factors.
    \item We derive a spectral characterisation of the time-dependent score function for reflected diffusions generated by self-adjoint operators with Neumann boundary conditions, yielding an explicit representation amenable to statistical analysis.
    \item We construct a calibrated neural network approximation scheme for these spectral score representations, combining truncation of eigenexpansions with space-time interpolation to balance approximation accuracy and model complexity.
\item We analyse how the semi-explicit nature of transition densities in the reflected setting impacts score estimation and show how the resulting approximation and estimation errors can be controlled despite the absence of closed-form Gaussian expressions.
%\item We outline how the spectral approximation techniques developed in this work may be adapted to more general self-adjoint Markov generators, indicating a potential route towards statistical analyses of broader classes of denoising Markov models.
\end{itemize}
Having outlined these contributions, we now describe the construction and analysis of the reflected denoising dynamics in more detail.} Time-reversal of the forward noising process motivates the formulation of the backward generative model, which under suitable regularity assumptions is again given by a reflected diffusion whose drift is determined by the score function
\[\sco^\circ(x,t) \coloneq \nabla \log p_t(x), \quad p_t(x) \coloneq \int_D q_t(y,x)\, \PP(X_0 \in \diff{y}),\]
where $q_t(x,y)$ are the transition densities of the reflected forward diffusion and $\PP(X_0 \in \cdot)$ is the data distribution. Since the latter is unknown,  the score must be learned from a given i.i.d.\ data sample $(X_{0,i})_{i = 1,\ldots,n}$, which, similarly to the unconstrained diffusion case, is implemented by minimising an empirical version of the denoising score matching loss 
\[\int_{\underline{T}}^{\overline{T}} \int_{D^2} \lvert \sco(y,t) - \nabla_y \log q_t(x,y) \rvert^2 q_t(x,y) \,\PP(X_0 \in \diff{x}) \diff{y} \diff{t},\]
in a class of feedforward ReLU neural networks $\mathcal{S} \ni \sco$, where $\overline{T} - \underline{T} \in (0, \overline{T}]$ is the  stopping time of the backward reflected diffusion. By the equivalence of denoising and explicit score matching \citep{vincent11}, it therefore becomes crucial to precisely calibrate the neural network class in terms of the number of observations $n$, the dimension $d$ and the smoothness $s$ of the data to balance the explicit score approximation error
\[\min_{\sco \in \mathcal{S}} \int_{\underline{T}}^{\overline{T}} \int_D \lvert\sco(x,t) - \nabla \log p_t(x) \rvert^2 p_t(x) \diff{x} \diff{t}\]
and the complexity of the class $\mathcal{S}$, measured in terms of its covering number w.r.t.\ the supremum norm. 
While the approximation analysis for standard diffusion models with Ornstein--Uhlenbeck forward noising process \citep{oko23,chen23b,tang24,azangulov24} can rely on the explicitly known Gaussian transition densities $q_t(x,y)$, this situation dramatically changes in the reflected model, where the transition densities are at best semi-explicit and need to be numerically approximated. We choose a forward reflected diffusion with self-adjoint weighted Laplacian $\nabla \cdot f\nabla$ subject to a Neumann boundary condition as a generator, which provides us with a space-time spectral decomposition of the transition densities of the form
\begin{equation}\label{eq:trans_dens}
q_t(x,y) = \sum_{j=0}^\infty \mathrm{e}^{-t\lambda_j} e_j(x)e_j(y), \quad t > 0, x,y \in D,
\end{equation}
for eigenpairs $(\lambda_j,e_j)_{j \in \N_0}$ of $-\nabla \cdot f\nabla$ with known growth behaviour. 
The statistical estimation of the conductivity $f$ based on such reflected diffusion data has been recently studied for low- and high-frequency observations in \cite{nickl23} and \cite{hoffmann22}, respectively. In the low-frequency statistical setting, the computational challenges arising  from the semi-explicit nature of \eqref{eq:trans_dens} have been subsequently analysed in \cite{giordano24}. {\color{black}Let us note that, for the specific choice of $f\equiv 1/2$, the generator is given by $\tfrac{1}{2} \Delta$ and thus yields a reflected Brownian motion as a forward model. Up to a time change, we therefore cover the reflected diffusion models implemented in \cite{lou23}. Independently of our choice for the heat conductivity $f\colon \R^d \to [f_{\min},\infty) \subset (0,\infty)$, the forward noising process has a uniform ergodic distribution from which we sample to initialise the backward generating process. The conductivity $f(x)$ controls the speed at which the forward process diffuses around $x$, with larger values pushing the process faster towards equilibrium, but also increasing the oscillation of the eigenfunctions $e_j$ around $x$. This implies a tradeoff between the convergence speed of the forward model and the approximation difficulty of the backward model. For some potential $V \colon \R^d \to \R$ we could also consider the generator $\mathcal{A}_V \coloneq \mathrm{e}^V \nabla \cdot \mathrm{e}^{-V} f \nabla$ with Neumann boundary conditions instead, resulting in a normally reflected forward diffusion with stationary distribution $\mu_V \propto \mathrm{e}^{-V}$ and a spectral decomposition as in \eqref{eq:trans_dens} with respect to the dominating measure $\mu_V(y) \diff{y}$, see \cite{nickl23}. For the particular choice of $f = 1/2$, this yields a reflected (overdamped) Langevin diffusion process. Our general approximation approach, which is described below, could be adapted to this scenario. However, in the context of generative modelling, we are primarily interested in a stationary forward distribution that is easy to sample from. As long as the domain $D$ is sufficiently nice, this is clearly satisfied for the uniform ergodic distribution in the model considered here, but it is generally difficult for space-dependent choices of $V$, which require sampling from $\mu_V \propto \mathrm{e}^{-V}$ via MCMC methods, which are expensive in the dimension $d$.  From a practical perspective, it is therefore quite natural to consider reflected diffusion models with uniform stationary distribution.}

To get a grasp on the neural network approximation of the score obtained from \eqref{eq:trans_dens}, we  impose assumptions on the data distribution $\PP(X_0 \in \cdot)$, which in the technical setting of \cite{nickl23} provide a natural analogue to the three essential conditions assumed for the data distribution in the statistical analysis of unconstrained diffusion models alluded to above. More precisely, let $H^s_c(D)$ be the class of compactly supported Sobolev functions of order $s$ on $D$, where we extend  any function $\varphi \in H^s_c(D)$ to $\overline{D}$ by setting $\varphi\vert_{\partial D} = 0$. We assume that 
\begin{enumerate}[label = ($\mathcal{H}$0), ref = ($\mathcal{H}$0)]
\item \label{ass:init} there exist a nonnegative function $\tilde{p}_0 \in H^s_c(D)$, for $s \in \N \cap (d/2,\infty)$, and $\alpha > 0$ such that $\PP(X_0 \in \cdot)$ has a density $p_0\colon \overline{D} \to [\alpha,\infty)$ given by $p_0 = \tilde{p}_0 + \alpha$.
\end{enumerate}
This simplifying assumption provides a strictly positive lower bound $\alpha > 0$ on the data distribution, while at the same time avoiding boundary issues associated with reflection thanks to $p_0 \in H^s_c(D)/\R$ and guaranteeing Hölder continuity of $p_0$ by the Sobolev smoothness condition $s > d/2$. 
{\color{black} These conditions are directly comparable to the assumptions made on the data density $p_0$ in \cite{oko23}:
\begin{itemize}
\item in their unconstrained model, they allow the noising and denoising processes to leave the support $[-1,1]^d$ and assume a lower bound $p_0\vert_{[-1,1]^d} \geq \alpha > 0$. We enforce the support constraint by restricting the generative model to $\overline{D} = \supp p_0$ and therefore assume $p_0 = p_0\vert_{\overline{D}} \geq \alpha > 0$.
\item \cite{oko23} assumes Besov smoothness $p_0 \in B^{s}_{p,q}$ for $s > (1/p -1/2) \vee 0$, with the additional restriction that $p_0$ is arbitrarily smooth near the boundary. Since $H^s = B^s_{2,2}$, our Sobolev assumption on $p_0$ is more restrictive, and our requirement that $s > d/2$ implies continuity of $p_0$. For the particular case $p=q=2$, however, \cite{oko23} impose no restrictions on the smoothness parameter $s > 0$. We enforce arbitrary smoothness close to the boundary in a specific way by modelling $p_0$ as the shift of a compactly supported function on the open domain $D$.
\end{itemize}
} Given the preceding discussion, our focus in this paper is therefore \emph{not} to optimise our initial data assumptions with regard to what is observed in practice for popular data sets. We rather opt for a set of conditions that shares common principles with previous studies for standard DDMs, but allows us to highlight the central mathematical ideas for score approximation in the reflected setting and to develop a fairly compact approximation theory that we regard as a versatile starting point for future statistical investigations.

Given \ref{ass:init}, the score can be written as 
\[\sco(x,t) = \nabla \log p_t(x) = \frac{\sum_{j=0}^\infty \mathrm{e}^{-t\lambda_j}  \langle p_0, e_j\rangle_{L^2} \nabla e_j(x)}{\sum_{j=0}^\infty \mathrm{e}^{-t\lambda_j} \langle p_0, e_j \rangle_{L^2} e_j(x)}, \quad x \in D, t > 0.\]
Our neural network approximation strategy is then broken down into the following steps: 
\begin{enumerate} 
\item truncate the series representation of $p_t(x)$ at $j=N$ to obtain an approximation $h_N(x,t)$ and corresponding truncated score $\nabla \log h_N(x,t)$, where $N$ needs to be appropriately chosen depending on $n,s$ and $d$;
\item for an appropriately chosen discrete set of time points $\{t_i\}$, use the spatial smoothness of $h_N(x, t_i)$ induced by the Sobolev smoothness of $p_0$ to obtain an efficient neural network approximation of $h_N(\cdot, t_i)$, based on general approximation results from \cite{suzuki19};
\item approximate the space-time function $h_N(x,t)$ by constructing a neural network approximation of the time interpolation of the neural networks from Step 2., where the interpolation degree is adapted to the parameters $N,s$ and $d$.
\end{enumerate}
All steps require a careful calibration of the approximation parameters and accordingly the neural network sizes, which is made possible by a precise analysis of the different levels of numerical errors induced by our stepwise approach. These techniques may be of independent interest since they are applicable to generic spectral decompositions of semigroups associated to self-adjoint Markov generators. In  particular, this is potentially relevant for extending the statistical analysis to the unifying class of \textit{denoising Markov models} that has been recently introduced in \cite{benton_shi24}, see also \cite{ren25}: {\color{black} following our previous discussion of generalised reflected forward models with self-adjoint generator $\mathcal{A}_V = \mathrm{e}^V \nabla \cdot f\mathrm{e}^{-V} \nabla$ and associated spectral decomposition of the transition density $q_t(x,y)$ relative to the invariant density $\mu_V \propto \mathrm{e}^{-V}$, for a general self-adjoint Markov generator $\mathcal{A}$ with invariant distribution $\mu$ and  eigendecomposition $(\lambda_j,e_j)_{j \in \N_0}$ in $L^2(\mu)$, the unknown forward marginals that are needed to express the backward generator in denoising Markov models are decomposed as 
\[p_t(x) = \sum_{j=0}^\infty \mathrm{e}^{-t\lambda_j} \langle p_0, e_j \rangle_{L^2(\mu)} e_j(x).\]
Thus, under appropriate assumptions on $p_0$ and controls on the eigenvalues and eigenfunctions, as well as with detailed knowledge of the invariant distribution $\mu$, a neural-network approximation of the space-time function $p_t(x)$ and functionals thereof appears plausible, using the general strategy introduced in this paper.
}

This strategy allows us to determine an explicit calibration of the neural network class $\mathcal{S}$, the forward terminal time $\overline{T}$ and the backward early stopping time $\overline{T} - \underline{T}$, such that for the empirical denoising score loss minimiser $\hat{\mathfrak{s}}_n$ in this class of neural networks, we obtain the convergence rate 
\[\E\big[\TV(p_0,\cev{p}{}^{\hat{\mathfrak{s}}_n}_{\overline{T}- \underline{T}})\big] \lesssim n^{-\frac{s}{2s+d}} (\log n)^{3} (\log \log n)^{1/2}.\]
Here, $\cev{p}{}^{\hat{\mathfrak{s}}_n}_t$ denotes the density at time $t$ of the backward generating reflected diffusion that has drift term determined by $\hat{\mathfrak{s}}_n$ and is started in the uniform distribution on $D$. {\color{black}This establishes an expected total variation convergence rate (up to small $\log$-factors) for denoising reflected diffusion models that matches the well-known minimax lower bound over Sobolev classes \citep{yang99}.}

\paragraph{Organisation of the paper} 
The paper is structured as follows. In Section \ref{sec:setting}, we provide the necessary technical background on denoising diffusion models. Here, we briefly outline the fundamentals underlying standard unconstrained DDMs, before providing a precise mathematical framework for denoising reflected diffusion models. Section \ref{sec:main} introduces the exact specification of the score estimator via denoising score matching and the associated generative model. We then present our main result, Theorem \ref{theo:main}, with the remainder of the paper dedicated to its proof. The proof preparation proceeds along three sections. In Section \ref{subsec:decomp}, we present the basic error decomposition of the expected total variation risk and establish bounds on the first two sources of error, arising from early stopping and the initiation of the backwards generating process in the invariant uniform distribution on $\overline{D}$. The score matching error as the last component of the error decomposition is discussed in Section \ref{subsec:score_match}, before in Section \ref{subsec:approx}, we construct the neural network approximation of the score that allows us to optimally bound the score matching error. Section \ref{susbsec:proof_main} is then dedicated to proving Theorem \ref{theo:main}, based on the results of the previous subsections. 
In the concluding Section \ref{sec:discussion}, we discuss our findings and highlight promising directions for future research, motivated by the insights and limitations of this paper.

\paragraph{Notation} 
For an open set $D$ and $s \in \N$, we denote by $H^s(D)$ the Sobolev space of functions having weak partial derivatives up to order $s$ in $L^2(D)$. We denote the inner product and corresponding norm on $L^2(D)$ by $\langle \cdot , \cdot \rangle_{L^2(D)}$ and $\lVert \cdot \rVert_{L^2(D)}$, or simply $\langle \cdot , \cdot \rangle_{L^2}$ and $\lVert \cdot \rVert_{L^2}$ if the domain $D$ is fixed and there is no room for confusion. $\lvert \cdot \rvert$ denotes the Euclidean norm on $\R^d$ for any $d \in \mathbb{N}$, and for a function $f\colon \mathcal{X} \to \R^d$ for some space $\mathcal{X}$, $\lVert f \rVert_{\mathcal{X}} = \sup_{x \in \mathcal{X}} \lvert f(x) \rvert$ is its supremum norm.  $\mathcal{C}^s(D)$ denotes the space of functions with continuous partial derivatives of order $s$ in $\overline{D}$, and we let $\mathcal{C}^\infty(D) = \bigcap_{s \in \N} \mathcal{C}^s(D)$. For $\beta \in (0,1]$, $\mathcal{C}^{0,\beta}(D)$ is the space of $\beta$-Hölder continuous functions on $D$. For $1\leq p,q \leq \infty$ and $\alpha > 0$, $\mathcal{B}^\alpha_{p,q}(D)$ denote the usual Besov spaces on $D$, cf.\ \cite{triebel10} for details.

\section{Theoretical background}\label{sec:setting}
Before introducing our framework for DRDMs, we first recall the basic ideas of unconstrained diffusion models. 
Based on observing a finite number of samples corresponding to an \textit{unknown} distribution $p_0$ on $\R^d$, DDMs provide an iterative generative algorithm to create new samples that approximately match the target distribution $p_0$. The general idea is to find a stochastic process that perturbs $p_0$ to a new distribution $p_T$ in such a way that 1) $p_T$ or a good approximation thereof is easy to sample from, and 2) the perturbation is reversible in the sense that we know how to simulate the time-reversed process. In an idealised setting where we have access to the exact specifications of the backward dynamics, samples from $p_0$ can be generated exactly by first sampling from $p_T$ and then running the backward process. However, these dynamics must naturally adapt to the information contained in the unknown $p_0$, so the true backward dynamics must be estimated from the data. 

In the framework of DDMs, this perturbation is done via an SDE, i.e., for some fixed time $T>0$ and suitable drift $b\colon[0, T]\times\R^d\to\R^d$ and diffusion coefficient $\sigma\colon[0, T]\times\R^d\to\R^{d\times d}$, we consider the forward model
\[
	\diff X_t
	=b(t, X_t)\diff{t}+\sigma(t, X_t)\diff{W_t},\quad t\in[0, T], X_0\sim p_0,
\]
where $W=(W_t)_{t\in[0, T]}$ is a standard $d$-dimensional Brownian motion.
Under sufficient regularity conditions \citep{anderson82,haussmann86}, the forward model has a solution $X=(X_t)_{t\in[0, T]}$ with marginal densities $(p_t)_{t \in [0,T]}$ such that the time-reversed process $\cev{X}_t = X_{T-t}$, $t \in [0,T]$, solves
\begin{equation}
	\label{forward_backward_SDE}
	\diff\cev{X}_t
	=-\overline{b}(T-t, \cev{X}_t)\diff{t}+\sigma(T-t, \cev{X}_t)\diff{\overline{W}_t},\quad t\in[0, T], \cev{X}_0\sim p_T,
\end{equation}
for some Brownian motion $(\overline{W}_t)_{t \in [0,T]}$ and drift $\overline{b}\colon [0,T] \times \R^d \to \R^d$  given by 
\[
	\overline{b}_i(t, x)=b_i(t, x)-\frac{1}{p_t(x)}\sum_{j, k=1}^{d}\frac{\partial}{\partial x_j}\big[p_t(x)\sigma_{ik}(t, x)\sigma_{jk}(t, x)\big],\quad i=1,\ldots, d.
\]
Thus, the time-reversed process solves a  time-inhomogeneous SDE, with drift $-\overline{b}(T-\cdot,\cdot)$ and diffusion coefficient $\sigma(T-\cdot,\cdot)$. 
In many practical implementations as well as in the statistical studies \cite{oko23,chen23b,tang24,azangulov24}, the diffusion coefficient is set to $\sigma(t,x) = \gamma(t) \mathbb{I}_d$ for some scalar function $\gamma$, which implies that the forward model is given by a (possibly time-inhomogeneous) Ornstein--Uhlenbeck process with explicit transition densities and the backward drift becomes
\[
	\overline{b}(t, x)
	=b(t, x)-\gamma^2(t)\nabla\log p_t(x),
\]
where $\nabla \log p_t$ is referred to as the \textit{score} of the forward model. 
Substituting this into \eqref{forward_backward_SDE}, we obtain the dynamics
\[
	\diff\cev{X}_t
	=\big(-b(T-t, \cev{X}_t)+\gamma^2(T-t)\nabla\log p_{T-t}(\cev{X}_t)\big)\diff t+\gamma(T-t)\diff \overline{W}_t\quad t\in[0, T], \overline{X}_0\sim p_T,
\]
of the backward process. Then, when $t\rightarrow T$, the density of $\cev{X}_t$ approaches $p_0$, so that simulating the reverse process generates new data samples corresponding to the target $p_0$. 
While we are free to choose the coefficients of our forward process (i.e., $b$ and $\sigma$), the score function $\nabla \log p_t$ depends on $p_0$ and hence needs to be estimated from the data, which is referred to as \emph{score matching}.

\paragraph{Reflected generative diffusion models}
RGDMs follow the same generative principle, but constrain both forward and backward dynamics to a bounded domain. To avoid some technicalities, let us assume that $D\subseteq\R^d$ is an open, connected and bounded set with $\mathcal{C}^\infty$ boundary $\partial D$. We consider the reflected time-homogeneous forward model 
\[\diff{X_t}  = b(X_t) \diff{t} + \sigma(X_t) \diff{W_t} + \nu(X_t) \diff{\ell_t}, \quad X_0 \in \overline{D},\] 
with smooth and bounded coefficients $b\colon \overline{D} \to \R^d$, $\sigma\colon \overline{D} \to \R^{d \times d}$ and conormal reflection determined by 
\[\nu(x) \coloneqq \frac{1}{2}a(x)n(x) = \frac{1}{2}\sum_{i=1}^d \langle \sigma_{\cdot,i}(x), n(x) \rangle \sigma_{\cdot,i}(x) > 0, \quad x \in \partial D,\]
where $a \coloneq \sigma\sigma^\top$ is assumed to be uniformly elliptic. 
Here, $n$ is the inward unit normal vector at the boundary $\partial D$, and the process $(\ell_t)_{t \geq 0}$ is the local time at $\partial D$, which is a nondecreasing continuous process of bounded variation that increases only when the solution $X$ hits the boundary, i.e., $\ell_t = \int_0^t \one_{\partial D}(X_s) \diff{\ell_s}$ almost surely. Because of the conormal reflection, the boundary local time can equivalently be characterised by 
\begin{equation}\label{eq:occ}
\ell_t = \lim_{\varepsilon \to 0} \frac{1}{\varepsilon} \int_0^t \one_{(\partial D)_{\varepsilon}}(X_s) \diff{s},
\end{equation}
where $(\partial D)_{\varepsilon} \coloneqq \{x \in \R^d: \operatorname{dist}(x,\partial D) \leq \varepsilon \}$, and the limit holds both in $L^2$ and almost surely, uniformly on $[0,T]$, cf.\ \citet[Proposition 1.3]{cattiaux88}. 
The boundary reflection process $L_t \coloneqq \int_0^t \varphi(X_s) \diff{\ell_s}$ reflects $X$ in a conormal direction whenever it hits the boundary $\partial D$, thus constraining the state space of the diffusion to the compact set $\overline{D}$. 
The process $X$ is a time-homogeneous Markov process with transition semigroup $(Q_t)_{t \geq 0}$ determined by transition densities $(q_t)_{t \geq 0}$, given as the fundamental solutions of the PDE with conormal Neumann boundary condition,
\begin{align*} 
\begin{cases}
\frac{\partial}{\partial t} u(x,t) = \mathcal{A} u(x,t), &(x,t) \in D \times (0,T] ,\\
\frac{\partial}{\partial \nu} u(x,t) = 0, &(x,t) \in \partial D \times (0,T],
\end{cases}
\end{align*} 
where $\mathcal{A}$ is the second-order differential operator given by 
\[\mathcal{A} = \sum_{i=1}^d b_i(x) \partial_{x_i} + \frac{1}{2}\sum_{i,j=1}^d a_{i,j}(x) \partial_{x_i} \partial_{x_j}.\]
We denote the density of the forward process at time $t$ by $p_t$, i.e., 
\[p_t(x) \diff{x} = \PP(X_t \in \diff{x}\mid X_0 \sim p_0) = \int_{\overline{D}} p_0(y) \,Q_{t}(y,\diff{x}) \diff{y} = \int_{\overline{D}} p_0(y) q_{t}(y,x) \diff{y} \diff{x}, \quad x \in \overline{D}.\]
Similarly to the unconstrained model, \citet[Theorem 2.5]{cattiaux88} shows that time-reversion of the reflected diffusion process yields a time-inhomogeneous reflected diffusion process, whose drift is reminiscent of the unconstrained model. 
More precisely, letting $\cev{X}{}^T = (\cev{X}_t)_{t \in [0,T]}$, $\cev{X}_t = X_{T-t}$ and $\overline{\ell}{}^T = (\overline{\ell}_t)_{t \in [0,T]}$, $\overline{\ell}_t = \ell_{T} - \ell_{T-t}$, $t \in [0,T]$, there exists a Brownian motion $\overline{W}{}^T = (\overline{W}_t)_{t \in [0,T]}$ w.r.t.\ an enlargement of the natural filtration generated by $\cev{X}{}^T$ such that $\cev{X}{}^T$ solves 
\begin{equation}\label{eq:rsde_reflect}
\diff{\cev{X}_t} = -\overline{b}( \cev{X}_t)\diff{t}+\sigma(\cev{X}_t)\diff{\overline{W}_t} + \nu(\cev{X}_t) \diff{\overline{\ell}_t}, \quad \cev{X}_0 \sim p_T,
\end{equation}
on $[0,T)$, where $\overline{b}\colon [0,T] \times \overline{D} \to \R^d$ is given by 
\[
	\overline{b}_i(t, x)=b_i(t, x)-\frac{1}{p_t(x)}\sum_{j, k=1}^{d}\frac{\partial}{\partial x_j}\big[p_t(x)\sigma_{ik}(t, x)\sigma_{jk}(t, x)\big],\quad i=1,\ldots,d.
\]
Note that, by definition, $\overline{\ell}{}^T$ is nondecreasing, has bounded variation and satisfies $\overline{\ell}_t = \int_0^t \one_{\partial D}(\cev{X}_s) \diff{\overline{\ell}_s}$ for $t \in [0,T]$, i.e., $\overline{\ell}{}^T$ is the local time at the boundary of the backward process $\cev{X}{}^T$.

A fundamental requirement for diffusion generative modeling is a precise understanding of the limiting behaviour of the forward process to evaluate the required run time of the forward process for the backward initialisation to be a sufficiently good  approximation of the true terminal forward distribution $p_T$.  To this end, we choose $b = \nabla f$ and $\sigma = \sqrt{2f} \mathbb{I}_{d \times d}$ for some potential $f\colon\R^d \to [f_{\min},\infty) \subset (0,\infty)$. {\color{black}To avoid technicalities, we assume that $f \in \mathcal{C}^\infty(\overline{D})$ and that the smooth and bounded domain $D$ is also convex. This enables us to remain within the technical framework of \cite{nickl23}, which provides useful technical results required for our analysis.} With our choice of coefficients, the time-homogeneous forward dynamics are described by the divergence form $L^2$-generator 
\begin{equation}\label{eq:generator}
\mathcal{A} = \nabla \cdot f \nabla = \langle \nabla f, \nabla \cdot\rangle  + f \Delta
\end{equation}
with core 
\[H^1_\nu(D) \coloneqq \{\varphi \in H^1(D) : \partial \varphi/\partial \nu = 0 \text{ on } \partial D\}\] 
for $\nu = fn$, corresponding to the constrained SDE
\begin{equation}\label{eq:rsde_heat}
\diff{X_t} = \nabla f(X_t) \diff{t} + \sqrt{2f(X_t)} \diff{W_t} + \nu(X_t)\diff{\ell_t}.
\end{equation}
Note that the ellipticity condition $f \geq f_{\min} > 0$ implies that the conormal Neumann boundary condition $\langle\nu(x), \nabla \varphi(x) \rangle = \tfrac{\partial \varphi}{\partial \nu}(x) = 0$ for $x \in \partial D$ is equivalent to a normal Neumann boundary condition, i.e., $H^1_\nu(D) = H^1_n(D)$. 
Thus, both the reflected forward and backward SDEs exhibit normal reflection at the boundary, and \eqref{eq:rsde_heat} induces a space-dependent scaling of local time that ensures that the occupation limit \eqref{eq:occ} holds true. Moreover, the specific choice $f \equiv 1/2$ yields a normally reflected Brownian motion.

By the divergence theorem, it follows that the invariant distribution of the forward Markov process $X$ is the easy-to-sample-from uniform distribution on $\overline{D}$, i.e., $\mu = \tfrac{\operatorname{Leb}\vert_{\overline{D}}}{\operatorname{Leb}(\overline{D})}$. Furthermore, there exist orthonormal eigenpairs $(\lambda_j, e_j)_{j \geq 0}$ of the nonnegative operator $-\nabla \cdot f\nabla$ satisfying $0= \lambda_0 < \lambda_1 \leq \lambda_2 \leq \cdots $ and obeying the Weyl asymptotics $\lambda_j \asymp j^{2/d}$ and $e_0 = \tfrac{1}{\operatorname{Leb}(D)^{1/2}} \one$ and $(e_j)_{j \geq 1} \subset H^1_\nu(D) \cap L^2_0(D)$ such that
\[q_t(x,y) = \sum_{j \geq 0} \mathrm{e}^{-t\lambda_j} e_j(x)e_j(y), \quad x,y \in D.\]
See \citet[Section 3]{nickl23} for a detailed discussion of these properties. 
Because $f \in \mathcal{C}^\infty(\overline{D})$, \citet[Corollary 1]{nickl23} yields the bounds
\begin{align*} 
\lVert e_j \rVert_{H^k} &\lesssim \lambda_j^{k/2} \asymp j^{k/d}, \quad j \geq 1,\\ 
\lVert e_j \rVert_\infty &\lesssim j^{\tau}, \quad \text{ for any } \tau > 1/2.
\end{align*}
This implies the smoothing property that, for any bounded initial density $p_0$, 
\[\lVert p_t \rVert_{H^k} \lesssim \lVert p_0 \rVert_\infty \sum_{j \geq 0} \mathrm{e}^{-t \lambda_j} \lVert e_j \rVert_\infty \lVert e_j \rVert_{H^k} \lesssim \lVert p_0 \rVert_\infty \mathrm{e}^{-t j^{2/d}} j^{\tau + k/d} < \infty, \quad t > 0,  \]
for arbitrary $\tau > 1/2$. By the Sobolev imbedding theorem, we therefore have $p_t \in \mathcal{C}^\infty(D)$, regardless of the smoothness properties of $p_0$, and we can identify the weak derivatives of $p_t$ with its classical derivatives.
The SDE \eqref{eq:rsde_reflect} governing the backward dynamics becomes
\[
\diff{\cev{X}_t} = \big(\nabla f(\cev{X}_t) + 2f(\cev{X}_t) \nabla\log p_{T-t}(\cev{X}_t) \big)\diff{t} + \sqrt{2f(\cev{X}_t)} \diff{\overline{W}_t} + \nu(\cev{X}_t) \diff{\overline{\ell}_t},
\]
with initialisation $\cev{X}_0 \sim p_T$. Thus, even though the diffusion coefficient is state-dependent, the particular interplay between drift and diffusion coefficient ensures that the backward drift is fully determined by the forward drift and the score $(x,t) \mapsto \nabla \log p_t(x)$, as in the unconstrained Ornstein--Uhlenbeck forward model with state-independent diffusion coefficient. 
By the spectral decomposition of the transition densities, the score is explicitly given by 
\begin{equation}\label{eq:score_spectral}
\nabla \log p_t(x) = \frac{\sum_{j \geq 0} \mathrm{e}^{-t\lambda_j} \langle p_0,e_j\rangle_{L^2} \nabla e_j(x)}{\sum_{j \geq 0} \mathrm{e}^{-t\lambda_j} \langle p_0,e_j\rangle_{L^2}   e_j(x)}, \quad x \in D, t > 0,
\end{equation}
which will be instrumental in analysing the score approximation properties of neural networks underlying the algorithm described in the next section.

\paragraph{Neural network classes} 
We will construct an estimator of the score via minimising the denoising score matching error in an appropriate class of neural networks. For doing so, we introduce  parameterised classes of neural networks with ReLU (Rectified Linear Unit) activation function. 
In particular, for any $b, x\in\R^m$, let
\[
	\sigma_b(x)
	=\mat{\sigma(x_1-b_1) \\ \sigma(x_2-b_2) \\ \vdots \\ \sigma(x_m-b_m)},\quad \sigma(y)=y\vee0,
\]
and denote for $L\in\N$, $W\in\N^{L+2}$, $S\in\N$ and $B>0$ by $\Phi(L, W, S, B)$ the class of neural networks with depth (i.e., number of hidden layers) $L$, layer widths (including input and output layers) $W$, sparsity constraint $S$, and norm constraint $B$.
We thus consider functions of the form
\[
	\varphi(x)
	=A_L\sigma_{b_L}A_{L-1}\sigma_{b_{L-1}}\cdots A_1\sigma_{b_1}A_0x,
\]
where $A_i\in\R^{W_{i+1}\times W_{i}},b_i\in\R^{W_{i+1}}$ for $i=0,\ldots,L$ (to ease notation, we always set $b_0=0$), and where there are at most a total of $S$ non-zero entries of the $A_i$'s and $b_i$'s and all entries are numerically at most $B$.
In an abuse of notation, we denote $\sigma_{0}$ simply by $\sigma$.
This can be written succinctly as
\[
	\Phi(L, W, S, B)
	\coloneqq\left\{
	\begin{aligned}
		&A_L\sigma_{b_L}A_{L-1}\sigma_{b_{L-1}}\cdots A_1\sigma_{b_1}A_0\mid A_i\in\R^{W_{i+1}\times W_i}, b_i\in\R^{W_{i+1}}, \\
		&\sum_{i=0}^L(\n{A_i}_0+\n{b_i}_0)\le S,\max_{i\in\{0, \ldots, L\}}(\n{A_i}_\infty\vee\n{b_i}_\infty)\le B
	\end{aligned}
	\right\}.
\]

\section{Generative modelling with reflected diffusions}\label{sec:main}
Denote the true score by $\sco^\circ(x,t) \coloneqq \nabla \log p_t(x)$, and assume we are given data samples $(X_{0,i})_{i \in [n]} \overset{\text{i.i.d.}}{\sim} p_0$. For a hypothesis class $\mathcal{S}$ of neural networks with ReLU activation function, to be exactly calibrated later,  and $\sco \in \mathcal{S} \cup \{\sco^\circ\}$, we define 
\begin{equation}\label{eq:lossfct}
\begin{split}
    L_{\sco}(x) &\coloneqq \int_{\underline{T}}^{\overline{T}} \int_{\overline{D}} \lvert \sco(y,t) - \nabla_y \log q_t(x,y) \rvert^2 q_t(x,y) \diff{y} \diff{t}\\
    &= \E\Big[\int_{\underline{T}}^{\overline{T}} \lvert \sco(X_t,t) - \nabla_y \log q_t(x,X_t) \rvert^2 \mid X_0 = x \Big],
\end{split}
\end{equation}
where $\overline{T}$ is the terminal runtime of the reflected forward process and $\underline{T} \in (0, \overline{T})$ is such that we run the reflected backward process,  which is initialised with distribution $\mathcal{U}(D)$,  until $\overline{T} - \underline{T}$. We then denote the empirical score matching loss associated to $\sco$ by
\[\hat{L}_{\sco,n} \coloneqq \frac{1}{n} \sum_{i=1}^n L_{\sco}(X_{0,i}),\]
and we define the empirical score minimiser by
\begin{equation}\label{eq:score_est}
\hat{\mathfrak{s}}_n \coloneqq \argmin_{\mathfrak{s} \in \mathcal{S}} \hat{L}_{\mathfrak{s},n}.
\end{equation}
We let $\overline{X}^{\sco}$ be a solution of the reflected SDE 
\begin{equation}\label{eq:rev_score_approx}
\begin{split}
\diff{\overline{X}^{\mathfrak{s}}_t} &= \big(\nabla f(\overline{X}^{\mathfrak{s}}_t) + 2f(\overline{X}^{\mathfrak{s}}_t) s(\overline{X}^{\mathfrak{s}},t) \big)\diff{t} + \sqrt{2f(\overline{X}^{\mathfrak{s}}_t)} \diff{\overline{W}_t} + \nu(\overline{X}^{\mathfrak{s}}_t) \diff{\overline{\ell}_t}, \quad t \in [0, \overline{T} - \underline{T}],\\
\overline{X}^{\mathfrak{s}}_0 &\sim \mathcal{U}(\overline{D}),
\end{split}
\end{equation}
for some Brownian motion $(\overline{W}_t)_{t \in [0, \overline{T} - \underline{T}]}$ and local time $(\overline{\ell}_t)_{t \in [0, \overline{T} - \underline{T}]}$ at the boundary $\partial D$, and we denote its density at time $t$ by $\cev{p}{}^{\mathfrak{s}}_t$. Here, the initialisation $X^{\hat{\mathfrak{s}_n}}_0 \sim \mathcal{U}(\overline{D})$ and the Brownian motion $\overline{W}$ are chosen independently of the data $(X_{0,i})_{i = 1,\ldots,n}$.   Then, $(\cev{p}{}^{\hat{\mathfrak{s}}_n}_t)_{t \in [0, \overline{T})}$ are the densities of the backward process driven by the score estimate $\hat{\mathfrak{s}}_n$. In particular, $\cev{p}{}^{\hat{\mathfrak{s}}_n}_{\overline{T}-\underline{T}}$ is the density of the generated data obtained from stopping the backward process early at time $\overline{T} - \underline{T}$. Assessing the quality of the generated samples therefore boils down to analysing the distance between the distribution induced by $p_0$ and the (random) distribution induced by $\cev{p}{}^{\hat{\mathfrak{s}}_n}_{\overline{T}-\underline{T}}$.

In this paper, we use the total variation distance as the divergence measure. For two probability measures $\mathbf{P},\mathbf{Q}$ on $\overline{D}$ with Lebesgue densities $p,q$, the total variation distance is denoted by
\[\TV(p,q) \equiv \TV(\mathbf{P},\mathbf{Q}) \coloneqq \sup_{A \in \mathcal{B}(\overline{D})} \lvert \mathbf{P}(A) - \mathbf{Q}(A)\rvert.\]
Our main result is the following.

\begin{theorem} \label{theo:main}
Assume that $p_0 = \tilde{p}_0 + \alpha$ for some nonnegative $\tilde{p}_0 \in H^s_c(D)$ and $\alpha > 0$, where $s \in \N \cap (d/2,\infty)$. Let
\[\underline{T} \asymp  n^{-\frac{2s}{\beta(2s+d)}} \quad \text{and} \quad \overline{T} = \frac{s}{\lambda_1(2s+d)} \log n,\] 
where $\beta = 1$ if $s > d/2+1$, $\beta = s-d/2$ if $s \in (d/2,d/2+1)$ and for $s= d/2+1$, $\beta$ can be arbitrarily chosen in $(0,1)$. 
Then, there exists a class of  neural networks  
\[\mathcal{S} = \big\{\mathfrak{s} \in \Phi\big(L(n),W(n),S(n),B(n)\big): \lVert \mathfrak{s}(\cdot,t)\rVert_D \leq C(t^{-1/2} \vee 1) \, \forall t \in [\underline{T}, \overline{T}] \big\},\] 
for some global constant $C> 0$, with network sizes
\begin{align*}
L(n)
&\lesssim\log n\log\log n, \\
\n{W(n)}_{\infty}
&\lesssim  n^\frac{d}{2s+d}(\log n)^2, \\
S(n)
&\lesssim n^\frac{d}{2s+d}(\log n)^3,\quad\text{and} \\
B(n)
&\lesssim {\color{black}n^{\frac{2s}{\beta(2s+d)}}},
\end{align*}
such that, for $\hat{\mathfrak{s}}_n$ given by \eqref{eq:score_est}, it holds for $n$ large enough that
\begin{equation}\label{eq:conv_rate}
\E\big[\TV(p_0,\cev{p}{}^{\hat{\mathfrak{s}}_n}_{\overline{T}- \underline{T}})\big] \lesssim n^{-\frac{s}{2s+d}} (\log n)^{3} (\log \log n)^{1/2}.
\end{equation}
\end{theorem}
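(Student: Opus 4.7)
The plan is to decompose the expected total-variation risk via the triangle inequality,
\[\E\big[\TV(p_0,\cev{p}{}^{\hat{\mathfrak{s}}_n}_{\overline{T}-\underline{T}})\big] \le \TV(p_0, p_{\underline{T}}) + \TV\big(p_{\underline{T}}, \cev{p}{}^{\sco^\circ}_{\overline{T}-\underline{T}}\big) + \E\big[\TV\big(\cev{p}{}^{\sco^\circ}_{\overline{T}-\underline{T}}, \cev{p}{}^{\hat{\mathfrak{s}}_n}_{\overline{T}-\underline{T}}\big)\big],\]
where $\cev{p}{}^{\sco^\circ}$ denotes the backward process driven by the true score but still initialised at $\mathcal{U}(\overline{D})$. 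The three contributions are an early-stopping bias, an initialisation bias, and a statistical score-matching error; the choices of $\underline{T}$, $\overline{T}$ and $\mathcal{S}$ in the theorem are calibrated so that each contributes $n^{-s/(2s+d)}$ up to logarithmic factors.

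For the first two terms, carried out in Section \ref{subsec:decomp}, I would exploit that $s > d/2$ and Sobolev embedding give $p_0 \in \mathcal{C}^{0,\beta}(\overline{D})$; combined with the smoothing bound $\lVert p_t \rVert_{H^k} < \infty$ coming from the spectral decomposition \eqref{eq:trans_dens}, this yields $\TV(p_0, p_{\underline{T}}) \lesssim \underline{T}^{\beta/2}$, which matches $n^{-s/(2s+d)}$ under the prescribed $\underline{T}$. For the initialisation error, \eqref{eq:trans_dens} together with $\lambda_0 = 0$ and $e_0 \propto \one$ gives $\TV(p_{\overline{T}}, \mathcal{U}(\overline{D})) \lesssim \mathrm{e}^{-\lambda_1 \overline{T}}$, which propagates through the backward dynamics driven by the true score by data-processing; the choice $\overline{T} = \frac{s \log n}{\lambda_1(2s+d)}$ again delivers $n^{-s/(2s+d)}$.

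For the statistical term (Section \ref{subsec:score_match}) I would apply Girsanov's theorem to \eqref{eq:rsde_reflect} and \eqref{eq:rev_score_approx}; the reflection local times cancel under the change of measure, since both processes reflect at the same boundary with the same conormal direction and have no quadratic variation. Together with Pinsker's inequality this produces
\[\TV\big(\cev{p}{}^{\sco^\circ}_{\overline{T}-\underline{T}}, \cev{p}{}^{\hat{\mathfrak{s}}_n}_{\overline{T}-\underline{T}}\big)^2 \lesssim \int_{\underline{T}}^{\overline{T}} \int_D \lvert \hat{\mathfrak{s}}_n(x,t) - \sco^\circ(x,t)\rvert^2 p_t(x)\,\diff{x}\diff{t}.\]
The Vincent identity \cite{vincent11} turns this explicit score loss into a quantity differing from $\E[L_{\hat{\mathfrak{s}}_n}(X_0)]$ by a term independent of $\mathfrak{s}$, after which a Bernstein-type empirical-process argument, for which the sup-norm cap $\lVert \mathfrak{s}(\cdot,t)\rVert_D \lesssim t^{-1/2}\vee 1$ imposed on $\mathcal{S}$ is essential, reduces control of the right-hand side to (i) an approximation error $\inf_{\mathfrak{s} \in \mathcal{S}} \int_{\underline{T}}^{\overline{T}} \int_D \lvert \mathfrak{s} - \sco^\circ\rvert^2 p_t \,\diff{x}\diff{t}$ and (ii) a complexity term proportional to $\log \mathcal{N}(\mathcal{S}, \varepsilon, \lVert \cdot \rVert_\infty)/n$, which scales like $n^{d/(2s+d)}$ up to $\log n$ factors given the budgets for $L,W,S,B$.

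The main obstacle is executing the three-step constructive score approximation of Section \ref{subsec:approx} so that (i) also balances at $n^{-s/(2s+d)}$. Starting from \eqref{eq:score_spectral}, I would first truncate numerator and denominator at level $N \asymp n^{d/(2s+d)}$; the Sobolev norm $\lVert p_0 \rVert_{H^s}$, the Weyl asymptotics $\lambda_j \asymp j^{2/d}$ and the eigenfunction bounds $\lVert e_j \rVert_{H^k} \lesssim j^{k/d}$ and $\lVert e_j \rVert_\infty \lesssim j^\tau$ control the spectral tail uniformly in $t \in [\underline{T}, \overline{T}]$. Second, on a time grid $\{t_i\} \subset [\underline{T}, \overline{T}]$, I would use Suzuki's Besov approximation result \cite{suzuki19} to build ReLU networks approximating $h_N(\cdot, t_i)$ and $\nabla h_N(\cdot, t_i)$, both lying in $H^s$-balls of $t$-independent radius. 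Third, these spatial networks would be glued into a single space-time network by a ReLU implementation of piecewise polynomial time interpolation, with interpolation degree tuned to $(N, s, d)$; this is where the $(\log n)^2$--$(\log n)^3$ widths and the $\log n \log \log n$ depth arise. Finally, the ratio $\nabla h_N / h_N$ would be realised by an approximate-division subnetwork, which is safe because the strict positivity $p_0 \ge \alpha > 0$ in \ref{ass:init} keeps $h_N$ uniformly bounded away from zero on $[\underline{T}, \overline{T}]$, preventing the division layer's Lipschitz constant from blowing up. The delicate part is the simultaneous bookkeeping of sup-norm errors through gradient, division and interpolation subnetworks and their propagation through the Girsanov bound while staying within the declared neural-network budget; this accounts for the extra $(\log n)^3 (\log \log n)^{1/2}$ factor in the final rate.
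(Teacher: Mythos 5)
Your proposal follows essentially the same route as the paper: the same triangle-inequality decomposition into early-stopping, ergodic-initialisation and score-matching contributions; Girsanov plus Pinsker plus the Vincent identity to reduce the generalisation error to an approximation term plus a covering-number term; and the identical three-step constructive approximation (spectral truncation at $N\asymp n^{d/(2s+d)}$, Suzuki-type Besov approximation on a time grid, ReLU interpolation and an approximate-division subnetwork protected by the lower bound $\alpha>0$).

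Two small points of divergence are worth noting, neither of which is a genuine gap. First, you apply the data-processing inequality to the \emph{true}-score backward process (comparing starts $p_{\overline{T}}$ versus $\mathcal{U}(\overline{D})$) so that your Girsanov step compares two backward SDEs both initialised at $\mathcal{U}(\overline{D})$; the paper instead applies data processing to the \emph{estimated}-score backward process and runs Girsanov between two backward SDEs both initialised at $p_{\overline{T}}$. Since the Girsanov/KL bound is insensitive to which common initial law is chosen and data processing holds for either transition kernel, the two decompositions are interchangeable. Second, the mechanism you invoke for the early-stopping bound is not quite right: the smoothing estimate $\lVert p_t\rVert_{H^k}<\infty$ degenerates as $t\to0$ and cannot by itself yield a rate. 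What actually delivers $\TV(p_0,p_{\underline{T}})\lesssim\underline{T}^{\beta/2}$ in Lemma~\ref{lem:early_stop} is the reversibility identity $p_t(x)=\E[p_0(X_t)\mid X_0=x]$ combined with the small-time Gaussian heat-kernel upper bound of Davies, so that the Hölder continuity $p_0\in\mathcal{C}^{0,\beta}$ controls $\E[\lvert X_t-X_0\rvert^\beta]\lesssim t^{\beta/2}$. You correctly identified Hölder continuity from Sobolev embedding as the key input, so this is a presentation slip rather than a missing idea. Otherwise the plan matches the paper's argument in Section~\ref{susbsec:proof_main}.
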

The proof of this theorem is prepared in the following sections. There, we will always work under the assumption on $p_0$ from the theorem, that is, we assume 
\begin{enumerate}[label = ($\mathcal{H}$0), ref = ($\mathcal{H}$0)]
\item there exist a nonnegative function $\tilde{p}_0 \in H^s_c(D)$, for $s \in \N \cap (d/2,\infty)$, and $\alpha > 0$ such that $p_0 = \tilde{p}_0 + \alpha$,
\end{enumerate}
without further comment. Note that since $s > d/2$, the Sobolev imbedding theorem yields the continuous imbedding $H^s_c(D) \hookrightarrow \mathcal{C}^{0,\beta}(D)$, where $\beta$ is defined depending on $s$ and $d$ as stated in Theorem \ref{theo:main}. Thus, we may consider $p_0$ as a continuous function such that there exist some Hölder constants $\beta \in (0,1],c_\beta \in (0,\infty)$, fixed throughout the rest of the paper, such that 
\begin{equation}\label{eq:hoelder}
\lvert p_0(x) - p_0(y) \rvert \leq c_\beta \lvert x - y \rvert^\beta, \quad x,y \in D.
\end{equation}

\subsection{Error decomposition}\label{subsec:decomp}
Let $\overline{p}_t^{\mathfrak{s}}$ be the density at time $t$ of the solution to the SDE \eqref{eq:rev_score_approx} with initial condition $\overline{X}^{\mathfrak{s}}_0 \sim p_{\overline{T}}$ (independent of the data $(X_{0,i})_{i=1,\ldots,n}$) replacing $\overline{X}^{\mathfrak{s}}_0 \sim \mathcal{U}(\overline{D})$. In particular, we then notice that $\overline{p}{}^{\mathfrak{s}^\circ}_{t} = p_{\overline{T} - t}$ and therefore $\overline{p}{}^{\mathfrak{s}^\circ}_{\overline{T} - \underline{T}} = p_{ \underline{T}}$.  As for unconstrained diffusion models, cf.\ \cite{oko23}, the triangle inequality for the total variation distance then implies the error decomposition bound
\begin{equation}\label{eq:error_decomp}
\begin{split}
\E\big[\operatorname{TV}(p_0,\cev{p}{}^{\hat{\mathfrak{s}}_n}_{\overline{T}- \underline{T}})\big]&\le \TV(p_0,p_{\underline{T}}) + \TV(\PP(X_{\overline{T}} \in \cdot \mid X_0 \sim p_0), \mathcal{U}(\overline{D}))\\
&\quad+ \E\big[\TV(\overline{p}{}^{\mathfrak{s}^\circ}_{\overline{T} - \underline{T}}, \overline{p}{}^{\hat{\mathfrak{s}}_n}_{\overline{T} - \underline{T}}) \big].
\end{split}
\end{equation}
The generalisation error therefore splits into three separate error contributions: 
\begin{enumerate} 
\item the first term represents the error induced by stopping early the backward process initialised by the true forward terminal density $p_{\overline{T}}$  at time $\overline{T} - \underline{T}$; 
\item the second term is the error associated to starting the backward process in its stationary distribution instead of $p_{\overline{T}}$;
\item the third term quantifies the error coming from running the backward process with the drift determined by the estimated score $\hat{\mathfrak{s}}_n$ instead of the true score $\mathfrak{s}^\circ$. 
\end{enumerate}
We start with controlling the first two terms before treating the most challenging score approximation error. The early stopping contribution to the error decomposition is controlled via small time heat kernel bounds for the transition densities in the following lemma.

\begin{lemma}\label{lem:early_stop}
There exists a constant $C$ depending only on $f, d, D, \beta$ and $c_\beta$ such that
	\[
		\mathrm{TV}(p_0, p_{\underline{T}})
		\le C\underline{T}^{\beta/2},\qquad \underline{T}\le 1.
	\]
\end{lemma}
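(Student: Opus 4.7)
The plan is to combine the Hölder continuity \eqref{eq:hoelder} of $p_0$ inherited from the Sobolev embedding $H^s_c(D) \hookrightarrow \mathcal{C}^{0,\beta}(D)$ with a small-time Gaussian upper bound for the reflected transition density $q_t$.

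First, the spectral decomposition \eqref{eq:trans_dens}, together with $\lambda_0=0$ and $e_0 \equiv \operatorname{Leb}(D)^{-1/2}$ and the mean-zero property of $(e_j)_{j\ge 1}$, yields the symmetry $q_t(x,y)=q_t(y,x)$ and the conservation identity $\int_{\overline{D}} q_t(x,y)\,\mathrm{d}y = 1$. Writing $p_{\underline{T}}(x) = \int_{\overline{D}} p_0(y)\, q_{\underline{T}}(x,y)\,\mathrm{d}y$ and subtracting $p_0(x) \cdot 1$, one obtains
\[
p_{\underline{T}}(x) - p_0(x) = \int_{\overline{D}} \bigl(p_0(y) - p_0(x)\bigr)\, q_{\underline{T}}(x,y)\,\mathrm{d}y,
\]
and hence, by \eqref{eq:hoelder},
\[
\bigl\lvert p_{\underline{T}}(x) - p_0(x) \bigr\rvert \leq c_\beta \int_{\overline{D}} \lvert y-x \rvert^\beta q_{\underline{T}}(x,y)\,\mathrm{d}y.
\]

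Next, I would invoke the classical Aronson-type Gaussian upper bound for heat kernels of uniformly elliptic divergence-form operators under conormal Neumann boundary conditions on smooth bounded domains: since $f \in \mathcal{C}^\infty(\overline{D})$ with $f \ge f_{\min} > 0$ and $\partial D \in \mathcal{C}^\infty$, there exist constants $c_1,c_2>0$ depending only on $f$ and $D$ such that, for $t \in (0,1]$,
\[
q_t(x,y) \leq c_1\, t^{-d/2} \exp\!\bigl(-c_2 \lvert x-y\rvert^2/t\bigr), \qquad x,y \in \overline{D}.
\]
Extending the integration domain to $\mathbb{R}^d$ and substituting $z = (y-x)/\sqrt{t}$ then gives, uniformly in $x \in \overline{D}$,
\[
\int_{\overline{D}} \lvert y - x \rvert^\beta q_t(x,y)\,\mathrm{d}y \leq c_1\, t^{\beta/2} \int_{\mathbb{R}^d} \lvert z \rvert^\beta \mathrm{e}^{-c_2 \lvert z \rvert^2}\,\mathrm{d}z \leq C t^{\beta/2}.
\]

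Combining these estimates with the Hölder bound yields $\sup_{x \in \overline{D}} \lvert p_0(x) - p_{\underline{T}}(x) \rvert \leq c_\beta C\, \underline{T}^{\beta/2}$, and integrating over the bounded set $D$ produces the desired
\[
\mathrm{TV}(p_0,p_{\underline{T}}) = \tfrac{1}{2}\int_{\overline{D}} \lvert p_0 - p_{\underline{T}}\rvert\,\mathrm{d}x \leq \tfrac{c_\beta C \operatorname{Leb}(D)}{2}\, \underline{T}^{\beta/2}.
\]
The main subtlety is securing the Aronson-type Gaussian upper bound in the reflected setting. This is standard parabolic theory for uniformly elliptic divergence-form operators with Neumann boundary data on smooth bounded domains, but one needs to be careful to pin down the dependence of the constants on $f$, $d$, and $D$. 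If one wishes to avoid invoking this classical fact as a black box, an alternative is to compare $q_t$ with the Euclidean heat kernel via the perturbation $\nabla \cdot f \nabla = f\Delta + \langle \nabla f, \nabla \cdot\rangle$ and to handle the boundary contribution using Duhamel's principle, but this is substantially heavier than the direct route above.
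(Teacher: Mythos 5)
Your proof is correct and follows essentially the same route as the paper: both use symmetry of the transition densities to write $p_{\underline{T}}(x)-p_0(x)$ as an integral against $q_{\underline{T}}(x,\cdot)$, apply the Hölder bound \eqref{eq:hoelder}, and then invoke the short-time Gaussian upper bound on $q_t$ (the paper cites \cite[Corollary 3.2.9]{davies89} for exactly the Aronson-type bound you describe) before integrating over the bounded domain $D$. The only cosmetic difference is that the paper phrases the intermediate step probabilistically as $p_t(x)=\mathbb{E}[p_0(X_t)\mid X_0=x]$, while you keep it in integral form.
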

\begin{proof}
Fix $0<t\le 1$.
We need to show that $\tfrac{1}{2}\n{p_t-p_0}_{L^1}\le Ct^{\beta/2}$.
To do so, note that the reversibility of $X$ implies the symmetry of its transition densities. Hence,
\[
p_t(x)=\int p_0(y)q_t(y, x)\diff{y}
		=\int p_0(y)q_t(x, y)\diff{y}
		=\mathbb{E}[p_0(X_t)\mid X_0=x],\qquad x\in D.
  \]
Using the Hölder continuity stated in \eqref{eq:hoelder}, we therefore obtain
\[
		\lvert p_t(x)-p_0(x)\rvert
		=\lvert\mathbb{E}[p_0(X_t)-p_0(X_0)\mid X_0=x]\rvert
		\le c_\beta \mathbb{E}[\lvert X_t-X_0\rvert^\beta\mid X_0=x],\qquad x\in D.
\]
Furthermore, by \citet[Corollary 3.2.9]{davies89}, we have the small time Gaussian heat kernel bound
\[
q_t(x, y)\le C_0\frac{1}{t^{d/2}}\exp\Big(-C_1\frac{|x-y|^2}{t}\Big),
\quad x,y\in D,
\]
where $C_0, C_1\ge0$ are constants depending only on $f, d$ and $D$.
Thus, for $x\in D$,
\[
		\mathbb{E}[\lvert X_t-X_0\rvert^\beta\mid X_0=x]
		=\int_Dq_t(x, y)\lvert x-y\rvert^\beta\diff{y}
		\le C_0\frac{1}{t^{d/2}}\int_D\lvert x-y\rvert^\beta \exp\Big(-C_1\frac{\lvert x-y \rvert^2}{t}\Big)\diff{y}.
\]
Next, note that
	\begin{align*}
		\int_D\lvert x-y\rvert^\beta \exp\Big(-C_1\frac{\lvert x-y\rvert^2}{t}\Big)\diff{y}
		&\le\int_{\R^d}\lvert y\rvert^\beta\exp\Big(-C_1\frac{\lvert y\rvert^2}{t}\Big)\diff{y} \\
		&=\kappa_d\int_{0}^{\infty}r^{\beta + d -1}\exp\Big(-C_1\frac{r^2}{t}\Big)\diff{r} \\
		&=\frac{\kappa_d}{2}t^{\frac{\beta + d}{2}}C_1^{-\frac{\beta+d}{2}}\int_{0}^{\infty}u^{\frac{\beta + d }{2}-1}\mathrm{e}^{-u}\diff{u} \\
		&=C_2t^{\frac{\beta + d}{2}},
	\end{align*}
	where $\kappa_d$ is the surface area of $S^{d-1}$ and $C_2=\frac{\kappa_d}{2}C_1^{-\frac{\beta+d}{2}}\Gamma(\frac{\beta + d}{2})$.
	Finally, since $D$ is bounded, the result follows by setting $C=c_\beta C_0C_2 \mathrm{Leb}(D)/2$.
\end{proof}

The error contribution from starting the backward process uniformly on $D$ is controlled in terms of the spectral gap $\lambda_1$ of $\mathcal{A}$ defined in \eqref{eq:generator}, which can be lower bounded by $\lambda_1 \geq f_{\min}/C_{\text{P}}(D)$, where $C_{\text{P}}(D)$ is the Poincaré constant of the domain $D$.

\begin{lemma} \label{lem:ergodic_err}
It holds that
\[\TV(\PP(X_{\overline{T}} \in \cdot \mid X_0 \sim p_0), \mathcal{U}(\overline{D})) \leq \frac{\sqrt{\operatorname{Leb}(D)}}{2} \lVert p_0 \rVert_{L^2} \mathrm{e}^{-\lambda_1 \overline{T}}, \quad \overline{T} > 0.\]
\end{lemma}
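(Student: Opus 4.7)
The plan is to exploit the spectral decomposition \eqref{eq:trans_dens} of the transition densities together with the fact that the uniform density on $\overline{D}$ is precisely the zeroth spectral mode. Starting from the identity
\[
\TV(\PP(X_{\overline{T}} \in \cdot \mid X_0 \sim p_0), \mathcal{U}(\overline{D})) = \tfrac{1}{2} \lVert p_{\overline{T}} - \operatorname{Leb}(D)^{-1} \rVert_{L^1(D)},
\]
the first step is to represent $p_{\overline{T}}$ via Fubini and the spectral sum, obtaining
\[
p_{\overline{T}}(x) = \int_D p_0(y) q_{\overline{T}}(y,x) \diff y = \sum_{j \geq 0} \mathrm{e}^{-\overline{T} \lambda_j} \langle p_0, e_j \rangle_{L^2} e_j(x).
\]

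Next, I would isolate the $j=0$ term. Since $\lambda_0 = 0$ and $e_0 = \operatorname{Leb}(D)^{-1/2} \one$, the zeroth term equals $\operatorname{Leb}(D)^{-1} \int_D p_0(y) \diff y = \operatorname{Leb}(D)^{-1}$ because $p_0$ is a probability density on $D$. Therefore
\[
p_{\overline{T}}(x) - \operatorname{Leb}(D)^{-1} = \sum_{j \geq 1} \mathrm{e}^{-\overline{T} \lambda_j} \langle p_0, e_j \rangle_{L^2} e_j(x).
\]

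The third step is to pass from $L^1$ to $L^2$ using Cauchy--Schwarz on the bounded domain $D$, giving $\lVert \cdot \rVert_{L^1(D)} \leq \sqrt{\operatorname{Leb}(D)}\, \lVert \cdot \rVert_{L^2(D)}$, and then to evaluate the $L^2$ norm via Parseval in the orthonormal basis $(e_j)_{j \geq 0}$:
\[
\bigl\lVert p_{\overline{T}} - \operatorname{Leb}(D)^{-1} \bigr\rVert_{L^2}^2 = \sum_{j \geq 1} \mathrm{e}^{-2 \overline{T} \lambda_j} \lvert \langle p_0, e_j \rangle_{L^2} \rvert^2 \leq \mathrm{e}^{-2 \lambda_1 \overline{T}} \sum_{j \geq 1} \lvert \langle p_0, e_j \rangle_{L^2} \rvert^2 \leq \mathrm{e}^{-2 \lambda_1 \overline{T}} \lVert p_0 \rVert_{L^2}^2,
\]
where the first inequality uses monotonicity $\lambda_1 \leq \lambda_2 \leq \cdots$ of the eigenvalues. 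Combining the above bounds yields the claim. There is no genuine obstacle here; the only things to keep track of are the identification of $e_0$ with the uniform density and the correct normalisation constant in the Cauchy--Schwarz step.
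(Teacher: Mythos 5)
Your proposal is correct and follows essentially the same route as the paper's proof: Cauchy--Schwarz to pass from total variation ($L^1$) to $L^2$, the spectral expansion of $p_{\overline{T}}$ with the $j=0$ term identified as the uniform density, and Parseval together with the spectral gap $\lambda_1$ to bound the remaining sum by $\lVert p_0 \rVert_{L^2}^2 \mathrm{e}^{-2\lambda_1 \overline{T}}$. The only cosmetic difference is the order in which you invoke Cauchy--Schwarz relative to the spectral decomposition; the substance is identical.
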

\begin{proof} 
Let $t > 0$. 
The Cauchy--Schwarz inequality implies that
\begin{align*} 
\TV(\PP(X_t \in \cdot \mid X_0 \sim p_0), \mathcal{U}(\overline{D})) &= \frac{1}{2} \int_D \big\lvert p_t(x) - \tfrac{1}{\operatorname{Leb}(D)} \big\rvert \diff{x}\\
&\leq \frac{\sqrt{\operatorname{Leb}(D)}}{2} \Big( \int_D \big\lvert p_t(x) - \tfrac{1}{\operatorname{Leb}(D)} \big\rvert^2 \diff{x} \Big)^{1/2}.
\end{align*}
By the spectral decomposition of $q_t(y,x)$, it follows for $a_k \coloneqq \langle p_0,e_k\rangle_{L^2}$ that
\begin{align*} 
\int_D \big\lvert p_t(x) - \tfrac{1}{\operatorname{Leb}(D)}\big\rvert^2 \diff{x} &= \int_D \Big\lvert \sum_{k \geq 1} \mathrm{e}^{-\lambda_k t} e_k(x) a_k \Big\rvert^2 \diff{x} \\ 
&= \sum_{k\geq 1} a_k^2 \mathrm{e}^{-2\lambda_k t}\\ 
&\leq \lVert p_0 \rVert_{L^2}^2 \mathrm{e}^{-2\lambda_1 t},
\end{align*}
where we used $\lambda_0 = 0$, $e_0 \equiv \operatorname{Leb}(D)^{-1/2}$ and $\operatorname{Leb}(D)^{1/2}a_0 = \langle p_0, 1\rangle_{L^2}  = 1$ for the first line and orthonormality of $(e_k)_{k \geq 0}$ for the second one. This yields the claim.
\end{proof}
We now move on to the treatment of score approximation error in the next section.

\subsection{Score matching error}\label{subsec:score_match}
In this section, $\mathcal{S}$ denotes a generic neural network class that shall be exactly calibrated at the end of the section for our score approximation purposes. Recall that the score estimator $\hat{\mathfrak{s}} = \hat{\mathfrak{s}}_n$ is defined according to \eqref{eq:score_est}. 

By Girsanov's theorem, cf.\ Theorem \ref{theo:girsa}, and Pinsker's inequality, the third term in the decomposition \eqref{eq:error_decomp} is controlled by 
{\color{black}
\begin{equation}\label{eq:gen_err}
\begin{split}
&\Big(\frac{1}{2}\E\Big[\int_{\underline{T}}^{\overline{T}} \int_{D} f(x) \lvert \hat{\mathfrak{s}}(x,t) - \nabla \log p_t(x) \rvert^2 p_t(x) \diff{x} \diff{t}  \Big] \Big)^{1/2}\\
&\quad\asymp \Big(\E\Big[\int_{\underline{T}}^{\overline{T}} \int_{D} \lvert \hat{\mathfrak{s}}(x,t) - \nabla \log p_t(x) \rvert^2 p_t(x) \diff{x} \diff{t}  \Big]\Big)^{1/2},
\end{split}
\end{equation}
where we used that $0< f_{\min} \leq f(x) \leq \lVert f \rVert_{\overline{D}} < \infty$ for all $x \in D$.
}
The key to bounding this term is the equivalence between explicit and denoising score matching, i.e.,
\begin{equation}\label{eq:expl_denoise}
\begin{split}
&\int_{\underline{T}}^{\overline{T}}\int_D \lvert \sco(y,t) - \nabla \log p_t(y) \rvert^2 p_t(y) \diff{y} \diff{t} \\
&\quad = \int_{\underline{T}}^{\overline{T}}\int_{D^2} \lvert \sco(y,t) - \nabla_y \log q_t(x,y) \rvert^2 q_t(x,y) p_0(x) \diff{x} \diff{y} \diff{t} + C\\ 
&\quad=\E[L_{\sco}(X_0)] + C, 
\end{split}
\end{equation}
where 
\[C = \int_{\underline{T}}^{\overline{T}}\int_D \lvert \nabla \log p_t(y) \rvert^2 p_t(y) \diff{y} \diff{t} - \int_{\underline{T}}^{\overline{T}}\int_{D^2} \lvert \nabla_y \log q_t(x,y) \rvert^2 q_t(x,y) p_0(x) \diff{x} \diff{y} \diff{t} \leq 0\] 
is a constant that is independent of $\sco$. Note that \eqref{eq:expl_denoise} is valid in our reflected diffusion model by the same arguments as in \cite{vincent11}, see also the proof of Lemma C.3 in \cite{oko23}.

Using \eqref{eq:expl_denoise}, the generalisation loss \eqref{eq:gen_err} can be bounded in terms of the minimal score approximation error over the class $\mathcal{S}$ and the complexity of the induced function class $\mathcal{L} \coloneqq \{L_{\sco} : \sco \in \mathcal{S}\}$ for a desired precision level $\delta$.

\begin{theorem}\label{theo:score_loss_emp}
Suppose that $\sup_{\sco \in \mathcal{S} \cup \{\sco^\circ\}} \lVert L_{\sco} \rVert_{D} \leq C(\mathcal{L}) < \infty$.  Then, for any $\delta > 0$ such that $\mathcal{N}(\mathcal{L},\lVert \cdot \rVert_D, \delta) \geq 3$, it holds that
\begin{equation*} 
\begin{split}
&\E\Big[\int_{\underline{T}}^{\overline{T}} \int_{\overline{D}} \lvert \hat{\mathfrak{s}}(x,t) - \nabla \log p_t(x) \rvert^2 p_t(x) \diff{x} \diff{t}  \Big] \\ 
&\quad \leq 2 \inf_{\sco \in \mathcal{S}} \int_{\underline{T}}^{\overline{T}} \int_{D} \lvert \sco(x,t) - \nabla \log p_t(x) \rvert^2 p_t(x) \diff{x} \diff{t} + 2\frac{C(\mathcal{L})}{n}\Big(\frac{145}{9}\log \mathcal{N}(\mathcal{L}, \lVert \cdot \rVert_{D}, \delta) + 160\Big)\\
&\qquad+ 5\delta.
\end{split}
\end{equation*}
\end{theorem}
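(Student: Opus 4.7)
The proof follows the standard oracle-inequality template for empirical risk minimisation, specialised to exploit a Bernstein-type variance bound intrinsic to the denoising score matching loss. Set $R(\sco) \coloneqq \E[L_\sco(X_0)]$, write $\hat{\E}$ for the empirical mean with respect to $(X_{0,i})_{i \in [n]}$, and let $g_\sco \coloneqq L_\sco - L_{\sco^\circ}$. By \eqref{eq:expl_denoise}, the generalisation error equals the excess risk $R(\hat{\mathfrak{s}}) - R(\sco^\circ) = \E[g_{\hat{\mathfrak{s}}}(X_0)]$. Adding and subtracting $2\hat{\E}[g_{\hat{\mathfrak{s}}}]$ and invoking the ERM property $\hat{\E}[L_{\hat{\mathfrak{s}}}] \leq \hat{\E}[L_{\sco^*}]$ for any $\sco^* \in \mathcal{S}$ gives
\[
R(\hat{\mathfrak{s}}) - R(\sco^\circ) \leq \sup_{\sco \in \mathcal{S}}\bigl\{\E[g_\sco] - 2\hat{\E}[g_\sco]\bigr\} + 2\hat{\E}[g_{\sco^*}].
\]
Taking expectations and infimising over $\sco^*$ produces the target factor-$2$ oracle inequality
\[
\E[R(\hat{\mathfrak{s}}) - R(\sco^\circ)] \leq 2\inf_{\sco\in\mathcal{S}}[R(\sco) - R(\sco^\circ)] + \E\Bigl[\sup_{\sco\in\mathcal{S}}\bigl\{\E[g_\sco] - 2\hat{\E}[g_\sco]\bigr\}\Bigr].
\]

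The principal technical step is a variance bound for $g_\sco$ that upgrades the uniform concentration of the supremum to the fast $\log\mathcal{N}/n$ rate. Expanding the square gives
\[
g_\sco(x) = \int_{\underline{T}}^{\overline{T}}\!\!\int_D q_t(x,y)\bigl(\sco(y,t) - \sco^\circ(y,t)\bigr) \cdot \bigl(\sco(y,t) + \sco^\circ(y,t) - 2\nabla_y\log q_t(x,y)\bigr)\diff{y}\diff{t},
\]
and applying pointwise Cauchy--Schwarz against the measure $q_t(x,y)\diff{y}\diff{t}$ combined with $\lvert a+b\rvert^2 \leq 2\lvert a\rvert^2 + 2\lvert b\rvert^2$ yields the pointwise bound $g_\sco(x)^2 \leq 4C(\mathcal{L})\int\!\int q_t(x,y)\lvert\sco(y,t) - \sco^\circ(y,t)\rvert^2\diff{y}\diff{t}$, using $L_\sco(x) + L_{\sco^\circ}(x) \leq 2C(\mathcal{L})$. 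Integrating this against $p_0$, applying Fubini with $\int p_0(x)q_t(x,y)\diff{x} = p_t(y)$, and invoking \eqref{eq:expl_denoise} once more gives the sought Bernstein variance condition
\[
\operatorname{Var}(g_\sco(X_0)) \leq \E[g_\sco(X_0)^2] \leq 4C(\mathcal{L})\,\E[g_\sco(X_0)].
\]

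Together with the trivial $\lVert g_\sco\rVert_D \leq 2C(\mathcal{L})$, Bernstein's inequality applied to each element $L_{\sco_j}$ of a $\delta$-covering net of $\mathcal{L}$ in $\lVert\cdot\rVert_D$ of cardinality $\mathcal{N} = \mathcal{N}(\mathcal{L},\lVert\cdot\rVert_D,\delta)$ yields, for any $u > 0$, with probability at least $1 - \mathrm{e}^{-u}$,
\[
\E[g_{\sco_j}] - \hat{\E}[g_{\sco_j}] \leq \sqrt{\tfrac{8C(\mathcal{L})\E[g_{\sco_j}]u}{n}} + \tfrac{4C(\mathcal{L})u}{3n}.
\]
The AM--GM inequality $2\sqrt{ab} \leq a+b$ absorbs the $\sqrt{\E[g_{\sco_j}]}$-factor into the $-\E[g_{\sco_j}]$ contribution of $\E[g_{\sco_j}] - 2\hat{\E}[g_{\sco_j}]$, so that after a union bound over the $\mathcal{N}$ net elements and integration of the resulting tail, the expected maximum over the net is bounded by $\tfrac{2C(\mathcal{L})}{n}(\tfrac{145}{9}\log\mathcal{N} + 160)$, with the explicit constants arising from careful numerical bookkeeping through Bernstein's inequality and the AM--GM step. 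Transferring back to the full class $\mathcal{S}$ via the $\delta$-covering, by propagating $\lVert L_\sco - L_{\sco_j}\rVert_D \leq \delta$ through $\E[g_\sco]$ and $2\hat{\E}[g_\sco]$ together with residual Bernstein slack, contributes the additional $5\delta$. The main obstacle in this argument is the pointwise Cauchy--Schwarz step that produces the variance bound $\operatorname{Var}(g_\sco) \leq 4C(\mathcal{L})\E[g_\sco]$: without such a self-bounding variance control, Bernstein's inequality only delivers the slow $1/\sqrt{n}$ concentration, and the claimed $C(\mathcal{L})\log\mathcal{N}/n$ stochastic error becomes unattainable.
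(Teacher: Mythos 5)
Your proposal is correct and follows essentially the same route as the paper, which adapts Theorem C.4 of \cite{oko23} to this setting and establishes the Bernstein condition $\E[(L_\sco(X_0)-L_{\sco^\circ}(X_0))^2]\le 4C(\mathcal{L})\,\E[L_\sco(X_0)-L_{\sco^\circ}(X_0)]$ in Appendix \ref{app:bernstein} via exactly the same pointwise Cauchy--Schwarz argument you use (the paper applies it first in $y$ and then in $t$, but this is equivalent to your single application against $q_t(x,y)\,\diff y\,\diff t$). You also correctly pinpoint the paper's observation that the constant in this self-bounding variance inequality must be $4C(\mathcal{L})$ rather than the $C(\mathcal{L})$ used in \cite{oko23}, which is the source of the adjusted numerical constants in the statement.
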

The proof of this theorem is in principle the same as that of Theorem C.4 in \cite{oko23}, but let us emphasize that larger numeric constants appear in our statement. Apart from a few simple typos in \cite{oko23}, the main reason for this is a small gap in the proof of \citet[Theorem C.4]{oko23}, which has recently been pointed out in \cite{yakovlev25}, and which requires fixing. More precisely, \cite{oko23}  claim that the excess loss satisfies
\[\E\big[(L_\sco(X_0) - L_{\sco^\circ}(X_0))^2\big] \leq C(\mathcal{L})\E[L_\sco(X_0) - L_{\sco^\circ}(X_0)],\]
which is unjustified because $L_\sco(x) - L_{\sco^\circ}(x)$ is not necessarily non-negative pointwise. This is not dramatic however, since we can show that instead
\begin{equation}\label{eq:bernstein_condition}
\E\big[(L_s(X_0) - L_{s^\circ}(X_0))^2\big] \leq 4 C(\mathcal{L})\E[L_s(X_0) - L_{s^\circ}(X_0)],
\end{equation}
holds, i.e., the  bound from \cite{oko23} is true up to a universal multiplicative constant that does not matter in an essential way for the remainder of the proof. 
Using the terminology of \cite{bartlett05}, \eqref{eq:bernstein_condition} shows that the denoising score matching excess loss satisfies a \textit{Bernstein condition}, which for a variety of problems in the empirical risk minimisation literature has been identified as a crucial ingredient  to obtain minimax optimal convergence rates for empirical risk minimisers. For reasons of reproducibility  in other modelling contexts, we prove \eqref{eq:bernstein_condition}  in Appendix \ref{app:bernstein} in a general Markovian framework. 

To deal with the stochastic error in the upper bound, it is essential to control both the uniform loss upper bound $C(\mathcal{L})$ and the covering number $\mathcal{N}(\mathcal{L},\lVert \cdot \rVert_D,\delta)$. The size of the networks from the generic class $\mathcal{S}$ which is required to have a sufficient bound on the approximation error 
\[\inf_{\sco \in \mathcal{S}} \int_{\underline{T}}^{\overline{T}} \int_{D} \lvert \sco(x,t) - \nabla \log p_t(x) \rvert^2 p_t(x) \diff{x} \diff{t}\]
translates directly into covering number bounds on $\mathcal{N}(\mathcal{L}, \lVert \cdot \rVert_{D}, \delta)$, as the following result shows. 

\begin{lemma}\label{lem:cover}
If for any $t > 0$, $\sup_{\sco \in \mathcal{S}} \lVert  \sco(\cdot,t)  \rVert_{D} \leq C(\mathcal{S})(t^{-1/2} \vee 1)$, for some finite constant $C(\mathcal{S})$, then there exists a constant $c$, depending only on $d,f$ and $D$, such that, for any $\delta > 0$,
\[\mathcal{N}(\mathcal{L}, \lVert \cdot \rVert_D, \delta) \leq \mathcal{N}\big(\mathcal{S}, \lVert  \cdot  \rVert_{D \times [\underline{T},\overline{T}]}, \tfrac{\delta}{c C(\mathcal{S})\overline{T} }\big).\]
\end{lemma}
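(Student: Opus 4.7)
The plan is to show that the map $\mathfrak{s} \mapsto L_\mathfrak{s}$ is Lipschitz from $(\mathcal{S}, \lVert \cdot \rVert_{D \times [\underline{T},\overline{T}]})$ into $(\mathcal{L}, \lVert \cdot \rVert_D)$ with Lipschitz constant of order $C(\mathcal{S})\overline{T}$. Once this is established, the image of a minimal $(\delta/(cC(\mathcal{S})\overline{T}))$-cover of $\mathcal{S}$ under $\mathfrak{s} \mapsto L_\mathfrak{s}$ is a $\delta$-cover of $\mathcal{L}$ in $\lVert \cdot \rVert_D$, which is exactly the claimed inequality.

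First, for any $\mathfrak{s}_1, \mathfrak{s}_2 \in \mathcal{S}$, I would apply the polarisation identity $\lvert a\rvert^2 - \lvert b\rvert^2 = \langle a - b, a + b\rangle$ inside the integrand of \eqref{eq:lossfct} to obtain
\[
L_{\mathfrak{s}_1}(x) - L_{\mathfrak{s}_2}(x) = \int_{\underline{T}}^{\overline{T}} \int_D \langle \mathfrak{s}_1(y,t) - \mathfrak{s}_2(y,t),\, \mathfrak{s}_1(y,t) + \mathfrak{s}_2(y,t) - 2\nabla_y \log q_t(x,y) \rangle q_t(x,y) \diff{y} \diff{t}.
\]
Applying the Cauchy--Schwarz inequality pointwise together with the identity $\lvert \nabla_y \log q_t(x,y)\rvert q_t(x,y) = \lvert \nabla_y q_t(x,y)\rvert$, and extracting the supremum norm of $\mathfrak{s}_1 - \mathfrak{s}_2$, yields
\[
\lvert L_{\mathfrak{s}_1}(x) - L_{\mathfrak{s}_2}(x)\rvert \leq \lVert \mathfrak{s}_1 - \mathfrak{s}_2 \rVert_{D \times [\underline{T},\overline{T}]} \int_{\underline{T}}^{\overline{T}} \int_D \bigl[(\lvert \mathfrak{s}_1(y,t)\rvert + \lvert \mathfrak{s}_2(y,t)\rvert) q_t(x,y) + 2\lvert \nabla_y q_t(x,y)\rvert\bigr] \diff{y}\diff{t}.
\]

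Next I would control the remaining space-time integral uniformly in $x \in D$. The hypothesis $\lVert \mathfrak{s}(\cdot,t)\rVert_D \leq C(\mathcal{S})(t^{-1/2} \vee 1)$ for $\mathfrak{s} \in \mathcal{S}$ combined with $\int_D q_t(x,y) \diff{y} = 1$ bounds the first two contributions by $2C(\mathcal{S})(t^{-1/2} \vee 1)$. For the transition density gradient, I would invoke the classical Gaussian gradient heat kernel estimate for conormally reflected diffusions with smooth coefficients on $\overline{D}$: for $t \leq 1$ one has $\lvert \nabla_y q_t(x,y)\rvert \leq C_3 t^{-(d+1)/2} \exp(-C_4 \lvert x - y\rvert^2/t)$ (cf.\ \cite{davies89} and the spectral setting exploited in \cite{nickl23}), while for $t > 1$ the spectral decomposition \eqref{eq:trans_dens} together with $\lambda_1 > 0$ and the eigenfunction bounds recorded in Section \ref{sec:setting} yield exponential-in-$t$ decay. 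Gaussian integration in $y$ in either regime then gives $\int_D \lvert \nabla_y q_t(x,y)\rvert \diff{y} \leq c_0 (t^{-1/2} \vee 1)$, uniformly in $x$, with $c_0$ depending only on $f,d,D$.

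Combining these estimates, integrating in $t$ via $\int_{\underline{T}}^{\overline{T}}(t^{-1/2} \vee 1)\diff{t} \leq 2\sqrt{\overline{T}} + \overline{T} \lesssim \overline{T}$, and assuming without loss of generality that $C(\mathcal{S}) \geq 1$, I arrive at $\lVert L_{\mathfrak{s}_1} - L_{\mathfrak{s}_2}\rVert_D \leq c\, C(\mathcal{S})\, \overline{T}\, \lVert \mathfrak{s}_1 - \mathfrak{s}_2 \rVert_{D \times [\underline{T},\overline{T}]}$ for some $c$ depending only on $d, f, D$, and the covering number estimate is immediate. The main obstacle is pinning down the gradient heat kernel bound uniformly in $x \in D$ and including behaviour near $\partial D$; although classical for Neumann heat kernels of uniformly elliptic operators, the conormal formulation here may call for a short derivation combining the Aronson-type bound on $q_t$ with an interior parabolic gradient estimate propagated to the boundary via the smoothness of $\partial D$ and the coefficients.
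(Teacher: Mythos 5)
Your overall strategy mirrors the paper's: show that $\mathfrak{s} \mapsto L_\mathfrak{s}$ is Lipschitz with constant $\lesssim C(\mathcal{S})\overline{T}$, using the difference-of-squares structure of the loss, Cauchy--Schwarz, and a bound on $\int_D \lvert \nabla_y q_t(x,y)\rvert\,\diff{y}$. The algebraic step differs cosmetically: you apply the polarisation identity $\lvert a\rvert^2 - \lvert b\rvert^2 = \langle a-b, a+b\rangle$ directly to $h_{\mathfrak{s}_1}, h_{\mathfrak{s}_2}$, while the paper factors $\lvert \lvert h_\mathfrak{s}\rvert^2 - \lvert h_{\mathfrak{s}'}\rvert^2 \rvert = \bigl\lvert \lvert h_\mathfrak{s}\rvert - \lvert h_{\mathfrak{s}'}\rvert\bigr\rvert\,\bigl(\lvert h_\mathfrak{s}\rvert + \lvert h_{\mathfrak{s}'}\rvert\bigr)$ and invokes the reverse triangle inequality; both yield the same bound.

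The genuine difference lies in how the gradient heat kernel integral is controlled. You propose a pointwise Gaussian gradient estimate $\lvert\nabla_y q_t(x,y)\rvert \leq C_3 t^{-(d+1)/2}\exp(-C_4\lvert x-y\rvert^2/t)$ for small $t$ and then integrate in $y$, correctly flagging that establishing this uniformly up to the Neumann boundary of $D$ requires work. The paper avoids this issue entirely: it uses $\int_D\lvert\nabla_y\log q_t(x,y)\rvert q_t(x,y)\,\diff{y} = \int_D\lvert\nabla_y q_t(y,x)\rvert\,\diff{y}$ (symmetry of $q_t$) and then invokes \cite[Theorem~6.19]{ouh05}, which provides the required integrated bound $\int_D\lvert\nabla_y q_t(y,x)\rvert\,\diff{y} \leq Ct^{-1/2}\mathrm{e}^{\gamma t}$ directly as an $L^\infty\text{-to-}L^\infty$ operator bound for $\nabla Q_t$, with no need for a pointwise Gaussian gradient estimate. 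For $t\geq 1$, the paper (like you) falls back on the spectral decomposition \eqref{eq:grad_bound}. Your route is sound in spirit and would give the same rate, but the reference you cite (\cite{davies89}) contains Gaussian bounds on $q_t$ itself rather than on $\nabla_y q_t$, so the pointwise gradient bound would need an independent derivation (e.g., via parabolic Schauder or Li--Yau-type estimates adapted to the Neumann setting); the paper's use of Ouhabaz's theorem is the cleaner and self-contained choice here.
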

\begin{proof} 
For $\tilde \delta>0$, let $\sco_1,\ldots,\sco_N\colon \R^d\to\R$ be a $\tilde \delta$-net for $\mathcal{S}$ w.r.t.\ $\lVert  \cdot  \rVert_{D \times [\underline{T},\overline{T}]}$.
Denote $h_{\sco}(x;y,t)\coloneqq \sco(y,t)- \nabla_y \log q_t(x,y)$ for $\sco \in \mathcal{S}$.   Let $\sco \in \mathcal{S}$, and choose $\sco^\prime \in \{\sco_1,\ldots,\sco_N\}$ such that $\lVert  \sco - \sco^\prime  \rVert_{D \times [\underline{T},\overline{T}]} \leq \tilde{\delta}$.
Using definition \eqref{eq:lossfct} and the inequality  $\left\lvert\lvert h_\sco \rvert - \lvert h_{\sco'} \rvert \right\rvert(x;y,t) \leq \lvert \sco-\sco'\rvert(y,t) $, we obtain
\begin{equation}\label{eq:bound_cover}
\begin{split}
		\lvert L_{\sco} - L_{\sco^\prime} \rvert(x)
		&\leq \int_{ \underline{T}}^{\overline{T}}\int_{\overline D}\left\lvert\lvert h_\sco \rvert^2-\lvert h_{\sco'}\rvert^2\right\rvert(x;y,t)q_t(x,y)\diff{y}\diff{t}\\
		&= \int_{ \underline{T}}^{\overline{T}}\int_{\overline D}\left\lvert\lvert h_\sco \rvert -\lvert h_{\sco'} \rvert\right\rvert(x;y,t)\left(\lvert h_\sco \rvert + \lvert h_{\sco'} \rvert\right)(x;y,t)q_t(x,y)\diff{y}\diff{t}\\
    	&\lesssim \lVert  \mathfrak{s} - \mathfrak{s}^\prime  \rVert_{D \times [\underline{T},\overline{T}]} \int_{ \underline{T}}^{\overline{T}}\int_{\overline D}\left(\lvert h_\sco \rvert + \lvert h_{\sco'}\rvert\right)(x;y,t)q_t(x,y)\diff{y}\diff{t}\\
    	&\lesssim \tilde{\delta} \Big(\int_{\underline{T}}^{\overline{T}} \sup_{\mathfrak{s} \in \mathcal{S}}\sup_{z \in D} \lvert \mathfrak{s}(z,t) \rvert \diff{t} + \int_{\underline{T}}^{\overline{T}} \int_D \lvert \nabla_y \log q_t(x,y) \rvert q_t(x,y) \diff{y} \diff{t} \Big) \\
&\lesssim \tilde{\delta}\Big(C(\mathcal{S})\overline{T} + \int_{\underline{T}}^{\overline{T}} \int_D \lvert \nabla_y \log q_t(x,y) \rvert q_t(x,y) \diff{y} \diff{t} \Big).
\end{split}
\end{equation}
Using \citet[Theorem 6.19]{ouh05} 
and symmetry of $q_t$, we obtain for some constants $C,\gamma > 0$ only depending on $d,f$ and $D$,
\begin{equation}\label{eq:log_grad}
\begin{split}
\int_D  \lvert \nabla_y \log q_t(x,y) \rvert q_t(x,y) \diff{y} &= \int_D \lvert \nabla_y q_t(x,y) \rvert \diff{y} \\ 
&= \int_D \lvert \nabla_y q_t(y,x) \rvert \diff{y} \\
&\leq Ct^{-1/2} \mathrm{e}^{\gamma t}.
\end{split}
\end{equation}
This shows that
\begin{equation} \label{eq:bound_cover1}
\int_{\underline{T}}^{1} \int_D  \lvert \nabla_y \log q_t(x,y) \rvert q_t(x,y) \diff{y} \diff{t} \lesssim 1.
\end{equation}
Furthermore, as in the proof of \citet[Proposition 3]{nickl23}, we have 
\begin{equation}\label{eq:grad_bound}
\sup_{(x,y) \in D^2} \lvert \nabla_y q_t(x,y) \rvert \lesssim \sum_{j \geq 1} j^{\tau +1/d}\mathrm{e}^{-ctj^{2/d}}, \quad t > 0,
\end{equation}
for some constants $c > 0, \tau > 1/2$, showing also that
\begin{equation}\label{eq:bound_cover2}
\int_1^{\overline{T}}  \int_D  \lvert \nabla \log q_t(x,y) \rvert q_t(x,y) \diff{y} \diff{t} \lesssim 1.
\end{equation}
Plugging \eqref{eq:bound_cover1} and \eqref{eq:bound_cover2} into \eqref{eq:bound_cover}, it follows that $\{L_{\mathfrak{s}_i}: i \in [N]\}$ is an $\tilde{\delta}cC(\mathcal{S})\overline{T}$-covering of $\mathcal{L}$ w.r.t.\ $\lVert \cdot  \rVert_{D}$, where $c$ is some constant depending only on $f,d$ and $D$. This implies the claimed result.
\end{proof}

For the uniform loss upper bound $C(\mathcal{L})$ we again need to deal with the challenge of not having access to a simple analytic expression for the transition densities $q_t(x,y)$. While upper and lower heat kernel bounds are available for $q_t$ and its gradient under specific assumptions on the domain, these are not sufficient to yield appropriate pointwise estimates for the log-gradient $\nabla_y q_t(x,y)$. Instead, we exploit that for our purposes it suffices to have (time)-integrated bounds. The basic idea is best illustrated for the particular case of a constant diffusivity $f \equiv 1$. Then, the generator of the forward process is just the Neumann Laplacian $\Delta$ on $\overline{D}$ which implies that for any $t > 0$ and $x,y \in D$
\[\Delta_y q_t(x,y) = \partial_t q_t(x,y),\]
because  $q_t(x,y)$ is a symmetric fundamental solution to the Neumann heat equation. Furthermore,  
\[\Delta_y \log q_t(x,y) = \frac{\Delta_y q_t(x,y)}{q_t(x,y)} - \lvert \nabla_y \log q_t(x,y) \rvert^2,\]
which together with the above establishes the fundamental relation 
\[\lvert \nabla_y \log q_t(x,y) \rvert^2 - \partial_t \log q_t(x,y) = - \Delta_y \log q_t(x,y),\]
between spatial and temporal log-gradients. Based on this observation, the famous Li--Yau estimate \citep{liyau} establishes that for $f \equiv 1$ it holds that 
\[\lvert \nabla_y \log q_t(x,y) \rvert^2 - \partial_t \log q_t(x,y) \leq \frac{d}{2t}.\]
This result could be directly used for $f \equiv 1$ to establish the bound in the following lemma, but if $f$ is not constant the situation becomes a bit more tricky. While we may always interpret $\Delta_f = \nabla \cdot f\nabla$ as a weighted Neumann Laplacian on the manifold $\overline{D}$ equipped with a Riemannian metric induced by the Riemannian tensor associated to $f$, corresponding Li--Yau type estimates from the literature \citep{qian95,bakry99} require the validation of certain curvature conditions on $\nabla \cdot f \nabla$, which may be hard to check for specific choices of $f$ and $D$. To circumvent this problem, we follow a more elementary approach, which is however still based on the type of reasoning outlined above.

\begin{lemma}\label{lem:scoreloss_bound}
 Assume that $\underline{T} \leq 1$ and $\overline{T} \geq 1$. If for any $t > 0$, $\sup_{\sco \in \mathcal{S}} \lVert \sco(\cdot,t) \rVert_{D} \leq C(\mathcal{S})(t^{-1/2} \vee 1)$, then 
\[\sup_{\sco \in \mathcal{S} \cup \{\sco^\circ\}} \lVert L_{\sco}\rVert_D \lesssim (C(\mathcal{S})^2 \vee 1) (\lvert\log \underline{T}\rvert + \overline{T}).\]
\end{lemma}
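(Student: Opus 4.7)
My plan is to apply the pointwise inequality $\lvert a-b\rvert^2 \le 2\lvert a\rvert^2 + 2\lvert b\rvert^2$ to decompose
\[L_{\sco}(x) \le 2 A_{\sco}(x) + 2 B(x),\]
where $A_{\sco}(x) \coloneq \int_{\underline{T}}^{\overline{T}} \int_{\overline{D}} \lvert \sco(y,t)\rvert^2 q_t(x,y)\,\diff{y}\,\diff{t}$ and $B(x) \coloneq \int_{\underline{T}}^{\overline{T}} \int_{\overline{D}} \lvert \nabla_y \log q_t(x,y)\rvert^2 q_t(x,y)\,\diff{y}\,\diff{t}$, and then bound $A_\sco$ and $B$ uniformly in $x$.

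For the ``pure gradient'' term $B(x)$, I would carry out the elementary entropy identity alluded to in the preamble. Setting $H(x,t) \coloneq \int_D q_t(x,y) \log q_t(x,y)\,\diff{y}$, the forward equation $\partial_t q_t(x,\cdot) = \nabla \cdot (f\nabla q_t(x,\cdot))$ together with the conormal Neumann boundary condition and integration by parts yield
\[\frac{\diff}{\diff{t}} H(x,t) = -\int_D f(y)\,\lvert \nabla_y \log q_t(x,y)\rvert^2\,q_t(x,y)\,\diff{y},\]
so that $f_{\min} B(x) \le H(x,\underline{T}) - H(x,\overline{T})$. The short-time Gaussian upper bound $q_t(x,y) \lesssim t^{-d/2}$ from Lemma \ref{lem:early_stop} (based on \cite[Corollary 3.2.9]{davies89}), combined with the pointwise decomposition $q_t \log q_t \le q_t \log \lVert q_t(x,\cdot)\rVert_{D}$ on $\{q_{\underline{T}} \ge 1\}$ and $q_t \log q_t \le 0$ elsewhere, gives $H(x,\underline{T}) \lesssim 1 + \lvert \log \underline{T}\rvert$. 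The Gibbs inequality (Lebesgue entropy of any probability density on $\overline{D}$ is bounded below by $-\log \operatorname{Leb}(D)$) yields a uniform lower bound on $H(x,\overline{T})$. Hence $B(x) \lesssim 1 + \lvert \log \underline{T}\rvert$ uniformly in $x$.

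For $A_\sco(x)$ with $\sco \in \mathcal{S}$, the hypothesis $\lVert \sco(\cdot,t)\rVert_{D} \le C(\mathcal{S})(t^{-1/2}\vee 1)$ and $\int_{\overline{D}} q_t(x,y)\,\diff{y} = 1$ immediately yield
\[A_\sco(x) \le C(\mathcal{S})^2 \int_{\underline{T}}^{\overline{T}} (t^{-1} \vee 1)\,\diff{t} \lesssim C(\mathcal{S})^2 (\lvert \log \underline{T}\rvert + \overline{T}).\]
For $\sco = \sco^\circ$, I would verify the analogous pointwise bound $\lVert \sco^\circ(\cdot,t)\rVert_{D} \lesssim t^{-1/2}\vee 1$ by hand. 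Writing $\sco^\circ(\cdot,t) = \nabla p_t/p_t$, the uniform lower bound $p_t \ge \alpha$ (which follows from $p_0 \ge \alpha$, the representation $p_t(y) = \int p_0(x)q_t(x,y)\,\diff{x}$, and the symmetry-based identity $\int q_t(x,y)\,\diff{x} = 1$) reduces the task to bounding $\lVert \nabla p_t\rVert_{D}$. For small times, the same Gaussian gradient heat kernel estimate underlying \eqref{eq:log_grad} gives $\sup_y \int_D \lvert \nabla_y q_t(x,y)\rvert\,\diff{x} \lesssim t^{-1/2}\mathrm{e}^{\gamma t}$ and hence $\lVert \nabla p_t\rVert_{D} \le \lVert p_0\rVert_\infty \sup_y \int_D \lvert \nabla_y q_t(x,y)\rvert\,\diff{x} \lesssim t^{-1/2}$ on $[\underline{T},1]$. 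For $t \ge 1$, the spectral gap together with the spectral decomposition $p_t - \operatorname{Leb}(D)^{-1} = \sum_{j \ge 1}\mathrm{e}^{-t\lambda_j}\langle p_0,e_j\rangle_{L^2} e_j$ yields $\lVert \nabla p_t\rVert_{D} \lesssim \mathrm{e}^{-\lambda_1(t-1)}$, which is bounded uniformly on $[1,\overline{T}]$. Combining, $\lVert \sco^\circ(\cdot,t)\rVert_{D} \lesssim t^{-1/2}\vee 1$, so that $A_{\sco^\circ}(x) \lesssim \lvert \log \underline{T}\rvert + \overline{T}$ follows by the same computation as above.

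Putting the pieces together and taking the supremum over $\sco \in \mathcal{S} \cup \{\sco^\circ\}$ yields the claim, with the prefactor $(C(\mathcal{S})^2 \vee 1)$ absorbing the absolute constants arising in the $\sco = \sco^\circ$ case. The main obstacle is carrying out the entropy identity for $q_t$ in a self-contained manner in the weighted Neumann setting, where Li--Yau-type gradient estimates would require curvature conditions on $\nabla \cdot f\nabla$ that are hard to verify, and then ensuring that the Davies heat kernel upper bound translates into precisely the $\lvert \log \underline{T}\rvert$-growth of $H(x,\underline{T})$ that matches the target order of $B(x)$.
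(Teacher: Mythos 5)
Your proof is correct and arrives at the same conclusion as the paper, but the implementation of the key step is noticeably cleaner in a couple of places. Both proofs decompose $L_{\sco}(x)$ in the same way and handle the $\sco$-contribution identically via the temporal growth bound. For the gradient term $B(x)$, the paper rearranges the identity $\nabla\cdot f\nabla\log q_t = (\nabla\cdot f\nabla q_t)/q_t - f|\nabla\log q_t|^2$, inserts the PDE, and then invokes Itô's formula for the reflected process to arrive at $\E^x[\log q_{\underline T}(x,X_{\underline T})]-\E^x[\log q_{\overline T}(x,X_{\overline T})]$; your entropy computation $\tfrac{d}{dt}H(x,t)=-\int_D f(y)|\nabla_y\log q_t(x,y)|^2q_t(x,y)\,\diff y$ with $H(x,t)=\int_D q_t(x,y)\log q_t(x,y)\diff y$ yields exactly the same quantity purely via integration by parts, avoiding stochastic calculus and the justification that the stochastic integral has zero expectation (which the paper has to argue via a uniform lower heat kernel bound). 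Your lower bound on $H(x,\overline T)$ via the Gibbs inequality $\int q\log q\ge -\log\operatorname{Leb}(D)$ is likewise more elementary than the paper's appeal to the uniform lower heat kernel bound \eqref{eq:lower_uni} from \cite{ito92}, though the latter is used again elsewhere, so this is not a net saving. Finally, you explicitly verify that $\sco^\circ=\nabla\log p_t$ satisfies the growth bound $\lVert\sco^\circ(\cdot,t)\rVert_D\lesssim t^{-1/2}\vee 1$, whereas the paper's hypothesis only covers $\sco\in\mathcal{S}$ and the proof silently applies the bound to $\sco^\circ$ too; your extra verification is a genuine improvement in rigour (and is the route the authors must have had in mind, given the parallel estimate \eqref{eq:uni_der2} in the proof of Proposition~\ref{prop:approx_spectral0} and the constraint $\lVert\mathfrak{s}(\cdot,t)\rVert_D\le C(t^{-1/2}\vee 1)$ built into the network class in Theorem~\ref{theo:main}). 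One small point to tidy: the bound $\lVert\nabla p_t\rVert_D\le\lVert p_0\rVert_\infty\sup_y\int_D|\nabla_yq_t(x,y)|\diff x$ requires the gradient of the kernel integrated over its \emph{first} argument, while \eqref{eq:log_grad} controls the integral over the second; both follow from the pointwise Gaussian gradient estimate in \cite[Theorem~6.19]{ouh05}, but it is worth stating that explicitly rather than invoking \eqref{eq:log_grad} directly.
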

\begin{proof} 
Let $\sco \in \mathcal{S} \cup \{\sco^\circ\}$ and $x \in D$ be arbitrarily chosen. By the assumption on the uniform temporal growth of $\sco$, and using that $\int_D q_t(x,y) \diff{y} = 1$
\begin{align*}
L_{\sco}(x) &\leq 2\Big(\int_{\underline{T}}^{\overline{T}} \int_{D} \big(\lvert \sco(y,t)\rvert^2 + \lvert \nabla_y \log q_t(x,y) \rvert^2\big) q_t(x,y) \diff{y}\Big)\\ 
&\lesssim C(\mathcal{S})^2\int_{\underline{T}}^{\overline{T}}  t^{-1} \vee 1 \diff{t} + \int_{\underline{T}}^{\overline{T}} \int_{D}  \lvert \nabla_y \log q_t(x,y) \rvert^2 q_t(x,y) \diff{y}\\ 
&\leq C(\mathcal{S})^2(\lvert \log \underline{T} \rvert + \overline{T}) + \int_{\underline{T}}^{\overline{T}} \int_{D}  \lvert \nabla_y \log q_t(x,y) \rvert^2 q_t(x,y) \diff{y}. 
\end{align*}
To bound the remaining integral involving the log-gradient of the transition density, we first observe that  $\nabla_y \cdot f \nabla_y \log q_t(x,y)$ satisfies
\begin{align*} 
\nabla_y \cdot f \nabla_y \log q_t(x,y) &= \nabla_y \cdot f(y)\frac{\nabla_y q_t(x,y)}{q_t(x,y)} \\
&= \frac{\nabla_y \cdot f \nabla_y q_t(x,y)}{q_t(x,y)} - f(y) \frac{\lvert \nabla_y q_t(x,y) \rvert^2}{q_t(x,y)^2}. 
\end{align*}
Since $q_t(x,y) = q_t(y,x)$ is a fundamental solution to the elliptic PDE $(\nabla \cdot f \nabla -\partial_t) u(y,t) = 0$ on $D$ with Neumann boundary conditions, we can write 
\begin{align*} 
\int_{D} \int_{\underline{T}}^{\overline{T}} \frac{\nabla_y \cdot f\nabla_y q_t(x,y)}{q_t(x,y)} q_t(x,y) \diff{t} \diff{y} &= -\int_{D} \int_{\underline{T}}^{\overline{T}} \underbrace{\frac{\partial_t q_t(x,y)}{q_t(x,y)}}_{=\partial_t \log q_t(x,y)} q_t(x,y) \diff{y}\diff{t}\\
&=  -\int_D\int_{\underline{T}}^{\overline{T}} \partial_t q_t(x,y) \diff{t} \diff{y} = 0.
\end{align*}
Combining these two observations with $0 < f_{\min} \leq f(y) \leq \lVert f \rVert_{\overline{D}} < \infty$ for all $y \in \overline{D}$, and denoting  the generator by $\Delta_f = \nabla \cdot f \nabla $, we obtain
\begin{align*} 
\int_{\underline{T}}^{\overline{T}} \int_{D} \frac{\lvert \nabla_y q_t(x,y) \rvert^2}{q_t(x,y)^2} q_t(x,y) \diff{y} \diff{t} &\asymp -\int_{\underline{T}}^{\overline{T}}  \int_{D} (\nabla_y \cdot f(y)\nabla_y \log q_t(x,y)) q_t(x,y) \diff{y} \diff{t}\\
&= -\int_{\underline{T}}^{\overline{T}} \int_{D} ((\Delta_f + \partial_t) \log q_t(x,y)) q_t(x,y)\diff{y} \diff{t}\\
&= -\E^x\Big[\int_{\underline{T}}^{\overline{T}} (\Delta_f + \partial_t) \log q_t(x, X_t)   \diff{t}\Big] \\
&= \E^x[\log q_{\underline{T}}(x,X_{\underline{T}})] - \E^x[\log q_{\overline{T}}(x,X_{\overline{T}})].
\end{align*}
Noting that $q_t(x,\cdot)$ satisfies the Neumann boundary condition at $\partial{D}$, the last equality is a consequence of Itô's formula, by which 
\begin{align*}
&\log q_{\overline{T}}(x,X_{\overline{T}}) - \log q_{\underline{T}}(x,X_{\underline{T}})\\
&\,= \int_{\underline{T}}^{\overline{T}} (\partial_t + \Delta_f) \log q_t(x,X_t) \diff{t} + \int_{\underline{T}}^{\overline{T}} \sqrt{2f(X_t)} \langle \nabla \log q_t(x,X_t), \diff{W_t} \rangle\\
&\,\quad + \int_{\underline{T}}^{\overline{T}} \underbrace{\langle \nabla_y \log q_t(x,X_t), \nu(X_t) \rangle}_{=0} \diff{\ell_t},
\end{align*}
where the expectation of the stochastic integral is zero because $(y,t) \mapsto \lvert \sqrt{f(y)}\nabla_y \log q_t(x,y)\rvert$ is bounded on $\overline{D}\times [\underline{T}, \overline{T}]$, which follows from the lower bound 
\begin{equation} \label{eq:lower_uni}
\inf_{t \geq t_0, x,y \in \overline{D}} q_t(x,y) > 0, 
\end{equation}
for any $t_0 > 0$, see, e.g., \citet[p.166]{ito92},
and the space-time smoothness of $(t,x,y) \mapsto q_t(x,y)$ on the compact set $\overline{D}\times [\underline{T}, \overline{T}]$. We finish the proof by noting that for some constant $C$, the upper heat kernel bound 
\[q_t(x,y) \lesssim (t^{-d/2} \vee 1) \exp\Big(-C\frac{\lvert x-y \rvert^2}{t}\Big), \quad t > 0, x,y \in \overline{D},\]
from \citet[Theorem 3.29]{davies89} yields 
\[\sup_{x,y \in \overline{D}} \log q_{\underline{T}}(x,y) \lesssim \frac{d}{2} \log \underline{T}^{-1},\]
and, moreover, \eqref{eq:lower_uni} implies $\inf_{x,y \in \overline{D}} q_{\overline{T}}(x,y) \geq c$, for some constant $c$ not depending on $\underline{T},\overline{T}$ since $\overline{T} \geq 1$.
Putting these bounds together, we conclude that 
\[\E^x[\log q_{\underline{T}}(x,X_{\underline{T}})] - \E^x[\log q_{\overline{T}}(x,X_{\overline{T}})] \lesssim 1 + \log \underline{T}^{-1},\]
and therefore also 
\[\int_{\underline{T}}^{\overline{T}} \int_{\mathcal{X}} \frac{\lvert \nabla_y q_t(x,y) \rvert^2}{q_t(x,y)^2} q_t(x,y) \diff{y} \diff{t} \lesssim 1 + \log \underline{T}^{-1}.\]
\end{proof}

\subsection{Bounding the approximation error}\label{subsec:approx}
Before going into details, we first outline here our general strategy for bounding the approximation error:
\begin{enumerate}
    \item approximate the true score by truncation of the spectral decomposition $h_N$, i.e., approximate $\nabla_x\log p_t(x)$ by $\nabla_x\log h_N(x, t)$;
    \item approximate $h_N$ and $\nabla_xh_N$ by neural networks on $[\underline{T}, \overline{T}]$ via
    \begin{enumerate}[label=(\alph*)]
        \item dividing $[\underline{T},\overline{T}]$ into sub-intervals of increasing length, totalling a number of intervals on the order of $\log N$;
        \item on each sub-interval, fix a number of time-points $\{t_i\}$, also on the order of $\log N$, and at each of these, make an approximation of $h_N(t_i)$ and $\nabla_xh_N(t_i)$ using existing results;
        \item extend these discrete approximations to the entire sub-interval using polynomial interpolation;
        \item combine approximations on each sub-interval into one final approximation via a partition of unity;
    \end{enumerate}
    \item combine the first two steps, along with general results on neural networks, to achieve a neural network approximation of $\nabla_x\log p_t$.
\end{enumerate}
{\color{black}
At this point, we comment on how and why our approximation strategy differs from those in, for example, \cite{oko23}.
The authors there assume a Gaussian transition kernel with a density that is comparatively easy to approximate using neural networks. The difficulty then lies in approximating the initial density $p_0$ and the convolution of the two.
By contrast, in our spectral composition, the influence of $p_0$ enters the function only as the weights $\langle p_0, e_j\rangle_{L^2}$. The growth of these weights encodes the smoothness of $p_0$ and thereby influences the truncation parameter $N$ but they need no approximation by neural networks. Thus, the difficulty lies instead in approximating the eigenfunctions $e_j$ themselves.
One might think that one could use existing results, e.g.\ from \cite{suzuki19,schmidthieber20}, to approximate each $e_j$ and the individual time components $t\mapsto \e^{-t\lambda_j}$ separately, and then sum them together.
However, using these existing results, the sparsity constraint of each summand would be of order at least $N$, and thus the sparsity constraint of the network as a whole would be at least $N^2$.
This is problematic, since, according to \citet[Lemma C.2]{oko23}, the sparsity constraint enters exponentially in the covering number of the associated class and, consequently, by Theorem \ref{theo:score_loss_emp}, linearly in the generalisation error.
This is why we employ a different strategy: we approximate the entire sum at fixed time points and use polynomials to interpolate between them over time. This ultimately gives us a sparsity constraint of order $N\,\mathrm{Poly}(\log N)$.}
Following this general strategy, we can prove the following score approximation result. %{\color{Purple} State result in terms of class $\Phi$ instead of $\tilde{\Phi}$ and in terms of $n$ instead of $N$ for $N = n^{d/(2s+d)}$ and give bound on $\lVert W(n) \rVert_\infty$}

%{\color{red}TODO: write something about why the approximation strategy from \cite{oko23} is not applicable because of the non-Gaussian transition density $q_t$ and why we cannot directly apply the approximation results from \cite{suzuki19, schmidthieber20} to the summands. Something akin to what Asbjørn has put on his slides for the last two talks.}

\begin{theorem}
\label{theorem:approx_nabla_log_h_N}
Let $0<\underline{T}<\overline{T}$ and $n\in\N$ sufficiently large be given with $\underline{T}\in\mathrm{Poly}(n^{-1})$. 
Then, there exists a neural network $\varphi_{\sco}\in\Phi(L(n), W(n), S(n), B(n))$ satisfying
\begin{equation}
\label{eq:err_rate_score_approximation}
\int_{\underline{T}}^{\overline{T}}\int_D
\big|\varphi_{\sco}(x,t)-\nabla_x\log p_t(x)\big|^2p_t(x)\,\mathrm{d}x\,\mathrm{d}t
\lesssim n^{-\frac{2s}{2s+d}}(\log n)^2(\overline{T}+\log(\underline{T}^{-1})).
\end{equation}
The size of the network is evaluated as 
\begin{align*}
L(n)
&\lesssim\log n\log\log n, \\
\n{W(n)}_{\infty}
&\lesssim M n^\frac{d}{2s+d}\log n, \\
S(n)
&\lesssim M n^\frac{d}{2s+d}(\log n)^2,\quad\text{and} \\
B(n)
&\lesssim {\color{black}\frac{\sqrt{n}}{\log n}}\vee\frac{1}{\underline{T}},
\end{align*}
where $M\in O(\lvert\log\tfrac{\overline{T}}{\underline{T}}\rvert)$. 
Furthermore, the network can be chosen such that there exists a constant $C<\infty$ depending only on $p_0$ and $D$ such that $|\varphi_\sco(x, t)|\le \frac{C}{\sqrt{t}}$ for all $t\in[\underline{T}, \overline{T}]$ and $x\in D$.
\end{theorem}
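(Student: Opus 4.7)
The proof follows the three-step programme outlined before the theorem statement.

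\textbf{Step 1: Spectral truncation.} I define
\[h_N(x,t) \coloneq \sum_{j=0}^N \mathrm{e}^{-t\lambda_j} \langle p_0, e_j\rangle_{L^2} e_j(x),\]
and approximate $\nabla_x \log p_t$ by $\nabla_x \log h_N$. The residual $p_t - h_N$ and its gradient are controlled by tail bounds on the Fourier coefficients of $p_0$: since $\tilde{p}_0 \in H^s_c(D)$ satisfies the conormal Neumann condition trivially, one has $|\langle p_0,e_j\rangle_{L^2}| \lesssim j^{-s/d}$ for $j \geq 1$, and combined with the Weyl asymptotics $\lambda_j \asymp j^{2/d}$ and the bounds $\lVert e_j \rVert_\infty \lesssim j^\tau$, $\lVert \nabla e_j\rVert_\infty \lesssim j^{\tau + 1/d}$ from \cite{nickl23}, this gives polynomial-in-$N$ control of $\lVert p_t - h_N\rVert_\infty$ and $\lVert \nabla p_t - \nabla h_N\rVert_\infty$, uniformly on $[\underline{T},\overline{T}]$. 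Since $p_0 \geq \alpha$ yields $p_t \geq \alpha$ by reversibility, taking $N$ large forces $h_N \geq \alpha/2$. The identity
\[\nabla \log p_t - \nabla \log h_N = \frac{\nabla p_t - \nabla h_N}{p_t} + \frac{\nabla h_N\,(h_N - p_t)}{p_t\, h_N}\]
then transports the $L^\infty$ tail bounds into $L^2(p_t\,\mathrm{d}x\,\mathrm{d}t)$, and the choice $N \asymp n^{d/(2s+d)}$ balances the truncation error against the neural network approximation to follow.

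\textbf{Step 2: Spatial approximation at time snapshots.} The smoothing of the heat semigroup combined with $p_0 \in H^s$ yields uniform $H^s$ bounds on $h_N(\cdot,t)$ (and $H^{s-1}$ bounds on $\nabla_x h_N(\cdot,t)$) for $t \in [\underline{T},\overline{T}]$. I partition $[\underline{T},\overline{T}]$ into $M \asymp \log(\overline{T}/\underline{T})$ subintervals of geometrically increasing length, chosen so that on each subinterval the factors $\mathrm{e}^{-t\lambda_j}$ for $j \leq N$ have bounded relative variation; on each subinterval I then fix $O(\log n)$ interpolation nodes $t_{m,k}$. At each node, the Besov/Sobolev ReLU approximation theorem of \cite{suzuki19} produces networks approximating $h_N(\cdot,t_{m,k})$ and $\nabla_x h_N(\cdot,t_{m,k})$ to precision $n^{-s/(2s+d)}$ with the stated depth, width and sparsity.

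\textbf{Step 3: Time interpolation and assembly.} The maps $t \mapsto \mathrm{e}^{-t\lambda_j}$ are analytic, so on each subinterval both $h_N$ and $\nabla_x h_N$ are Lagrange-interpolable in $t$ by polynomials of modest degree; I realise this interpolation as a ReLU network using the standard polynomial approximation primitives, and glue the subinterval pieces together by a ReLU partition of unity in $t$, yielding $\varphi_h \approx h_N$ and $\varphi_{\nabla h} \approx \nabla_x h_N$ on $D \times [\underline{T},\overline{T}]$. The score network is finally defined by $\varphi_\sco \coloneq \varphi_{\nabla h}/\max(\varphi_h, \alpha/2)$, implemented via ReLU approximations of the reciprocal and product, with the maximum realised by $\sigma$-rectification. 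The same quotient identity as in Step 1 converts the combined approximation errors of $\varphi_h$ and $\varphi_{\nabla h}$ into the score bound \eqref{eq:err_rate_score_approximation}, with the factor $\overline{T} + \log(\underline{T}^{-1})$ coming from integrating the pointwise error over $[\underline{T},\overline{T}]$. The uniform $|\varphi_\sco(x,t)| \lesssim t^{-1/2}$ bound is enforced by an additional truncation at threshold $C/\sqrt{t}$, which is consistent with the true score since \eqref{eq:log_grad} already yields the analogous bound for $\nabla \log p_t$.

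\textbf{Main obstacle.} The delicate part is Step 3: the truncation level $N$, the number $M$ of subintervals, the polynomial interpolation degree, the single-time ReLU network sizes of \cite{suzuki19}, and the overhead of implementing the quotient $\varphi_{\nabla h}/\max(\varphi_h, \alpha/2)$ all couple, and one must simultaneously hit the prescribed targets on $L, \lVert W\rVert_\infty, S$ and $B$. In particular, the $B(n) \asymp \underline{T}^{-1}$ contribution reflects the need for weights large enough to resolve the time grid down to $\underline{T}$, while the tight depth $\log n \log\log n$ demands that every building block (multiplication, reciprocal approximation, partition of unity, Lagrange interpolation) be shallow. Carrying out the bookkeeping for these compositions, while preserving the error bound \eqref{eq:err_rate_score_approximation}, is the main technical work.
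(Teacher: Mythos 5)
Your overall architecture mirrors the paper's (spectral truncation $\to$ single-time ReLU approximation via \cite{suzuki19} $\to$ Chebyshev-type time interpolation glued with a ReLU partition of unity $\to$ ReLU quotient with a truncated denominator), but your Step~1 contains a genuine gap that would break the argument under the stated hypothesis $s>d/2$.

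You propose to control the truncation error in $L^\infty$, claiming $\lVert p_t - h_N(\cdot,t)\rVert_\infty$ and $\lVert \nabla p_t - \nabla h_N(\cdot,t)\rVert_\infty$ are polynomially small in $N$, uniformly on $[\underline{T},\overline{T}]$, via $|\langle p_0,e_j\rangle|\lesssim j^{-s/d}$ and $\lVert e_j\rVert_\infty\lesssim j^\tau$, $\tau>1/2$. This leads to a tail sum
\[
\lVert p_t-h_N(\cdot,t)\rVert_\infty \;\lesssim\; \sum_{j>N} \mathrm{e}^{-t\lambda_j}\, j^{\tau-s/d},
\]
and the exponential factor only activates for $j\gtrsim t^{-d/2}$. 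Under the theorem's calibration $\underline{T}\in\mathrm{Poly}(n^{-1})$ with $N\asymp n^{d/(2s+d)}$, the cutoff level $\underline{T}^{-d/2}$ sits far above $N$, so on the operative range the sum is governed by $\sum_{j>N} j^{\tau-s/d}$, which diverges whenever $s/d\le\tau+1$ (recall $\tau$ can only be taken slightly above $1/2$). Thus for $d/2<s\lesssim 3d/2$ the bound does not hold at all, and even for larger $s$ you only get $N^{\tau-s/d+1}$, not the $N^{-s/d}$ needed. The two consequences you draw from it — that $h_N\ge\alpha/2$ for $N$ large, and that the quotient identity transports your bounds into an $N^{-2s/d}$ rate in $L^2(p_t\,\mathrm{d}x\,\mathrm{d}t)$ — therefore both fail.

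The paper circumvents this in two linked ways. First, it never argues in $L^\infty$: Lemma~\ref{lem:bound31} uses orthonormality of $(e_j)$ and the equivalence $\lVert\cdot\rVert_{\bar H^1}\asymp\lVert\cdot\rVert_{H^1}$ to obtain the sharp $L^2$-integrated bound $\int_{\underline T}^{\overline T}\lVert p_t-h_N(t)\rVert_{H^1}^2\,\mathrm{d}t\lesssim N^{-2s/d}$ directly from the $\ell^2$-summability $\sum_j\lambda_j^s\langle p_0,e_j\rangle^2<\infty$, which is strictly stronger than the pointwise coefficient decay you invoke. Second, Proposition~\ref{prop:approx_spectral0} defines the target quantity as $\nabla_x h_N/(h_N\vee\alpha)$ from the outset, so no positive lower bound on $h_N$ is needed; the positivity $p_t\ge\alpha$ suffices, and is used only on the $p_t$ side of the comparison. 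If you replace your $L^\infty$ step with the $L^2$-Parseval argument and treat $h_N\vee\alpha$ as the denominator of the target (rather than trying to prove $h_N$ is bounded below), the remainder of your proposal aligns with the paper's proof.

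Two smaller remarks. Your ``modest polynomial degree'' time interpolation must use Chebyshev nodes (or equivalent) — the paper's $\rho^{-k}$ error via \cite[Theorem 8.2]{trefethen13} and the bound $|c_ip_i(t)|\le1$ are specific to Chebyshev; equispaced nodes would not give the required stability. And the final clipping to enforce $|\varphi_\sco|\lesssim t^{-1/2}$ needs a ReLU realisation of $t\mapsto t^{-1/2}$ with controlled size (Lemma~\ref{lem:cap_network} in the paper); a crude threshold at a constant $C/\sqrt{t}$ is not itself a ReLU network.
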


As alluded to above, the idea is to break up the score approximation error by using the spectral score representation \eqref{eq:score_spectral} and to reduce this to the problem of approximating  $\nabla \log h_N(x,t) = \nabla h_N(x,t)/h_N(x,t)$, where, for $N \in \N$ and $t \in [0,T]$, the truncated series $h_N(t) = h_N(\cdot, t)$ is given by 
\begin{equation}
    \label{eq:hN_def}
    h_N(t) \coloneqq \sum_{j=0}^N \mathrm{e}^{-t\lambda_j} \langle p_0,e_j \rangle_{L^2} e_j.
\end{equation} 

It will then be crucial to choose the cutoff value $N$ of the right order in terms of $n$ to balance the tradeoff between the error incurred through the truncation procedure and the increased approximation quality of $\nabla \log h_N$ in terms of neural networks for smaller $N$. Our analysis will demonstrate that for the desired approximation accuracy $n^{-s/(2s+d)}$, the choice $N \asymp n^{d/(2s+d)}$ is appropriate.

\paragraph{Step 1: Bounding the truncation loss}
We start by considering the approximation properties of $h_N$ and $\nabla_x h_N$. To this end, let us introduce the homogeneous Sobolev space of order $s$ that is induced by the eigendecomposition of $-\nabla \cdot f\nabla$ via
\[\bar{H}^s(D) \coloneqq \big\{\phi \in L^2_0(D): \lVert \phi \rVert_{\bar{H}^s}^2 \coloneqq \sum_{j \geq 1} \lambda_j^s \langle \phi, e_j \rangle^2_{L^2} < \infty \big\}.\]

\begin{lemma}\label{lem:bound31}
It holds that
\[\int_{\underline{T}}^{\overline{T}} \lVert p_t - h_N(t) \rVert_{H^1}^2 \diff{t} \lesssim \lVert p_0 - \tfrac{1}{\operatorname{Leb}(D)}\rVert_{H^s}^2 N^{-2s/d}.\]
\end{lemma}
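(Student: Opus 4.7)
The strategy is to pass to spectral coordinates relative to the orthonormal eigenbasis $(e_j)_{j \geq 0}$ of $-\nabla \cdot f\nabla$, reducing the $L^2_t H^1_x$-norm of the tail $p_t - h_N(t)$ to a weighted $\ell^2$-sum in the Fourier coefficients of $p_0$, and then to cash in Weyl's asymptotics $\lambda_j \asymp j^{2/d}$ to extract the factor $N^{-2s/d}$. Writing $a_j \coloneqq \langle p_0, e_j\rangle_{L^2}$, the spectral representation of $p_t$ combined with the definition \eqref{eq:hN_def} gives
\[
p_t - h_N(t) = \sum_{j > N} \mathrm{e}^{-t\lambda_j} a_j e_j,
\]
which lies in the mean-zero subspace $L^2_0(D)$ for $N \geq 1$, so the $j=0$ mode is irrelevant and the homogeneous norm $\|\cdot\|_{\bar{H}^s}$ is well-defined on the object $p_0 - 1/\operatorname{Leb}(D)$, whose zeroth Fourier coefficient vanishes because $\langle p_0, e_0\rangle_{L^2} = 1/\sqrt{\operatorname{Leb}(D)}$.

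The first step is to control $\|u\|_{H^1}^2$ spectrally for $u = \sum_j c_j e_j$. Using self-adjointness and uniform ellipticity $f \geq f_{\min} > 0$,
\[
\|u\|_{H^1}^2 = \|u\|_{L^2}^2 + \|\nabla u\|_{L^2}^2 \leq \sum_j c_j^2 + f_{\min}^{-1}\langle -\nabla\cdot f\nabla u, u\rangle_{L^2} \lesssim \sum_j (1+\lambda_j)c_j^2.
\]
Applying this to $p_t - h_N(t)$ and swapping sum and time integral by monotone convergence,
\[
\int_{\underline T}^{\overline T}\|p_t - h_N(t)\|_{H^1}^2 \diff t \lesssim \sum_{j > N}(1+\lambda_j)a_j^2\int_{\underline T}^{\overline T}\mathrm{e}^{-2t\lambda_j}\diff t \leq \sum_{j > N}\frac{1+\lambda_j}{2\lambda_j}a_j^2 \lesssim \sum_{j > N}a_j^2,
\]
where the crucial cancellation $\int_0^\infty \mathrm{e}^{-2t\lambda_j} \diff t = 1/(2\lambda_j)$ removes precisely the offending factor of $\lambda_j$ coming from the $H^1$-norm.

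For the final step, Weyl's asymptotics imply $\lambda_j \gtrsim N^{2/d}$ for $j > N$, so that $1 \leq C \lambda_j^s N^{-2s/d}$ for all such $j$. Plugging this into the last bound yields
\[
\sum_{j > N}a_j^2 \lesssim N^{-2s/d}\sum_{j \geq 1}\lambda_j^s a_j^2 = N^{-2s/d}\big\|p_0 - \tfrac{1}{\operatorname{Leb}(D)}\big\|_{\bar{H}^s}^2.
\]
It then remains to replace the homogeneous norm by the inhomogeneous one via the continuous embedding $\|\cdot\|_{\bar{H}^s} \lesssim \|\cdot\|_{H^s}$ on the mean-zero subspace, which follows from standard elliptic regularity for the conormal Neumann problem on the smooth domain $\overline D$ (or equivalently by real interpolation between $L^2(D)$ and the domain of $-\nabla\cdot f\nabla$, using that $\lambda_j \asymp j^{2/d}$). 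Apart from this norm comparison, which is the only genuinely non-algebraic ingredient, the argument reduces to a careful spectral bookkeeping; the main obstacle is making sure the $H^1$-weight $(1+\lambda_j)$ is exactly compensated by the temporal integration, which is what permits the clean $N^{-2s/d}$ rate.
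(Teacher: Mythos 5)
Your proof is correct and follows essentially the same approach as the paper: pass to eigenfunction coordinates, let the time integral $\int_{\underline{T}}^{\overline{T}}\mathrm{e}^{-2\lambda_j t}\,\mathrm{d}t \leq 1/(2\lambda_j)$ absorb the $H^1$ weight $\lambda_j$, and then bound the tail $\sum_{j>N}a_j^2$ against the $\bar{H}^s$ norm via the Weyl asymptotics. The only cosmetic difference is that you derive the upper bound $\lVert u\rVert_{H^1}^2 \lesssim \sum_j(1+\lambda_j)c_j^2$ directly from ellipticity and integration by parts rather than citing the two-sided equivalence $\lVert\cdot\rVert_{\bar{H}^1}\asymp\lVert\cdot\rVert_{H^1}$ used in the paper, while the one-sided comparison $\lVert\cdot\rVert_{\bar{H}^s}\lesssim\lVert\cdot\rVert_{H^s}$ you still invoke is precisely what \cite[Proposition~2]{nickl23} establishes for compactly supported mean-zero functions.
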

\begin{proof} 
Arguing as in \citet[Proposition 3]{nickl23}, we see that $p_t,h_N(t) \in H^k(D)$ for any $k \in \N$. 
Since $p_t-h_N(t) \in L^2_0$, \citet[Proposition 2]{nickl23} shows that $p_t - h_N(t) \in \bar{H}^1(D)$ and $\lVert p_t - h_N(t) \rVert_{\bar{H}^1} \asymp \lVert p_t - h_N(t) \rVert_{H^1}$. Thus, 
\begin{align*} 
\int_{\underline{T}}^{\overline{T}} \lVert p_t - h_N(t) \rVert_{H^1}^2 \diff{t} 
&\asymp \int_{\underline{T}}^{\overline{T}} \lVert p_t - h_N(t) \rVert_{\bar{H}^1}^2 \diff{t} \\ 
&= \sum_{j \geq N+1} \int_{\underline{T}}^{\overline{T}} \lambda_j \mathrm{e}^{-2\lambda_j t} \diff{t} \, \langle p_0, e_j \rangle^2_{L^2} \\
&\leq \sum_{j \geq N+1} \langle p_0, e_j \rangle^2_{L^2}.
\end{align*}
As $p_0 \in H^s_c(D)/\R$, it holds $p_0-\tfrac{1}{\operatorname{Leb}(D)} \in H^s_c(D)\slash \R \cap L^2_0(D)$. 
Furthermore, by \citet[Proposition 2]{nickl23}, we have $H^s_c(D)\slash \R \cap L^2_0 \subset \bar{H}^s(D)$ and $\lVert \phi \rVert_{H^s} \asymp \lVert \phi \rVert_{\bar{H}^s}$ for $\phi \in \bar{H}^s(D)$, implying that $\lVert p_0 - \tfrac{1}{\operatorname{Leb}(D)} \rVert_{H^s} \asymp \lVert p_0 -\tfrac{1}{\operatorname{Leb}(D)}\rVert_{\bar{H}^s}$. 
Using this and $\lambda_j \asymp j^{2/d}$, it follows
\begin{equation}\label{eq:markov_spectral}
\begin{split}
\sum_{j \geq N+1} \langle p_0, e_j \rangle_{L^2}^2 \leq \frac{\sum_{j = 1}^{\infty} \langle p_0,e_j \rangle^2_{L^2} j^{2s/d}}{(N + 1)^{2s/d}} &= \frac{\sum_{j = 1}^{\infty} \langle p_0 - \tfrac{1}{\operatorname{Leb}(D)},e_j \rangle^2_{L^2} j^{2s/d}}{(N + 1)^{2s/d}}\\
&\lesssim \lVert p_0 - \tfrac{1}{\operatorname{Leb}(D)}\rVert_{H^s}^2 N^{-2s/d}.
\end{split}
\end{equation}
\end{proof}

This result allow us to study the approximation quality of (a truncated version of) $\nabla \log h_N$. 
\begin{proposition} \label{prop:approx_spectral0}
It holds that
\[\int_{\underline{T}}^{\overline{T}} \int_{D} \Big\lvert \nabla_x \log p_t(x) - \frac{\nabla_x h_N(x,t)}{h_N(x,t) \vee \alpha} \Big\rvert^2 p_t(x) \diff{x} \diff{t} \lesssim C(p_0)(1 + \lambda_1^{-1})\alpha^{-4} \log(\underline{T}^{-1}) N^{-2s/d},\]
where 
\[C(p_0) = \lVert p_0 \rVert_\infty\lVert p_0 - \tfrac{1}{\operatorname{Leb}(D)} \rVert^2_{H^s} (1+\lVert p_0 \rVert^2_{H^s}).\]
\end{proposition}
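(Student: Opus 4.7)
My plan is to compare the true score $\nabla\log p_t = \nabla p_t/p_t$ with its approximation $\nabla h_N/(h_N\vee\alpha)$ via an algebraic decomposition that isolates numerator and denominator errors, and then to bound each contribution using the spectral representation of $p_t$ and $h_N$. Since $p_0\geq\alpha$ and the Markov semigroup $Q_t$ preserves nonnegativity, $p_t = Q_t\tilde p_0+\alpha\geq\alpha$ pointwise, while $h_N\vee\alpha\geq\alpha$ trivially. The identity
\[
\nabla\log p_t - \frac{\nabla h_N}{h_N\vee\alpha} = \frac{\nabla(p_t - h_N)}{p_t} + \nabla h_N\cdot\frac{(h_N\vee\alpha) - p_t}{p_t(h_N\vee\alpha)},
\]
combined with the elementary estimate $|(h_N\vee\alpha) - p_t|\leq|p_t - h_N|$ (valid because $p_t\geq\alpha$) and the denominator lower bounds, yields
\[
\Big|\nabla\log p_t - \frac{\nabla h_N}{h_N\vee\alpha}\Big|^2 \leq \frac{2\,|\nabla(p_t-h_N)|^2}{\alpha^2} + \frac{2\,|\nabla h_N|^2\,|p_t-h_N|^2}{\alpha^4}.
\]
Multiplying by $p_t\leq\|p_0\|_\infty$ and integrating over $D\times[\underline T,\overline T]$ splits the target quantity into two summands that I treat separately.

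For the first summand I refine the spectral computation of Lemma~\ref{lem:bound31}. Writing $a_j\coloneq\langle p_0,e_j\rangle_{L^2}$, explicit time integration gives $\int_{\underline T}^{\overline T}\|p_t-h_N\|_{H^1}^2\,\diff{t} = \sum_{j>N}a_j^2\int_{\underline T}^{\overline T}(1+\lambda_j)\mathrm{e}^{-2t\lambda_j}\,\diff{t}\leq\tfrac12(1+\lambda_1^{-1})\sum_{j>N}a_j^2$, which combined with the Markov-type tail bound \eqref{eq:markov_spectral} produces the factor $(1+\lambda_1^{-1})N^{-2s/d}\lVert p_0-\operatorname{Leb}(D)^{-1}\rVert_{H^s}^2$ with no $\log$ dependence on $\underline T$.

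The second summand is the main obstacle and requires the uniform-in-$N$ estimate $\|\nabla h_N(\cdot,t)\|_\infty\lesssim t^{-1/2}\|p_0\|_{H^s}$ for small $t$; a direct spectral bound via Sobolev embedding of $\nabla h_N$ either introduces polynomial factors of $t^{-1}$ or loses the rate $N^{-2s/d}$ in the $p_t-h_N$ tail. The key observation is that the eigenprojection $P_N=\sum_{j\leq N}e_j\langle\cdot,e_j\rangle_{L^2}$ commutes with the heat semigroup $Q_t$, so $h_N(\cdot,t) = Q_t(P_Np_0)$, and the heat kernel gradient bound \eqref{eq:log_grad} yields
\[
\|\nabla h_N(\cdot,t)\|_\infty \leq \|P_Np_0\|_\infty\sup_{x\in D}\int_D|\nabla_xq_t(x,y)|\,\diff{y} \lesssim t^{-1/2}\|P_Np_0\|_\infty, \quad t\in[\underline T,1],
\]
while for $t\in[1,\overline T]$ the spectral representation $\nabla h_N(x,t) = \sum_{j=1}^N\mathrm{e}^{-t\lambda_j}a_j\nabla e_j(x)$ gives exponential decay $\|\nabla h_N\|_\infty\lesssim\mathrm{e}^{-(t-1)\lambda_1}\|p_0\|_{H^s}$, so that $\int_{\underline T}^{\overline T}\|\nabla h_N(\cdot,t)\|_\infty^2\,\diff{t}\lesssim(1+\lambda_1^{-1})\log(\underline T^{-1})\|p_0\|_{H^s}^2$. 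For $\|P_Np_0\|_\infty$, Sobolev embedding $H^s\hookrightarrow L^\infty$ for $s>d/2$, together with the spectral contraction $\|P_N\phi\|_{\bar{H}^s}\leq\|\phi\|_{\bar{H}^s}$ and the equivalence $\bar{H}^s\asymp H^s$ modulo constants from \cite[Proposition~2]{nickl23}, yield $\|P_Np_0\|_\infty\lesssim\|p_0\|_{H^s}$. Combining this with the $L^2$ tail bound $\|p_t-h_N\|_{L^2}^2\leq\sum_{j>N}a_j^2\lesssim N^{-2s/d}\lVert p_0-\operatorname{Leb}(D)^{-1}\rVert_{H^s}^2$ and the $L^\infty\times L^2$ Hölder inequality closes the estimate and delivers the $C(p_0)(1+\lambda_1^{-1})\alpha^{-4}\log(\underline T^{-1})N^{-2s/d}$ contribution to the stated bound.
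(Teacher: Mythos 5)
Your proof is correct and follows the same overall three-step strategy as the paper (algebraic decomposition of the score difference, $H^1$-bound on the truncation error, weighted $L^2$ tail bound), but the details differ in two respects worth noting. First, your algebraic decomposition
\[
\nabla\log p_t - \frac{\nabla h_N}{h_N\vee\alpha} = \frac{\nabla(p_t-h_N)}{p_t} + \nabla h_N\cdot\frac{(h_N\vee\alpha)-p_t}{p_t(h_N\vee\alpha)}
\]
places $\nabla h_N$ in the second summand, whereas the paper uses the symmetric identity with $\nabla p_t$ there and $h_N\vee\alpha$ in the first denominator; both are valid and yield the same bound after multiplying by $p_t\le\|p_0\|_\infty$ and dividing by $\alpha^2$ or $\alpha^4$ as appropriate. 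Second, to control $\|\nabla h_N(\cdot,t)\|_\infty$ you route through the commutativity $h_N(\cdot,t)=Q_t(P_Np_0)$ and the integrated heat kernel gradient bound, splitting at $t=1$; the paper instead bounds $\|\nabla p_t\|_\infty$ directly by $\|p_t-\operatorname{Leb}(D)^{-1}\|_{\bar H^{s+1}}$ and the elementary inequality $\lambda_je^{-2\lambda_jt}\le(2t)^{-1}$. Your assertion that a direct spectral bound for $\nabla h_N$ would introduce extraneous polynomial factors is not accurate: the paper's $\bar H^{s+1}$ argument transfers verbatim to $h_N$ since $\sum_{1\le j\le N}\lambda_j^{s+1}e^{-2\lambda_jt}a_j^2\le\tfrac{1}{2t}\sum_{j\ge1}\lambda_j^sa_j^2$, so the heat-kernel detour is optional rather than necessary. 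Minor further remarks: your refined estimate on the first summand with the extra $(1+\lambda_1^{-1})$ factor is harmless but unnecessary (the paper's Lemma~\ref{lem:bound31} uses only the $\bar H^1$ seminorm, for which $\int_0^\infty\lambda_je^{-2\lambda_jt}\,dt=1/2$ suffices), and your bound $\|P_Np_0\|_\infty\lesssim\|p_0\|_{H^s}$ needs the small adjustment of subtracting the constant mode before invoking the $\bar H^s$–$H^s$ equivalence, since that equivalence is stated on $L^2_0$; with that adjustment the constant is absorbed into $C(p_0)$ as required.
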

\begin{proof} 
Symmetry of the transition densities implies that for any $t \geq 0, x \in D$ it holds that
\[p_t(x) = \int_D p_0(y) q_t(y,x) \diff{y} = \int_D p_0(y) q_t(x,y) \diff{y} \leq \lVert p_0 \rVert_\infty \int_D q_t(x,y) \diff{y} = \lVert p_0 \rVert_\infty,\]
and, similarly,
\[p_t(x) \geq \alpha.\]
It follows that
\begin{equation}\label{eq:decomp2}
\begin{split}
&\int_{\underline{T}}^{\overline{T}} \int_{D} \Big\lvert \nabla_x \log p_t(x) - \frac{\nabla_x h_N(x,t)}{h_N(x,t)\vee \alpha} \Big\rvert^2 p_t(x) \diff{x} \diff{t}\\
&\quad\leq \lVert p_0 \rVert_\infty \int_{\underline{T}}^{\overline{T}} \Big\lVert \Big\lvert \nabla_x \log p_t - \frac{\nabla_x h_N(t)}{h_N(t) \vee \alpha} \Big\rvert \Big\rVert^2_{L^2}\diff{t}\\ 
&\quad\leq 2\lVert p_0 \rVert_\infty \Big(\int_{\underline{T}}^{\overline{T}} \Big\lVert \Big\lvert \frac{\nabla_x (p_t - h_N(t))}{h_N(t) \vee \alpha} \Big\rvert \Big\rVert^2_{L^2}\diff{t}+ \int_{\underline{T}}^{\overline{T}} \Big\lVert \Big\lvert \nabla_x p_t\frac{ p_t - (h_N(t) \vee \alpha)}{p_t(h_N(t) \vee \alpha)} \Big\rvert \Big\rVert^2_{L^2}\diff{t} \Big) \\ 
&\quad\leq 2\lVert p_0 \rVert_\infty \Big(\frac{1}{\alpha^2}\int_{\underline{T}}^{\overline{T}} \big\lVert \big\lvert \nabla_x (p_t - h_N(t)) \big\rvert \big\rVert^2_{L^2}\diff{t}+ \frac{1}{\alpha^4}\int_{\underline{T}}^{\overline{T}} \big\lVert \big\lvert \nabla_x p_t (p_t - (h_N(t) \vee \alpha)) \big\rvert \big\rVert^2_{L^2}\diff{t} \Big).
\end{split}
\end{equation}
By Lemma \ref{lem:bound31}, we obtain for the first term
\begin{equation}\label{eq:decomp21}
\begin{split}
\frac{1}{\alpha^2} \int_{\underline{T}}^{\overline{T}} \big\lVert \big\lvert \nabla_x (p_t - h_N(t))\big\rvert \big\rVert^2_{L^2}\diff{t} &\leq  \alpha^{-2} \int_{\underline{T}}^{\overline{T}} \lVert p_t - h_N(t) \rVert_{H^1}^2 \diff{t} \\
&\lesssim \lVert p_0 - \tfrac{1}{\operatorname{Leb}(D)} \rVert^2_{H^s} \alpha^{-2} N^{-2s/d}.
\end{split}
\end{equation}
To bound the second term, note first that 
\begin{align*}
 \lVert p_t - \tfrac{1}{\operatorname{Leb}(D)} \rVert_{\bar{H}^{s+1}}^2 = \sum_{j=1}^\infty \mathrm{e}^{-2\lambda_j t} \lambda_j^{s+1} \langle p_0,e_j \rangle^2_{L^2}&\leq \frac{1}{2t}\sum_{j \geq 1} \lambda_j^s \langle p_0,e_j \rangle^2_{L^2} = \frac{2}{t}\lVert p_0 -\tfrac{1}{\operatorname{Leb}(D)}\rVert_{\bar{H}^s}^2\\ &\asymp \frac{2}{t}\lVert p_0 -\tfrac{1}{\operatorname{Leb}(D)}\rVert_{H^s}^2 < \infty, \quad t > 0.   
\end{align*}
Therefore, for any $i \in [d]$,
\begin{align*}
\lVert \partial_{x_i} p_t \rVert_{H^{s}} = \lVert \partial_{x_i} (p_t - \tfrac{1}{\operatorname{Leb}(D)}) \rVert_{H^{s}} \leq  \lVert p_t - \tfrac{1}{\operatorname{Leb}(D)} \rVert_{H^{s+1}} &\asymp \lVert p_t - \tfrac{1}{\operatorname{Leb}(D)} \rVert_{\bar{H}^{s+1}}\\
&\lesssim \frac{1}{\sqrt{t}}\lVert p_0-\tfrac{1}{\operatorname{Leb}(D)} \rVert_{H^s}. 
\end{align*}
Since $s > d/2$, the Sobolev imbedding theorem yields
\begin{equation}\label{eq:uni_der2}
\sup_{i \in [d]} \lVert \partial_{x_i} p_t \rVert_\infty \lesssim t^{-1/2}\lVert p_0 -\tfrac{1}{\operatorname{Leb}(D)} \rVert_{H^s}.
\end{equation}
Consequently, for $N$ as above, 
\begin{equation}\label{eq:decomp22}
\begin{split}
\int_{\underline{T}}^{\overline{T}} \big\lVert \big\lvert \nabla_x p_t (p_t - (h_N(t) \vee \alpha)) \big\rvert \big\rVert^2_{L^2}\diff{t} &\lesssim \lVert p_0 -\tfrac{1}{\operatorname{Leb}(D)}\rVert_{H^s}^2\int_{ \underline{T}}^{\overline{T}} \frac{1}{t}\lVert p_t - (h_N(t) \vee \alpha) \rVert_{L^2}^2 \diff{t}\\ 
&\leq \lVert p_0 -\tfrac{1}{\operatorname{Leb}(D)}\rVert_{H^s}^2\int_{ \underline{T}}^{\overline{T}} \frac{1}{t}\lVert p_t - h_N(t) \rVert_{L^2}^2 \diff{t}\\ 
&= \lVert p_0 -\tfrac{1}{\operatorname{Leb}(D)}\rVert_{H^s}^2 \int_{ \underline{T}}^{\overline{T}} \frac{1}{t}\mathrm{e}^{-2\lambda_1 t} \diff{t} \sum_{j \geq N+1}\langle p_0, e_j \rangle^2_{L^2}\\
&\lesssim \lVert p_0 - \tfrac{1}{\operatorname{Leb}(D)} \rVert_{H^s}^4 N^{-2s/d}\Big( \int_{\underline{T}}^1 \frac{1}{t} \diff{t} + \int_1^{\overline{T}} \mathrm{e}^{-\lambda_1 t} \diff{t}\Big)\\ 
&\leq \lVert p_0 - \tfrac{1}{\operatorname{Leb}(D)} \rVert_{H^s}^4(1 + 1/\lambda_1) \log(\underline{T}^{-1}) N^{-2s/d},
\end{split}
\end{equation}
where we used \eqref{eq:markov_spectral} and that $\lvert p_t(x) - (h_N(x,t) \vee \alpha) \rvert \leq \lvert p_t(x) - h_N(x,t) \rvert$ since $p_t(x) \geq \alpha$. 
Combining \eqref{eq:decomp2}, \eqref{eq:decomp21} and \eqref{eq:decomp22} yields the assertion.
\end{proof}

\paragraph{Step 2: Approximation of truncated score by neural networks}
Given the previous result, it remains to show that (the truncated version of) $\nabla_x\log h_N$ with $h_N$ as defined in \eqref{eq:hN_def} can be well approximated by a neural network whose size is quantified in terms of $N$. 
To this end, we will make repeated use of the following fundamental observations about compositions and parallelisations of the type of ReLU neural networks introduced in Section \ref{sec:setting} that we are dealing with.

First, we observe that the concatenation of such neural networks is itself simply another neural network.
In particular, if $\varphi_1\in\Phi(L_1, W_1, S_1, B_1)$ and $\varphi_2\in\Phi(L_2, W_2, S_2, B_2)$ are such that $W_{2, L_2+2}=W_{1, 1}$, then $\varphi_1\circ\varphi_2\in\Phi(L_1+L_2+1, W^\mathrm{cat}_{1, 2}, S_1+S_2, B_1\vee B_2)$, where
\[
	W^\mathrm{cat}_{1, 2}
	=\mat{W_{2,1} & W_{2, 2} & \cdots & W_{2, L_2+1} & W_{1, 1} & W_{1, 2} & \cdots & W_{1, L_1+2}}^\top.
\]
In general, if for some $k\in\N$, $\varphi_i\in\Phi(L_i, W_i, S_i, B_i)$ for $i=1,\ldots,k$, then
\[
	\varphi_1\circ\varphi_2\circ\cdots\circ\varphi_k
	\in\Phi\Big(\sum_{i=1}^{k}L_i+k, W^\mathrm{cat}_{[k]}, \sum_{i=1}^{k}S_i, \max_{i\in\{1, \ldots, k\}}B_i\Big),
\]
where $W^\mathrm{cat}_{[k]}$ is defined recursively as above.
Similarly, if $\varphi_1, \varphi_2$ are as before but with $L_1=L_2=L$, we can parallelise the two into one network $\varphi^\mathrm{par}_{1, 2}\in\Phi(L, W^\mathrm{par}_{1, 2}, S_1+S_2, B_1\vee B_2)$ such that $\varphi^\mathrm{par}_{1, 2}(x, y)=\mat{\varphi_1(x) & \varphi_2(y)}^\top$ for $x\in\R^{W_{1, 1}}$ and $y\in\R^{W_{2, 1}}$.
The simplest way to construct this is using block matrices, i.e., by
\[
	\varphi^\mathrm{par}_{1, 2}
	=\mat{A_{1, L} & 0 \\ 0 & A_{2, L}}\sigma_{\mat{b_{1, L} \\ b_{2, L}}}
	\mat{A_{1, L-1} & 0 \\ 0 & A_{2, L-1}}\sigma_{\mat{b_{1, L-1} \\ b_{2, L-1}}}\cdots
	\mat{A_{1, 1} & 0 \\ 0 & A_{2, 1}}\sigma_{\mat{b_{1, 1} \\ b_{2, 1}}}
	\mat{A_{1, 0} & 0 \\ 0 & A_{2, 0}},
\]
which would mean $W^\mathrm{par}_{1, 2}=W_1+W_2$. 
However, if $\varphi_1$ and $\varphi_2$ share some inputs (i.e., if the first, say, $m\in\N$ entries of $x, y$ are the same), then the rightmost matrix in the above may be altered to
\[
	\mat{A_{1, 0} & 0 \\ 0 & A_{2, 0}}\mat{I_m & 0 & 0 \\ 0 & I_{W_{1, 1}-m} & 0 \\ I_m & 0 & 0 \\ 0 & 0 &I_{W_{2, 1}-m}},
\]
whereby $(W^\mathrm{par}_{1, 2})_1=W_{1, 1}+W_{2, 1}-m$ instead.
Again, this can of course naturally be generalised to $k$ networks of equal depth, where we  then have
\[
	\varphi^\mathrm{par}_{[k]}
	\in\Phi\Big(L, W^\mathrm{par}_{[k]}, \sum_{i=1}^{k}S_k, \max_{i\in\{1, \ldots, k\}}B_i\Big),
	\quad (W_{[k]}^\mathrm{par})_j=\sum_{i=1}^{k}W_{i, j}\text{ for }j>1.
\]
Finally, note that multiplying the network $\varphi^{\mathrm{par}}_{[k]}$ with the vector $[1\, \cdots\, 1]$ from the left sums the entries of $\varphi^{\mathrm{par}}_{[k]}$ without changing the size of the network substantially, whence
\[
        \sum_{i=1}^k\varphi_i\in\Phi\Big(L, W^\mathrm{sum}_{[k]}, k+\sum_{i=1}^{k}S_k, 1\vee\max_{i\in\{1, \ldots, k\}}B_i\Big),
	\quad (W_{[k]}^\mathrm{sum})_j=\sum_{i=1}^{k}W_{i, j}\text{ for }1<j<k.
\]
For larger and more complicated neural networks, their exact sizes are often unavailable, and we only have access to their asymptotic sizes.
Due to this, we also introduce the following class of neural networks that eases network size analysis in the proofs that follow,
\[
    \widetilde{\Phi}(\widetilde{L}, \widetilde{W}, \widetilde{S}, \widetilde{B})
    \coloneqq \Big\{\varphi\in\Phi(L, W, S, B): L\lesssim\widetilde{L}, \n{W}_{\infty}\lesssim\widetilde{W}, S\lesssim\widetilde{S}\text{ and }B\lesssim\widetilde{B}\Big\}.
\]
With this notation, we have for arbitrary networks $\varphi_i\in\widetilde{\Phi}(L_i, W_i, S_i, B_i)$, $i=1,2$, that $\varphi_1\circ\varphi_2\in\widetilde{\Phi}(L_1+L_2, W_1\vee W_2, S_1+S_2, B_1\vee B_2)$ and $\varphi_{1, 2}^{\mathrm{par}}\in\widetilde{\Phi}(L_1\vee L_2, W_1+W_2, S_1+S_2, B_1\vee B_2)$.

With this class established, we can begin to approximate $\nabla_x\log h_N$ with a suitable network.
We do this by approaching the problem in smaller pieces, first noting that, for $t\ge0$ and $x\in D$,
\[
	\nabla_x\log h_N(x,t)
	=\frac{\sum_{j=1}^{N}\mathrm{e}^{-t\lambda_j}\langle p_0, e_j\rangle_{L^2}\nabla_xe_j(x)}{\frac{1}{\mathrm{Leb}(D)}+\sum_{j=1}^{N}\mathrm{e}^{-t\lambda_j}\langle p_0, e_j\rangle_{L^2} e_j(x)}.
\]
Thus, in order to approximate $\nabla_x\log h_N$, we need to be able to approximate products and quotients of functions.
Such approximation results already exist in the literature, see, e.g., \cite{oko23, schmidthieber20, telgarsky17, yarotsky17}, but here we give slightly stronger versions with optimised neural network sizes, which lead to improved convergence rates (in terms of $\log$ factors).
More detailed statements and their proofs are given in Appendix \ref{app:neural}, but for our purposes the following will suffice.

\begin{lemma}
	\label{lem:mult_network_asymp}
	For $m\in\N$ and $C\ge1$, there exist neural networks $\varphi_{m}^{\mathrm{mult}}\in\widetilde{\Phi}(m, 1, m, C)$ and $\varphi_{m}^{\mathrm{mult}, d}\in\widetilde{\Phi}(m, d, dm, C)$ satisfying
    \[
        |\varphi^{\mathrm{mult}}_m(x, y)-xy|
        \le C2^{-m},\quad x\in[0, 1],y\in[-C, C],
    \]
    and
    \[
        |\varphi^{\mathrm{mult}, d}_m(x, y)-xy|
        \le\sqrt{d}C2^{-m},\quad x\in[0, 1],y\in[-C, C]^d.
    \]
    These also satisfy $\varphi^{\mathrm{mult}}_m(x, 0)=\varphi^{\mathrm{mult}}_m(0, y)=0$.
\end{lemma}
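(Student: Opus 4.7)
My approach is to lift a Yarotsky-style approximation of the squaring map to a multiplication network via a carefully chosen polarization identity. I would first build a scalar network $\varphi_m^{\mathrm{sq}}$ approximating $x \mapsto x^2$ on $[0,1]$: with the tent map $g(x) = 2\sigma(x) - 4\sigma(x - \tfrac{1}{2}) + 2\sigma(x - 1)$ and $g^{\circ k}$ its $k$-fold self-composition, the network $\varphi_m^{\mathrm{sq}}(x) = x - \sum_{k=1}^{m} 2^{-2k}\, g^{\circ k}(x)$ approximates $x^2$ with uniform error at most $2^{-2m-2}$. Unfolded as a feedforward ReLU network it has depth $\lesssim m$, constant width, and $\lesssim m$ nonzero weights of absolute size $\lesssim 1$. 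The crucial structural fact I would highlight is $\varphi_m^{\mathrm{sq}}(0) = 0$, which holds because every tent map and every ReLU activation vanishes at $0$.

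To obtain $\varphi_m^{\mathrm{mult}}(x,y)$ for $x \in [0,1]$, $y \in [-C,C]$, I would use the halved polarization
\[
uv = 2\bigl[\bigl(\tfrac{u+v}{2}\bigr)^2 - \bigl(\tfrac{u}{2}\bigr)^2 - \bigl(\tfrac{v}{2}\bigr)^2\bigr]
\]
applied to the positive and negative parts $y_{\pm} = \sigma(\pm y) \in [0,C]$ separately, and combined as $\varphi_m^{\mathrm{mult}}(x,y) = C\bigl[\hat\pi(x, y_+/C) - \hat\pi(x, y_-/C)\bigr]$ with $\hat\pi(u,v) = 2\bigl[\varphi_m^{\mathrm{sq}}(\tfrac{u+v}{2}) - \varphi_m^{\mathrm{sq}}(\tfrac{u}{2}) - \varphi_m^{\mathrm{sq}}(\tfrac{v}{2})\bigr]$. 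The halving keeps all squaring arguments in $[0,1]$ and, crucially, preserves the vanishing property: at $v=0$ one has $\hat\pi(u,0) = 2[\varphi_m^{\mathrm{sq}}(u/2) - \varphi_m^{\mathrm{sq}}(u/2) - \varphi_m^{\mathrm{sq}}(0)] = 0$ exactly, and symmetrically at $u=0$. Hence $\varphi_m^{\mathrm{mult}}(x,0) = \varphi_m^{\mathrm{mult}}(0,y) = 0$. Folding the factor $C$ into the terminal linear layer keeps depth $\lesssim m$, width $\lesssim 1$, sparsity $\lesssim m$, and weight bound $\lesssim C$, and propagating the $2^{-2m-2}$ squaring error through the six summands gives total error $\lesssim C\,2^{-2m} \le C\,2^{-m}$.

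For the vector version $\varphi_m^{\mathrm{mult},d}$, I would parallelise $d$ copies of $\varphi_m^{\mathrm{mult}}$ with shared $x$-input using the input-sharing construction recalled just before the lemma, yielding depth $\lesssim m$, width $\lesssim d$, sparsity $\lesssim dm$, and unchanged weight bound. The componentwise errors aggregate in quadrature,
\[
\lvert \varphi_m^{\mathrm{mult},d}(x,y) - xy \rvert = \Bigl(\sum_{i=1}^d \lvert \varphi_m^{\mathrm{mult}}(x,y_i) - xy_i \rvert^2\Bigr)^{1/2} \le \sqrt{d}\,C\,2^{-m},
\]
which is the desired bound.

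The main technical obstacle is keeping the vanishing identities exact under the rescaling needed to fit the squaring domain $[0,1]$. A naive approach that mixes $\varphi_m^{\mathrm{sq}}((u+v)/2)$ with unscaled $\varphi_m^{\mathrm{sq}}(u)$ leaves a residual on the order of $4\varphi_m^{\mathrm{sq}}(u/2) - \varphi_m^{\mathrm{sq}}(u)$ at $v=0$, which is generically nonzero; the symmetric halving above circumvents this by rescaling all three squaring inputs by the same factor, and the positive/negative split of $y$ through ReLUs avoids introducing any shift-based bias. The remaining bookkeeping on depth, width, and sparsity follows the concatenation and parallelisation recipes from the preamble to the lemma and costs only absolute constants.
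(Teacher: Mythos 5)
Your proposal is correct and follows the same high-level recipe as the paper: approximate $xy$ on $[0,1]^2$ by a ReLU network vanishing on the axes, extend to $y\in[-C,C]$ via a positive/negative ReLU split, rescale by $C$, and parallelise for the $d$-dimensional output. The difference is in how the $[0,1]^2$ multiplication network is obtained. The paper invokes \cite[Lemma A.2]{schmidthieber20} as a black box (which already guarantees the vanishing $\overline\varphi^{\mathrm{mult}}_m(x,0)=\overline\varphi^{\mathrm{mult}}_m(0,y)=0$), whereas you reconstruct it from scratch via Yarotsky's tent-map squaring network $\varphi_m^{\mathrm{sq}}$ together with the halved polarization $uv=2[((u+v)/2)^2-(u/2)^2-(v/2)^2]$. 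The halving is the right move: it keeps all three squaring inputs in $[0,1]$ \emph{and} makes the axis-vanishing exact (since $\hat\pi(u,0)=-2\varphi_m^{\mathrm{sq}}(0)=0$), which is the subtle point a naive polarization would destroy, as you correctly flag. The self-contained approach buys transparency at the cost of a little extra arithmetic; the paper's citation buys brevity. One small accounting note: your bound comes out as $\tfrac{3}{2}C\,2^{-2m}$, which is indeed $\le C\,2^{-m}$ already for every $m\ge1$, so no iteration-count inflation is actually needed to turn the ``$\lesssim$'' into a ``$\le$''. The parallelisation and the quadrature aggregation $\bigl(\sum_{i}|\cdot|^2\bigr)^{1/2}\le\sqrt{d}\,C\,2^{-m}$ match the paper exactly.
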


\begin{lemma}
	\label{lem:rec_network_asymp}
For $m,\underline{k},\overline{k}\in\N$, there exists a neural network $\varphi_{m}^{\mathrm{rec}}\in\widetilde{\Phi}( (k+m)\log(k+m), k, (k+m)\log(k+m), 2^k)$, where $k=\underline{k}+\overline{k}$, satisfying
	\[
		|\varphi_{m}^{\mathrm{rec}}(x)-x^{-1}|
		\le2^{-m},\qquad x\in[2^{-\underline{k}}, 2^{\overline{k}}].
	\]
\end{lemma}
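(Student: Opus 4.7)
The plan is to approximate $x^{-1}$ by implementing Newton's iteration for the equation $1/z - x = 0$, namely $z_{n+1} = z_n(2 - xz_n)$. Writing $\epsilon_n = xz_n - 1$, exact arithmetic gives $\epsilon_{n+1} = -\epsilon_n^2$, so starting from an initial guess with $|\epsilon_0| \le 1/2$ it takes $N = \lceil \log_2(k+m+1)\rceil = O(\log(k+m))$ steps to reach $|\epsilon_N| \le 2^{-(k+m)}$, whence $|z_N - x^{-1}| = |\epsilon_N|/x \le 2^{-(k+m)} \cdot 2^{\underline{k}} \le 2^{-m}$ uniformly on $[2^{-\underline{k}}, 2^{\overline{k}}]$. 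Each iteration consists of two scalar multiplications, to be implemented by rescaled copies of $\varphi^{\mathrm{mult}}_{m'}$ from Lemma \ref{lem:mult_network_asymp} with precision $m' \asymp k+m$.

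For the initial guess, I take $z_0$ to be one half of the continuous piecewise linear interpolation of $t \mapsto 1/t$ at the dyadic nodes $\{2^j : j = -\underline{k}, \dots, \overline{k}\}$, realized as a two-layer ReLU subnetwork via tent functions $\hat\iota_j = \tilde\ell_{j-1} - \tilde\ell_j$ with $\tilde\ell_j(x) = \sigma(x/2^j - 1) - \sigma(x/2^j - 2)$. A short direct computation on each dyadic subinterval $[2^j, 2^{j+1}]$ yields $xz_0(x) = u(3-u)/4$ with $u = x/2^j \in [1,2]$, so $xz_0(x) \in [1/2, 9/16]$ and $|\epsilon_0(x)| \le 1/2$ throughout the working range. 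Splitting the dyadic scaling across the two layers (inner weights $1/2^j$ and outer coefficients $2^{-j}/2$) keeps every individual weight bounded by $O(2^k)$; this subnetwork therefore has constant depth, width $O(k)$, sparsity $O(k)$ and norm bound $O(2^k)$.

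For the Newton iterations, the intermediate values satisfy $\hat z_n \in [0, 2^{\underline{k}+1}]$ and $x\hat z_n \in [0, 2]$, so both multiplications per step can be performed via Lemma \ref{lem:mult_network_asymp} after rescaling by a factor of $2^k$; each contributes depth and sparsity $O(m')$, constant width, and introduces absolute error $O(2^k \cdot 2^{-m'})$. Choosing $m' = c(k+m)$ for a sufficiently large absolute constant $c$ keeps the per-step arithmetic noise $\eta_n$ bounded by $2^{-(k+m)}/x$, so that the perturbed recursion $\hat\epsilon_{n+1} = -\hat\epsilon_n^2 + x\eta_n$ still satisfies $|\hat\epsilon_N| \le 2^{-(k+m)}$ and hence $|\hat z_N - x^{-1}| \le 2^{-m}$. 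Composing the initial-guess subnetwork with $N = O(\log(k+m))$ iteration blocks then gives total depth $O((k+m)\log(k+m))$, width $O(k)$, sparsity $O((k+m)\log(k+m))$ and norm $O(2^k)$, as claimed.

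The main technical obstacle is controlling the arithmetic error through the recursion: Newton's iteration is only \emph{quadratically} stable near its fixed point, so noise introduced while $|\epsilon_n|$ is still of constant order could in principle inflate after subsequent squarings. An inductive argument on $|\hat\epsilon_n|$ using the estimate $|\hat\epsilon_{n+1}| \le \hat\epsilon_n^2 + O(2^{2k - m'})$ shows that as long as $m' \gtrsim 2k + m$, the noise term is dominated by $\hat\epsilon_n^2$ at every step where $|\hat\epsilon_n| \ge 2^{-(k+m)/2}$, and thereafter $|\hat\epsilon_n|$ stabilises at $O(2^{-(k+m)})$; combined with $N = O(\log(k+m))$, this justifies the choice $m' \asymp k+m$ and completes the size accounting.
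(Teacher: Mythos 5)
Your proposal is correct and follows the same strategy as the paper: Newton--Raphson iteration $z_{n+1}=z_n(2-xz_n)$ with $O(\log(k+m))$ iterations, each built from two approximate multiplications, seeded by a piecewise-linear initialization of $1/x$ at the dyadic nodes $\{2^j\}_{j=-\underline k}^{\overline k}$. The paper tracks the absolute error $e_n = |\widetilde x_n - x^{-1}|$ via the recursion $e_n = xe_{n-1}^2 + 2^{-(m+1)}$ (proving convexity in $x$ and checking endpoints), while you track the relative error $\hat\epsilon_n = x\hat z_n - 1$ with the recursion $|\hat\epsilon_{n+1}| \le \hat\epsilon_n^2 + O(2^{2k-m'})$; these are equivalent up to factors of $x$ and your two-regime inductive argument (quadratic decay while $|\hat\epsilon_n|\ge 2^{-(k+m)/2}$, then stabilisation at the noise floor) is sound and slightly cleaner. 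One point where your proposal is actually \emph{sharper} than the paper's appendix construction: you realise the dyadic interpolant via tent functions with the scaling split across two layers (inner weights $2^{-j}$, outer coefficients $2^{-j}/2$), so all entries stay $O(2^k)$. The paper's explicit $\varphi^{\mathrm{init}}$ represents each line $\ell_i(x)=\tfrac{3}{p_i}-\tfrac{2}{p_i^2}x$ directly, giving a worst-case entry of $2/p_0^2 = 2\cdot 2^{2\underline k}$, and indeed the detailed lemma in the paper's appendix records $B = 2^{2(\underline k+\overline k)}$ — a factor $2^k$ larger than what the asymptotic statement $B \lesssim 2^k$ requires. Your layer-splitting resolves that small discrepancy.
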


\begin{lemma}
    \label{lem:cap_network}
For each $m\in\N$, there exists a neural network $\varphi^{\mathrm{cap}}\in\widetilde{\Phi}(m\log m, m, m\log m, 2^{m/2})$ satisfying $\varphi^{\mathrm{cap}}(t)\asymp\frac{1}{\sqrt{t}}$ for all $t\in[2^{-m}, 1]$.
\end{lemma}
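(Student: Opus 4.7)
The plan is to decompose $1/\sqrt{t}$ as the composition $(1/\cdot)\circ\sqrt{\cdot}$ and to approximate each factor separately, exploiting that only the coarse $\asymp$ accuracy is required rather than a pointwise error bound.

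First, I would construct a depth-one ReLU network $\psi$ of width $O(m)$ realising the piecewise linear interpolant of $\sqrt{\cdot}$ at the dyadic nodes $\{2^{-k}: k=0,1,\ldots,m\}$, i.e., $\psi(2^{-k})=2^{-k/2}$. Writing it in the form $\psi(t)=1+\sum_{k=0}^{m-1}a_k\sigma(2^{-k}-t)$ with coefficients $a_k$ chosen to match the secant slopes of $\sqrt{\cdot}$, one checks that the steepest secant, on $[2^{-m},2^{-m+1}]$, has magnitude of order $2^{m/2}$, so all weights and biases can be taken of magnitude $\lesssim 2^{m/2}$. By monotonicity and concavity of $\sqrt{\cdot}$, the interpolant satisfies $\psi(t)\asymp\sqrt{t}$ uniformly on $[2^{-m},1]$ and $\psi([2^{-m},1])\subseteq[2^{-m/2},1]$. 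Thus $\psi\in\widetilde{\Phi}(1,m,m,2^{m/2})$.

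Next, I would invoke Lemma \ref{lem:rec_network_asymp} with $\underline{k}=\lceil m/2\rceil$, $\overline{k}=0$ and a fixed precision parameter of constant size (say $m'=3$), obtaining $\varphi^{\mathrm{rec}}\in\widetilde{\Phi}(m\log m,m,m\log m,2^{m/2})$ with $|\varphi^{\mathrm{rec}}(x)-1/x|\le \tfrac{1}{8}$ on $[2^{-m/2},1]$. Setting $\varphi^{\mathrm{cap}}\coloneqq\varphi^{\mathrm{rec}}\circ\psi$ and using the composition rules for $\widetilde{\Phi}$ recalled in the preceding text, the resulting network lies in $\widetilde{\Phi}(m\log m,m,m\log m,2^{m/2})$ as required. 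Since $1/\psi(t)\ge 1$ throughout $[2^{-m},1]$, the absolute $\tfrac{1}{8}$-error of $\varphi^{\mathrm{rec}}$ induces only a bounded multiplicative distortion, so that $\varphi^{\mathrm{cap}}(t)\asymp 1/\psi(t)\asymp 1/\sqrt{t}$ as claimed.

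The main obstacle is meeting the weight bound $B\lesssim 2^{m/2}$: a direct single-layer piecewise-linear approximant of $1/\sqrt{t}$ on the innermost dyadic interval $[2^{-m},2^{-m+1}]$ would require slopes of order $2^{3m/2}$, far exceeding the admissible magnitude. Splitting the task into the shallow slope-$\lesssim 2^{m/2}$ interpolant of $\sqrt{\cdot}$ followed by the depth-$m\log m$ reciprocal network, whose construction in Lemma \ref{lem:rec_network_asymp} is specifically designed to trade weight magnitude for depth, is precisely what allows the four parameters $L,\lVert W\rVert_\infty,S,B$ to be controlled simultaneously.
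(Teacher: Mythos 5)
Your proof is correct and reaches the stated conclusion, but it uses a different construction for the square-root compression step than the paper. Where the paper places nodes equidistantly at $t_i=2^{-m}+i/m$ and realises the concave interpolant of $\sqrt{\cdot}$ as a depth-$\lceil\log_2 m\rceil$ minimum of $m$ affine functions (giving a small weight bound of order $\sqrt{m}$), you place nodes at dyadic scales $2^{-k}$ and realise the interpolant directly as a depth-one sum of $m$ ReLU units, paying for the shallow depth with weights of order $2^{m/2}$. Both variants yield the same asymptotic size $\widetilde\Phi(m\log m,m,m\log m,2^{m/2})$ because the weight budget is in any case dominated by the reciprocal network on $[2^{-m/2},1]$, whose $B\asymp 2^{m/2}$ is unavoidable; the choice of $m'=3$ (vs.\ $m'=1$ in the paper) for the reciprocal precision is immaterial. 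Your dyadic choice has the pleasant feature that the ratio $\psi(t)/\sqrt t$ is scale-invariant across the intervals, making the uniform $\asymp$ claim essentially immediate by a single computation on $[\tfrac12,1]$; the paper's equidistant choice gives smaller intermediate weights but requires the extra $\log m$ depth for the $\min$-tree. One cosmetic remark: the coefficients $a_k$ in $\psi(t)=1+\sum_k a_k\sigma(2^{-k}-t)$ are not the secant slopes themselves but their successive differences $-(s_k-s_{k-1})$; since the slopes $s_j$ increase to $\asymp 2^{m/2}$, the magnitude bound you state is nonetheless correct.
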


With these, we are ready to prove the following key approximation result on $h_N$ and $\nabla_xh_N$.
\begin{lemma}
	\label{lemma:approx_h_N}
Let $0<\underline{T}<\overline{T}$ with $\underline{T}\in\mathrm{Poly}(N^{-1})$ be given, and let $f$ denote either $h_N$ or $\partial_{x_i}h_N$, for some $i\in\{1, \ldots, d\}$ and $N\in\N$ sufficiently large.
Then, there exists a neural network $\varphi_{f}\in\widetilde{\Phi}(L(N), W(N), S(N), B(N))$ satisfying
	\[
		\forall t \in [\underline{T}, \overline{T}]: \n{\varphi_{f}(\,\cdot,t)-f(\,\cdot,t)}_{L^2}^2
		\lesssim \begin{cases}
			N^{-\frac{2s}{d}}(\log N)^2, &\text{ if }f=h_N\\
			\varepsilon(t)N^{-\frac{2s}{d}}(\log N)^2, &\text{ if }f=\partial_{x_i}h_N,
		\end{cases}
	\]
	where
	\[
		\int_{\underline{T}}^{\overline{T}}\varepsilon(t)\,\mathrm{d}t
		\lesssim \overline{T}+\log(\underline{T}^{-1})
	\]
	and whose network size is evaluated as 
	\begin{align*}
		L(N)
		&=\log N\log\log N, \\
		W(N)
		&=MN\log N, \\
		S(N)
		&=MN(\log N)^2,\quad\text{and} \\
		B(N)
		&={\color{black}\frac{N^{\frac{2s+d}{2d}}}{\log N}}\vee\frac{1}{\underline{T}},
	\end{align*}
	where $M\in O(\log\frac{\overline{T}}{\underline{T}})$.
        Furthermore,  there exists a constant $C<\infty$ depending only on $p_0$ and $D$ such that $\sup_{x\in D}|\varphi_{\partial_{x_i}h_N}(x, t)|\le C(1\vee\frac{1}{\sqrt{t}})$ for all $t\in[\underline{T},\overline{T}]$. 
\end{lemma}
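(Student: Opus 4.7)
The plan is a three-stage construction: fix time and approximate spatially; interpolate in time on a geometric partition of $[\underline T, \overline T]$; glue the local pieces with a temporal partition of unity. I treat $f = h_N$ explicitly and indicate where $f = \partial_{x_i} h_N$ introduces an extra $t^{-1/2}$ factor.

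\emph{Stage 1 --- spatial networks at fixed times.} For each $t$, the spectral bound
\[
\|h_N(\cdot, t)\|_{H^s}^2
\asymp \sum_{j \le N} e^{-2t\lambda_j}\, \lambda_j^s\, \langle p_0, e_j\rangle_{L^2}^2
\lesssim \|p_0\|_{H^s}^2
\]
(using the norm equivalence already invoked in Lemma \ref{lem:bound31}) shows that $h_N(\cdot, t)$ lies in the unit ball of $H^s(D) = B^s_{2,2}(D)$ uniformly in $t$. The Suzuki (2019) ReLU approximation theorem then produces, at each fixed $t$, a network of depth $O(\log N \log\log N)$, width $O(N \log N)$, sparsity $O(N(\log N)^2)$ and weights $O(N^{1/d})$ achieving $L^2$ error $O(N^{-s/d}\log N)$. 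For $\partial_{x_i} h_N$ the same template applies, but $\|\partial_{x_i} h_N(\cdot, t)\|_{H^s}^2 \lesssim t^{-1}\|p_0\|_{H^s}^2$ (from the argument around \eqref{eq:uni_der2}), which after rescaling converts to the factor $\varepsilon(t) \asymp 1 \vee t^{-1}$ in the $L^2$ error.

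\emph{Stage 2 --- temporal Lagrange interpolation on a geometric grid.} Partition $[\underline T, \overline T]$ into $M = O(\log(\overline T/\underline T))$ sub-intervals $I_k = [\tau_k, \tau_{k+1}]$ with $\tau_k = 2^k \underline T$. The crucial smoothness estimate in time is
\[
\big\|\partial_t^m h_N(\cdot, t)\big\|_{L^2}^2
= \sum_{j \le N} \lambda_j^{2m}\, e^{-2t\lambda_j}\, \langle p_0, e_j\rangle_{L^2}^2
\lesssim \Big(\frac{m}{e\tau_k}\Big)^{2m} \|p_0\|_{L^2}^2,
\quad t \in I_k,
\]
using $\sup_{\lambda \ge 0}\lambda^{2m} e^{-2\tau_k\lambda} = (m/(e\tau_k))^{2m}$. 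Choosing $m \asymp \log N$ Chebyshev nodes in $I_k$, the classical Lagrange error estimate gives a pointwise $L^2$ interpolation error bounded by $(|I_k| m /\tau_k)^m 2^{-m} \lesssim N^{-s/d}$, the ratio $|I_k|/\tau_k$ being fixed along the geometric grid. I then realise each Lagrange polynomial in $t$ as a ReLU network of depth $O(\log N\log\log N)$ via iterated monomial multiplications using $\varphi^{\mathrm{mult}}_{\cdot}$ from Lemma \ref{lem:mult_network_asymp}, and combine with the Stage 1 spatial networks at the nodes using $\varphi^{\mathrm{mult}, d}_{\cdot}$.

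\emph{Stage 3 --- partition of unity in time and clipping.} Build hat functions $\chi_k \colon [\underline T, \overline T] \to [0,1]$ with $\operatorname{supp}\chi_k \subset I_{k-1} \cup I_k \cup I_{k+1}$ and $\sum_k \chi_k \equiv 1$, each realised by a constant-size ReLU subnetwork, and set
\[
\varphi_f(x, t) = \sum_{k=1}^M \varphi^{\mathrm{mult}, d}\!\bigl(\chi_k(t),\, \psi_k(x, t)\bigr),
\]
where $\psi_k$ is the Stage 2 network on $I_k$. Parallelisation multiplies width and sparsity by $M$; under the assumption $\underline T \in \operatorname{Poly}(N^{-1})$ one has $M \asymp \log N$, matching the stated sizes, and the $\underline T^{-1}$ contribution to $B(N)$ comes from the polynomial weights needed on the shortest sub-interval. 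Integrating the pointwise bound $\|\varphi_f(\cdot, t) - f(\cdot, t)\|_{L^2}^2 \lesssim \varepsilon(t) N^{-2s/d}(\log N)^2$ against $\varepsilon(t) \asymp 1 \vee t^{-1}$ on $[\underline T, \overline T]$ yields $\int \varepsilon(t)\,\mathrm{d}t \lesssim \overline T + \log(\underline T^{-1})$. Finally, the uniform bound $|\varphi_{\partial_{x_i} h_N}(x, t)| \le C(1 \vee t^{-1/2})$ is enforced by composing with a clipping network built from $\varphi^{\mathrm{cap}}$ of Lemma \ref{lem:cap_network}, which preserves the asymptotic size bounds.

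\emph{Main obstacle.} The delicate step is Stage 2: one has to calibrate the interpolation degree $m$, the interval lengths $|I_k|$, and the depth of the monomial multiplications inside the ReLU realisation of the Lagrange polynomial so that (i) each per-interval error stays $\lesssim N^{-s/d}\log N$, (ii) total width stays $\lesssim M N \log N$ without hidden $m^{O(1)}$ factors from the polynomial realisation, and (iii) the $M$ branches together do not inflate depth beyond $O(\log N \log\log N)$. What makes the accounting close with only a poly-logarithmic blow-up is the sharp derivative estimate $(m/\tau_k)^m$, which crucially exploits the spectral cutoff $\lambda_j \le \lambda_N \asymp N^{2/d}$ together with the geometric growth of the $\tau_k$.
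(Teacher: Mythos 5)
Your overall architecture matches the paper's proof closely: spatial approximation via Suzuki (2019) at fixed times, Chebyshev interpolation in time on a dyadic grid of roughly $\log(\overline T/\underline T)$ overlapping sub-intervals, a temporal partition of unity realised by ReLU hat functions, and clipping via $\varphi^{\mathrm{cap}}$ for the $\partial_{x_i}$ case. The one substantive difference is \emph{how} you bound the interpolation error in time. You estimate the $m$-th time derivative via the spectral supremum $\sup_\lambda \lambda^{2m}\mathrm{e}^{-2\tau_k\lambda} = (m/(\mathrm{e}\tau_k))^{2m}$ and then invoke the classical Lagrange remainder; the paper instead observes that $t\mapsto f_m(x,t)$ is entire (a finite exponential sum) and applies the Bernstein-ellipse estimate of \cite[Theorem 8.2]{trefethen13}, picking $\rho=3$ so that the leftmost ellipse point coincides with the origin, which makes $M_{m,\rho}(x)$ controllable by $\|p_0\|_{\bar H^1}$ without derivative counting. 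Both routes exploit the same spectral exponential decay and give the same $O(\log N)$-node, exponential-in-$m$ convergence, so this is a variant rather than a genuinely different argument.

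There is, however, a concrete computational gap in your Stage 2 error bound. You write that the Lagrange error on $I_k$ is $\lesssim (|I_k|m/\tau_k)^m 2^{-m}$, and you also note $|I_k|/\tau_k=1$ on the dyadic grid. But then the bound reads $m^m 2^{-m}$, which \emph{diverges} as $m\to\infty$, and certainly is not $\lesssim N^{-s/d}$ for $m\asymp \log N$. The issue is that the classical Chebyshev remainder on an interval of length $|I_k|$ is $\frac{\|f^{(m)}\|_\infty}{2^{m-1}\,m!}(|I_k|/2)^m$, and you appear to have dropped the $m!$ denominator. Reinstating it, Stirling gives $m^m/(\mathrm{e}^m m!)\asymp m^{-1/2}$, so the $m^m$ from your derivative bound $(m/(\mathrm{e}\tau_k))^m$ is cancelled and the remainder becomes $\approx (|I_k|/\tau_k)^m\, 4^{-m}/\sqrt{m}$, which is $\approx 4^{-m}/\sqrt m$ and indeed $\lesssim N^{-s/d}$ for $m\asymp\log N$. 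So the approach is sound, but the displayed estimate must be corrected by incorporating the $m!$; as written it does not close. A secondary, smaller remark: you state the \emph{per-time-slice} Suzuki network already has depth $O(\log N\log\log N)$ and width $O(N\log N)$, but \cite[Proposition 1]{suzuki19} as used in the paper yields $\widetilde\Phi(\log N, N, N\log N, N^{1/d})$ per slice; the extra $\log\log N$ in depth and $M\log N$ in width only arise later from the polynomial-evaluation subnetworks $\varphi_{p_i}$ and from parallelising across $M$ sub-intervals and $O(\log N)$ Chebyshev nodes, which your write-up glosses over but which does work out.
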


\begin{proof}
	We first construct a network $\varphi_{f}$ with the desired error rate and specify its size at the end. To this end, suppose that there exist neural networks $\varphi_{f}^{(1)},\ldots,\varphi_{f}^{(M)}$, where $M=\lfloor\log_2\frac{\overline{T}}{\underline{T}}\rfloor$ such that
	\[
		\n{\varphi_{f}^{(m)}(\,\cdot,t)-f(\,\cdot,t)}_{L^2}^2\lesssim \begin{cases}
			N^{-\frac{2s}{d}}(\log N)^2, &\text{ if }f=h_N,\\
			\big(\frac{1}{2^{m-1}\underline{T}}\vee 1\big)N^{-\frac{2s}{d}}(\log N)^2, &\text{ if }f=\partial_{x_i}h_N,
		\end{cases}
	\]
	for $m=1,\ldots,M$ and $t\in[2^{m-1}\underline{T},2^{m+1}\underline{T}]$.
	Then, consider the partition of unity $\{\pi_m\}_{m=1}^{M}$ given by
	\[
		\pi_m(t)
		=0\vee\Big(\frac{t-2^{m-1}\underline{T}}{2^{m-1}\underline{T}}\wedge\frac{2^{m+1}\underline{T}-t}{2^{m}\underline{T}}\Big)
		=\begin{cases}
			\frac{t}{2^{m-1}\underline{T}}-1, &\text{if }t\in[2^{m-1}\underline{T}, 2^{m}\underline{T}]\\
			2-\frac{t}{2^m\underline{T}}, &\text{if }t\in[2^m\underline{T},2^{m+1}\underline{T}]\\
			0, &\text{otherwise}
		\end{cases},
	\]
	for $m=2,\ldots,M-1$, while $\pi_1(t)=0\vee\Big(1\wedge\frac{4\underline{T}-t}{2\underline{T}}\Big)$ and $\pi_{M}(t)=0\vee\Big(1\wedge\frac{t-2^{M-1}\underline{T}}{2^{M-1}\underline{T}}\Big)$.
	Since for $a, b\in\R$, $a\vee b=a+\sigma(b-a)$ and $a\wedge b=a-\sigma(a-b)$, each $\pi_m$ is representable as a neural network.
	We then claim that $\varphi_{f}$, defined as
	\[
		\varphi_{f}(x, t)
		=\sum_{m=1}^{M}\varphi_{\ell_1}^{\mathrm{mult}}\big(\pi_m(t),\varphi_{f}^{(m)}(x, t)\big),\qquad\ell_1=\Big\lceil\frac{s}{d}\log_2 N\Big\rceil,
	\]
	yields the desired network.
	Indeed, we first note that since at most two of the $\pi_m$'s are non-zero for any $t\in[\underline{T}, \overline{T}]$ and $\varphi_{\ell}^{\mathrm{mult}}(0, y)=0$ for all $y\in\R$, we have for $m=2,\ldots,M$ and $t\in[2^{m-1}\underline{T}, 2^{m}\underline{T}]$ that
	\begin{align*}
		\n{\varphi_{f}(\,\cdot,t)-f(\,\cdot,t)}_{L^2}
		&=\n{\varphi_{\ell_1}^{\mathrm{mult}}\big(\pi_{m-1}(t), \varphi_{f}^{(m-1)}(\,\cdot, t)\big)+\varphi_{\ell_1}^{\mathrm{mult}}\big(\pi_{m}(t), \varphi_{f}^{(m)}(\,\cdot, t)\big)-f(\,\cdot,t)}_{L^2} \\
		&\lesssim 2^{-\ell_1}+\n{\pi_{m-1}(t)\varphi_{f}^{(m-1)}(\,\cdot,t)+\pi_{m}(t)\varphi_{f}^{(m)}(\,\cdot,t)-f(\,\cdot,t)}_{L^2} \\
		&\lesssim \begin{cases}
			2^{-\ell_1}+N^{-\frac{s}{d}}\log N, &\text{ if }f=h_N\\
			2^{-\ell_1}+\big(\frac{1}{\sqrt{2^{m-2}\underline{T}}}\vee1\big)N^{-\frac{s}{d}}\log N, &\text{ if }f=\partial_{x_i}h_N,
		\end{cases}
	\end{align*}
	where in the last inequality we used
	\begin{align*}
		\n{\pi_{m-1}(t)&\varphi_{f}^{(m-1)}(\,\cdot,t)+\pi_{m}(t)\varphi_{f}^{(m)}(\,\cdot,t)-f(\,\cdot,t)}_{L^2} \\
		&=\n{\pi_{m-1}(t)\big(\varphi_{f}^{(m-1)}(\,\cdot,t)-f(\,\cdot,t)\big)+\pi_{m}(t)\big(\varphi_{f}^{(m)}(\,\cdot,t)-f(\,\cdot,t)\big)}_{L^2} \\
		&\le\pi_{m-1}(t)\n{\varphi_{f}^{(m-1)}(\,\cdot,t)-f(\,\cdot,t)}_{L^2}+\pi_{m}(t)\n{\varphi_{f}^{(m)}(\,\cdot,t)-f(\,\cdot,t)}_{L^2} \\
		&\lesssim \begin{cases}
			N^{-\frac{s}{d}}\log N, &\text{ if }f=h_N\\
			\big(\frac{1}{\sqrt{2^{m-2}\underline{T}}}\vee1\big)N^{-\frac{s}{d}}\log N, &\text{ if }f=\partial_{x_i}h_N,
		\end{cases}.
	\end{align*}
	Setting
	\[
		\varepsilon(t)
		=\sum_{m=1}^{M+1}\Big(\frac{1}{2^{m-2}\underline{T}}\vee 1\Big)\bm{1}_{[2^{m-1}\underline{T}, 2^m\underline{T}]}(t),
	\]
	and by choice of $\ell_1$, squaring both sides of the inequality yields the desired error rate.
	A similar but simpler analysis shows that this also holds for $t\in[\underline{T}, 2\underline{T}]$ and $t\in[2^{M}\underline{T}, 2^{M+1}\underline{T}]$, whence the inequality holds for all $t\in[\underline{T}, 2^{M+1}\underline{T}]\supseteq[\underline{T}, \overline{T}]$.
	Furthermore, we have
	\begin{align*}
		\int_{\underline{T}}^{\overline{T}}\varepsilon(t)\,\mathrm{d}t
		&\le\sum_{m=1}^{M+1}\Big(\frac{1}{2^{m-2}\underline{T}}\vee 1\Big)(2^m\underline{T}-2^{m-1}\underline{T}) \\
		&=\sum_{m=1}^{\lfloor\log_2(\underline{T}^{-1})\rfloor+2}\frac{2^{m-1}\underline{T}}{2^{m-2}\underline{T}}+\sum_{m=\lfloor\log_2(\underline{T}^{-1})\rfloor+3}^{M+1}2^{m-1}\underline{T} \\
		&=2\big(\big\lfloor\log_2(\underline{T}^{-1})\big\rfloor+2\big)+\underline{T}\Big(\sum_{m=0}^{M}2^m-\sum_{m=0}^{\lfloor\log_2(\underline{T}^{-1})\rfloor+2}2^m\Big) \\
		&=2\big(\big\lfloor\log_2(\underline{T}^{-1})\big\rfloor+2\big)+2\underline{T}\Big(2^M-2^{\lfloor\log_2(\underline{T}^{-1})\rfloor}\Big) \\
		&\lesssim\overline{T}+\log(\underline{T}^{-1})
	\end{align*}
	as claimed.
  
	As such, we only need to construct the networks $\varphi_{f}^{(m)}$ for all $m\in\{1,\ldots,M\}$, so let some such $m$ be fixed.
	Then, let $a_m=3\cdot2^{m-2}\underline{T}$ and $b_m=5\cdot2^{m-2}\underline{T}$, and set $f_m(x, t)=f(x, a_m t+b_m)$ such that $f_m(x, [-1, 1])=f(x, [2^{m-1}\underline{T}, 2^{m+1}\underline{T}])$ for all $x\in D$.
	As in the proof of Lemma \ref{lem:bound31}, we see that for each fixed $t\in[\underline{T}, \overline{T}]$, $h_N(\,\cdot,t)\in H^{s+1}(D)$, whence $f(\,\cdot,t)\in H^s(D)=B_{2, 2}^s(D)$.
	Furthermore, $\lVert f(\,\cdot,t)/(1\vee\n{f(\,\cdot,t)}_{B_{2, 2}^s(D)})\rVert_{B_{2, 2}^s(D)} \leq 1$.
	Thus, since $D$ is bounded, a slight modification of \citet[Proposition 1]{suzuki19} using the Sobolev extension theorem yields the existence of a neural network $\widetilde{\varphi}_{f, t}\in\widetilde{\Phi}(\log N, N, N\log N, N^{1/d})$ satisfying $\n{\widetilde{\varphi}_{f, t}-f(\,\cdot,t)/(1\vee\n{f(\,\cdot,t)}_{B_{2, 2}^s(D)})}_{L^2(D)}\lesssim N^{-\frac{s}{d}}$.
	Then, setting $\varphi_{f, t}=(1\vee\n{f(\,\cdot,t)}_{B_{2, 2}^s(D)})\widetilde{\varphi}_{f, t}$, we have
	\begin{align*}
		\n{\varphi_{f, t}-f(\,\cdot,t)}_{L^2(D)}
		&=(1\vee\n{f(\,\cdot,t)}_{B_{2, 2}^s(D)})\n[\Big]{\widetilde{\varphi}_{f, t}-\frac{f(\,\cdot,t)}{(1\vee\n{f(\,\cdot,t)}_{B_{2, 2}^s(D)})}}_{L^2(D)}\\
		&\lesssim(1\vee\n{f(\,\cdot,t)}_{B_{2, 2}^s(D)})N^{-\frac{s}{d}}.
	\end{align*}
	Noting that
	\[
		\n{f(\,\cdot,t)}_{B_{2, 2}^s(D)}
		\asymp\n{f(\,\cdot,t)}_{H^s(D)}
		\lesssim \begin{cases}
			\n{p_0}_{H^s}, &\text{ if }f=h_N,\\
			\frac{1}{\sqrt{t}}\n{p_0}_{H^s}, &\text{ if }f=\partial_{x_i}h_N,
		\end{cases}
	\]
	we thus have that $\varphi_{f, t}\in\widetilde{\Phi}(\log N, N, N\log N, N^{1/d}\vee\frac{1}{\sqrt{\underline{T}}})$, and
	\[
		\n{\varphi_{f, t}-f(\,\cdot,t)}_{L^2(D)}
		\lesssim \begin{cases}
			N^{-\frac{s}{d}}, &\text{ if }f=h_N,\\
			\big(\frac{1}{\sqrt{t}}\vee1\big)N^{-\frac{s}{d}}, &\text{ if }f=\partial_{x_i}h_N.
		\end{cases}
	\]
	Then, for each $t\in[-1, 1]$, let $\varphi_{f_m, t}=\varphi_{f, a_mt+b_m}$ such that $\varphi_{f_m, t}$ is an approximation of $f_m(t, \cdot)$.
	We will then approximate $f_m$ by polynomial interpolation in time and by $\varphi_{f_m, t}$ in space.
	There are then three main sources of error: the error in approximating a polynomial with a neural network, the error from polynomial interpolation, and finally the error from approximating $f_m(\,\cdot,t)$ by $\varphi_{f_m, t}$.
	To separate these sources of error, we now let $\{t_i\}_{i=0}^k$ be the first $k+1$ Chebyshev nodes on $[-1, 1]$ for some $k$ to be determined later, i.e. $t_i=\cos\frac{i\pi}{k}$.
	Then, for $i=0,\ldots,k$, let $p_i(t)=\prod_{j\neq i}(t-t_j)$ and set $c_i=\frac{1}{p_i(t_i)}$.
	Furthermore, set
	\begin{align*}
		\varphi_{f_m}(x, t)
		&=\sum_{i=0}^{k}c_i\varphi_{\ell_2}^{\mathrm{mult}}\big(\varphi_{p_i}(t), \varphi_{f_m, t_i}(x)\big), \\
		\psi_{m}(x, t)
		&=\sum_{i=0}^{k}c_ip_i(t)\varphi_{f_m, t_i}(x), \\
		P_m(x, t)
		&=\sum_{i=0}^{k}c_ip_i(t)f_m(x, t_i),
	\end{align*}
	where $\varphi_{p_i}$ is a neural network approximation of $p_i$ satisfying $|\varphi_{p_i}(t)-p_i(t)|\lesssim k2^{-\ell_3}$ to be constructed later.
	We then have that
	\begin{equation}
		\label{eq:fm_error}
        \begin{split}
		\n{\varphi_{f_m}(\,\cdot,t)-f_m(\,\cdot,t)}_{L^2}
		&\le\n{\varphi_{f_m}(\,\cdot,t)-\psi_m(\,\cdot,t)}_{L^2}
		+\n{\psi_m(\,\cdot,t)-P_m(\,\cdot,t)}_{L^2}\\
        &\quad +\n{P_m(\,\cdot,t)-f_m(\,\cdot,t)}_{L^2}.
        \end{split}
	\end{equation}
	By \citet[Theorem 5.2]{trefethen13}, it holds that $|c_i|\le \frac{2^{k-1}}{k}$, and so the first term of \eqref{eq:fm_error} is upper bounded by
	\begin{align*}
		\n{\varphi_{f_m}(\,\cdot,t)-\psi_m(\,\cdot,t)}_{L^2}
		&\le\sum_{i=0}^{k}|c_i|\n{\varphi_{\ell_2}^{\mathrm{mult}}\big(\varphi_{p_i}(t), \varphi_{f_m, t_i}\big)-p_i(t)\varphi_{f_m, t_i}}_{L^2} \\
		&\lesssim\sum_{i=0}^{k}|c_i|(2^{-\ell_2}+k2^{-\ell_3}\n{\varphi_{f_m, t_i}}_{L^2}) \\
		&\lesssim\sum_{i=0}^{k{}}|c_i|(2^{-\ell_2}+k2^{-\ell_3}(\underline{T}^{-\frac{1}{2}}N^{-\frac{s}{d}}+\n{f_m(\,\cdot,t_i)}_{L^2})) \\
		&\le 2^{k-1}(2^{-\ell_2}+k2^{-\ell_3}(\underline{T}^{-\frac{1}{2}}N^{-\frac{s}{d}}+\n{p_0}_{H^1})),
	\end{align*}
	and choosing $\ell_2=\lceil\frac{s}{d}\log_2 N+k\rceil$ and $\ell_3=\ell_2+\lceil\log_2 (k+\underline{T}^{-\frac{1}{2}     })\rceil$ bounds this term by $N^{-s/d}$.
	For the second term of \eqref{eq:fm_error}, it is a well-known property of Chebyshev nodes that $|p_i(t)c_i|{\color{black}\le 2}$, whence
	\begin{align*}
		\n{\psi_m(\,\cdot,t)-P_m(\,\cdot,t)}_{L^2}
		&\le\sum_{i=0}^{k}|c_ip_i(t)|\n{\varphi_{f_m, t_i}-f_m(\,\cdot,t_i)}_{L^2}\\
		&\lesssim \begin{cases}
			kN^{-\frac{s}{d}}, &\text{ if }f=h_N\\
			k\big(\frac{1}{\sqrt{2^{m-1}\underline{T}}}\vee1\big)N^{-\frac{s}{d}}, &\text{ if }f=\partial_{x_i}h_N,
		\end{cases}
	\end{align*}
	where the last inequality in the case of $f=\partial_{x_i}h_N$ follows from the fact that for all $t\in[-1, 1]$
	\begin{align*}
		\n{\varphi_{f_m, t}-f_m(\,\cdot, t)}_{L^2}
		&=\n{\varphi_{f,a_mt+b_m}-f(\,\cdot, a_mt+b_m)}_{L^2} \\
		&\lesssim\Big(\frac{1}{\sqrt{a_mt+b_m}}\vee1\Big)N^{-\frac{s}{d}} \\
		&\le\Big(\frac{1}{\sqrt{2^{m-1}\underline{T}}}\vee1\Big)N^{-\frac{s}{d}}.
	\end{align*}
	Finally, for the third term of \eqref{eq:fm_error}, we note that for each $x\in D$, the function $t\mapsto f_m(x, t)$ is entire on $\C$ as an affine combination of exponentials.
	It then follows from \citet[Theorem 8.2]{trefethen13} that, for $\rho>1$,
	\[
		|f_m(x, t)-P_m(x, t)|
		\le\frac{4M_{m, \rho}(x)\rho^{-k}}{\rho-1},\qquad t\in[-1, 1],
	\]
	where
	\[
		M_{m, \rho}(x)
		=\max_{z\in \partial E_\rho}|f_m(x, z)|,\quad\text{and}\quad
		\partial E_\rho
		=\left\{\frac{z+z^{-1}}{2}\mid |z|=\rho\right\}.
	\]
	For $z\in\partial E_\rho$, we have, letting $\widetilde{e}_n$ denote either $e_n$ or $\partial_{x_i}e_n$, depending on $f$,
	\begin{align*}
		f_m(x, z)
		&=f(x, a_mz+b_m)
		=\sum_{n=0}^{N}\mathrm{e}^{-\lambda_n(a_m\Re{z}+b_m)}\langle p_0, e_n\rangle \widetilde{e}_n(x)\mathrm{e}^{-\mathrm{i}\lambda_n a_m\Im z}
		=\langle \bm{r}(x, \Re z), \bm{\theta}(\Im z)\rangle,
	\end{align*}
	where $(\bm{r}(x,  y))_{n}=\mathrm{e}^{-\lambda_n(a_my+b_m)}\langle p_0, e_n\rangle \widetilde{e}_n(x)$ and $\bm{\theta}(y)_{n}=\mathrm{e}^{-\mathrm{i}\lambda_n a_m y}$, so $\lvert \bm{\theta}(y)\rvert = N+1$.
	Thus, by Cauchy--Schwarz inequality,
	\begin{align*}
		M_{m, \rho}(x)
		&\le \bigg((N+1)\max_{z\in\partial E_\rho}\sum_{n=0}^{N}\mathrm{e}^{-2\lambda_n(a_m\Re{z}+b_m)}\langle p_0, e_n\rangle^2\widetilde{e}_n(x)^2\bigg)^{1/2} \\
		&=\bigg((N+1)\sum_{n=0}^{N}\mathrm{e}^{-2\lambda_n(a_m(\frac{-\rho-\rho^{-1}}{2})+b_m)}\langle p_0, e_n\rangle^2\widetilde{e}_n(x)^2\bigg)^{1/2}.
	\end{align*}
	Consequently,
	\begin{align*}
		\n{M_{m,\rho}}_{L^2}^2
		&\le(N+1)\sum_{n=0}^{N}\mathrm{e}^{-2\lambda_n(a_m(\frac{-\rho-\rho^{-1}}{2})+b_m)}\langle p_0, e_n\rangle^2\n{\widetilde{e}_n}_{L^2}^2 \\
            &\le(N+1)\sum_{n=0}^{N}\mathrm{e}^{-2\lambda_n(a_m(\frac{-\rho-\rho^{-1}}{2})+b_m)}\langle p_0, e_n\rangle^2\n{e_n}_{H^1}^2 \\
		&\lesssim(N+1)\sum_{n=0}^{N}\mathrm{e}^{-2\lambda_n(a_m(\frac{-\rho-\rho^{-1}}{2})+b_m)}\lambda_n\langle p_0, e_n\rangle^2,
	\end{align*}
	and if $a_m(\frac{-\rho-\rho^{-1}}{2})+b_m=0$, the right hand side is bounded by $(N+1)\n{p_0}_{\bar{H}^1}^2$.
	This is exactly the case when $\rho=3$, in which case we have
	\[
		\n{f_m(\,\cdot,t)-P_m(\,\cdot,t)}_{L^2}^2
		\lesssim N3^{-2k},
	\]
	and setting $k=\lceil(\frac{s}{d}+\frac{1}{2})\log N\rceil$ yields an approximation $\varphi_{f_m}$ of $f_m$ with the correct error rate.
	Setting $\varphi_{f}^{(m)}(x, t)=\varphi_{f_m}(\frac{t-b_m}{a_m}, x)$ for $t\in[2^{m-2}\underline{T}, 2^{m+2}\underline{T}]$ then gives the desired network.
	We thus only need to construct the network $\varphi_{p_i}$ as detailed above.
	To this end, note that the neural network
	\[
		\varphi_{p_i}^{(0)}\colon
		t\mapsto\mat{I_{k} & -I_{k} & 0 \\ 0 & 0 & I_{2^{\lceil\log_2k\rceil}-k}}\sigma_{\mat{\bm{t}_i \\ -\bm{t}_i \\ -1}}\mat{I_{k} \\ -I_{k} \\ 0}t,\qquad
		\bm{t}_i
		=\mat{t_0 & \cdots & t_{i-1} & t_{i+1} & \cdots & t_k}^\top,
	\]
	maps $t$ to the vector 
    \[\mat{t-t_0 & \cdots & t-t_{i-1} & t-t_{i+1} & \cdots & t-t_k & 1 & \cdots & 1}^\top\in[-2, 2]^{2^{\lceil\log_2k\rceil}},\] 
    regardless of the signs of its entries.
	Without altering the size of $\varphi_{p_i}^{(0)}$, we can swap the entries of $\varphi_{p_i}^{(0)}(t)$ such that adjacent entries correspond to opposing $t_j$'s, e.g., such that the first two entries are $t-t_0$ and $t-t_k$ rather than $t-t_0$ and $t-t_1$ and so on.
	This ensures that the products of adjacent entries stay uniformly bounded rather than some products growing and some shrinking, better bounding the size of the network.
	A slight modification of (the proof of) Lemma \ref{lemma:mult_network} then yields a network $\overline{\varphi}^{\mathrm{mult}}_{\ell_3}$ of the same asymptotic size as in Lemma \ref{lemma:mult_network} such that $|\varphi^{\mathrm{mult}}_{\ell_3}(x, y)-xy|\le 2^{-\ell_3}$ for all $x, y\in[-2, 2]$.
	Let $\varphi_{p_i}^{(j)}$ be a parallelisation of $2^{\lceil\log_2k\rceil-j}$ copies of this network, and set
	\[
		\varphi_{p_i}
		=\varphi_{p_i}^{(\lceil\log_2k\rceil)}\circ\cdots\circ\varphi_{p_i}^{(1)}\circ\varphi_{p_i}^{(0)}.
	\]
	To ensure that $\varphi_{p_i}$ satisfies $|\varphi_{p_i}-p_i|\lesssim k2^{-\ell_3}$, fix $t\in[-1, 1]$, and for notation, set $\xi^{(j)}_n=(\varphi_{p_i}^{(j)}\circ\cdots\circ\varphi_{p_i}^{(1)}\circ\varphi_{p_i}^{(0)}(t))_n$ and $y_l=(\varphi_{p_i}^{(0)}(t))_l$.
	We then claim that
	\[
		\Big|\xi^{(j)}_{n}-\prod_{l=(n-1)2^j+1}^{n2^j}y_l\Big|
		\le (2^j-1)2^{-\ell_3}.
	\]
	This is true by construction for $j=1$, so assume this holds for some $j\ge1$.
	We then have
	\begin{align*}
		\bigg|\xi^{(j+1)}_{n}-\prod_{l=(n-1)2^{j+1}+1}^{n2^{j+1}}y_l\bigg|
		&=\bigg|\varphi^{\mathrm{mult}}_{\ell_3}(\xi^{(j)}_{2n-1}, \xi^{(j)}_{2n})-\prod_{l=(n-1)2^{j+1}+1}^{n2^{j+1}}y_l\bigg| \\
		&\le 2^{-\ell_3}+\bigg|\xi_{2n-1}^{(j)}\xi_{2n}^{(j)}-\bigg(\prod_{l=(2n-2)2^j+1}^{(2n-1)2^j}y_l\bigg)\bigg(\prod_{l=(2n-1)2^j+1}^{2n2^j}y_l\bigg)\bigg| \\
		&\le2^{-\ell_3}\bigg(1+\bigg(|\xi_{2n-1}^{(j)}|+\prod_{l=(2n-1)2^j+1}^{2n2^j}|y_l|\bigg)(2^{j}-1)\bigg).
	\end{align*}
	By our previous rearranging of entries in $\varphi_{p_i}^{(0)}(t)$, we have that $|\xi_{2n-1}^{(j)}|+\prod_{l=(2n-1)2^j+1}^{2n2^j}|y_l|\le 2$ for all $j\ge1$, and the claim follows.
	This then implies that $|\varphi_{p_i}(t)-p_i(t)|\lesssim k2^{-\ell_3}$ for all $t\in[-1, 1]$ as desired.

	We now shift from analysing the error of the network to its size instead.
	First, it is apparent from their construction that $\varphi_{p_i}^{(0)}\in\widetilde{\Phi}(1, k, k, 1)$, while it also holds that $\varphi_{p_i}^{(j)}\in\widetilde{\Phi}(\ell_3, 2^{\lceil\log_2k\rceil-(j-1)}, 2^{\lceil\log_2k\rceil-j}\ell_3, 1)$.
	Hence, since $\sum_{j=1}^{\lceil\log_2k\rceil}2^{\lceil\log_2k\rceil-j}=\sum_{j=0}^{\lceil\log_2k\rceil-1}2^j=2^{\lceil\log_2k\rceil}-1$,
	\begin{align*}
		\varphi_{p_i}
		&\in\widetilde{\Phi}\Big(\lceil\log_2k\rceil\ell_3, k, (2^{\lceil\log_2k\rceil}-1)\ell_3+k, 1\Big) \\
		&=\widetilde{\Phi}(\log N\log\log N, \log N, (\log N)^2, 1).
	\end{align*}
	Parallelising this with $\varphi_{f_m, t_i}$, we thus get a network in $\widetilde{\Phi}(\log N\log\log N, N, N\log N, N^{\frac{1}{d}}\vee\frac{1}{\sqrt{\underline{T}}})$.
	Since this dominates the size of $\varphi_{\ell_2}^{\mathrm{mult}}$ and since $|c_i|{\color{black}\le\frac{2^{k-1}}{k}\le\frac{N^{\frac{2s+d}{2d}}}{\frac{2s+d}{d}\log N}}$, it follows that each term of $\varphi_{f_m}$ {\color{black} is in $\widetilde{\Phi}(L_M, W_M, S_M, B_M)$, where}
	\begin{align*}
		L_M
		&=\log N\log\log N, \\
		W_M
		&=N\log N, \\
		S_M
		&=N(\log N)^2,\quad\text{and} \\
		B_M
		&={\color{black}\frac{N^{\frac{2s+d}{2d}}}{\log N}}\vee\frac{1}{\sqrt{\underline{T}}}.
	\end{align*}
	Next, by construction of the $\pi_m$'s, we have that $\pi_m\in\widetilde{\Phi}(1, 1, 1, \frac{1}{\underline{T}})$, where parallelising $\varphi_{f}^{(m)}$ with $\pi_m$ does not change the asymptotic size of the network.
	Since the size of $\varphi_{\ell_1}^{\mathrm{mult}}$ is also dominated by that of $\varphi_{f_m}$, it follows that each term of $\varphi_{f}$ is included in $\widetilde{\Phi}(L_M, W_M, S_M, B_M)$ as well. Parallelising all of these and summing them yields
	\begin{align*}
		L(N)
		&=\log N\log\log N, \\
		W(N)
		&=MN\log N, \\
		S(N)
		&=MN(\log N)^2,\quad\text{and} \\
		B(N)
		&={\color{black}\frac{N^{\frac{2s+d}{2d}}}{\log N}}\vee\frac{1}{\underline{T}}.
	\end{align*}
    Finally, since $s>\frac{d}{2}$, we have by similar calculations as those in the proof of Lemma \ref{prop:approx_spectral0} that for some $\widetilde{C}<\infty$ depending only on $D$ and $p_0$, it holds that $\sup_{x\in D}|\partial_{x_i}h_N(x, t)|\le\widetilde{C}\frac{1}{\sqrt{t}}$.
    Letting $\varphi^{\mathrm{cap}}$ be as in Lemma \ref{lem:cap_network} (with $m=\log_2(\underline{T}^{-1})\asymp\log N$), it follows that also $\sup_{x\in D}|\partial_{x_i}h_N(x, t)|\le C\varphi^{\mathrm{cap}}(t)$ for all $t\in[\underline{T},\overline{T}]$, whence we get a no worse approximation by replacing $\varphi_{\partial_{x_i}h_N}$ with $\varphi_{\partial_{x_i}h_N}\wedge (C\varphi^{\mathrm{cap}})$ and this network has the desired bound.
    Furthermore, since $\varphi^{\mathrm{cap}}\in\widetilde{\Phi}(\log N\log\log N, \log N, \log N\log\log N, 1/\sqrt{\underline{T}})$, and this is dominated by the network size of $\varphi_{\partial_{x_i}h_N}$, taking this minimum does not alter the size of the network.
    This finishes the proof.
\end{proof}

\paragraph{Step 3: Putting things together}
With the essential preparations from Step 1 and 2, we can now finally prove Theorem \ref{theorem:approx_nabla_log_h_N}.
\medskip 

\begin{proof}[Proof of Theorem \ref{theorem:approx_nabla_log_h_N}]
Let $\varphi_{h_N},\varphi_{\partial_{x_i} h_N}$, $i\in\{1, \ldots, d\}$ and $N\in\N$, be as in Lemma \ref{lemma:approx_h_N}.
	Parallelising the latter of these yields a network $\varphi_{\nabla_xh_N}\in\widetilde{\Phi}(L(N), W(N), S(N), B(N))$ approximating $\nabla_xh_N$.
	We then claim that $\varphi_{\sco}$ defined as
	\[
		\varphi_{\sco}(x, t)
		=\varphi^{\mathrm{mult}, d}_{\ell}(\varphi^{\mathrm{rec}}_{\ell}(\varphi_{h_N}(x, t)\vee\alpha), \varphi_{\nabla_xh_N}(x, t)),\qquad
		\ell
		=\Big\lceil\frac{s}{d}\log_2N\Big\rceil,
	\]
	has the desired properties.
	Indeed, for the size of the network, notice that $\varphi_{h_N}\vee\alpha$ is bounded above (by $\n{p_0}_{\infty}$) and below, and hence $\varphi^{\mathrm{rec}}_{\ell}\circ(\varphi_{h_N}\vee\alpha)$ is bounded above by $2\alpha^{-1}$ for $N$ large enough, while the entries of $\varphi_{\nabla_x h_N}$ are all bounded numerically by $\frac{C}{\sqrt{\underline{T}}}$ some $C<\infty$ since $s>\frac{d}{2}$.
	Thus, $\varphi_{\ell}^\mathrm{rec}\in\widetilde{\Phi}(\log N\log\log N, \log N\log\log N, \log N\log\log N, 1)$ and $\varphi^{\mathrm{mult},d}_{\ell}\in\widetilde{\Phi}(\log N, \log N, \log N, \frac{1}{\sqrt{\underline{T}}})$, whereby $\varphi_{\sco}$ has the same asymptotic size as $\varphi_{\nabla_xh_N}$.
        Note also that $|\varphi_\sco(x, t)|\le \frac{4C\alpha^{-1}}{\sqrt{t}}\vee1$ for all $x\in D$, $t\in[\underline{T},\overline{T}]$ and $N$ large enough.
So all that remains is to show that $\varphi_\sco$, as defined above, satisfies \eqref{eq:err_rate_score_approximation}.
	By Proposition \ref{prop:approx_spectral0} and the triangle inequality, this is equivalent to verifying
	\begin{equation*}
		\int_{\underline{T}}^{\overline{T}}\int_D
		\bigg|\varphi_{\sco}(x,t)-\frac{\nabla_xh_N(x,t)}{h_N(x,t)\vee\alpha}\bigg|^2p_t(x)\,\mathrm{d}x\,\mathrm{d}t
		\lesssim N^{-\frac{2s}{d}}(\log N)^2(\overline{T}+\log(\underline{T}^{-1})).
	\end{equation*}
	which follows if we can show that $\n{|\varphi_\sco(\,\cdot,t)-\frac{\nabla_x h_N(\cdot, t)}{h_N(\cdot, t)\vee\alpha}|}_{L^2}^2\lesssim \varepsilon(t)N^{-\frac{2s}{d}}(\log N)^2$ for each $t\in[\underline{T},\overline{T}]$, where $\varepsilon(t)$ is as in Lemma \ref{lemma:approx_h_N}.
For doing so, first note that
	\begin{align*}
		\n[\bigg]{\bigg|\varphi_\sco(\,\cdot,t)-\frac{\nabla_x h_N(\cdot, t)}{h_N(\cdot, t)\vee\alpha}\bigg|}_{L^2}
		&\le\n{|\varphi_\sco(\,\cdot,t)-\varphi^{\mathrm{rec}}_{\ell}(\varphi_{h_N}(\,\cdot,t)\vee\alpha)\varphi_{\nabla_xh_N}(\,\cdot,t)|}_{L^2} \\
		&\qquad+\n[\bigg]{\bigg(\varphi_{\ell}^{\mathrm{rec}}(\varphi_{h_N}(\,\cdot,t)\vee\alpha)-\frac{1}{\varphi_{h_N}(\,\cdot,t)\vee\alpha}\bigg)|\varphi_{\nabla_xh_N}(\,\cdot,t)|}_{L^2} \\
		&\qquad+\n[\bigg]{\bigg|\frac{\varphi_{\nabla_xh_N(\cdot, t)}}{\varphi_{h_N}(\,\cdot,t)\vee\alpha}-\frac{\nabla_xh_N(\cdot, t)}{h_N(\cdot, t)\vee\alpha}\bigg|}_{L^2}.
	\end{align*}
	The first term is simply evaluated as
	\[
		\n{|\varphi_\sco(\,\cdot,t)-\varphi^{\mathrm{rec}}_{\ell}(\varphi_{h_N}(\,\cdot,t)\vee\alpha)\varphi_{\nabla_xh_N}(\,\cdot,t)|}_{L^2}
		\le 2^{-\ell}\Big(4\sqrt{d}\alpha^{-1}\n[\Big]{p_0-\frac{1}{\mathrm{Leb}(D)}}_{H^s}\mathrm{Leb}(D)\Big)
		\lesssim N^{-\frac{s}{d}},
	\]
	while for the second term, we have
	\begin{align*}
		\n[\bigg]{\bigg(\varphi_{\ell}^{\mathrm{rec}}(\varphi_{h_N}(\,\cdot,t)\vee\alpha)-\frac{1}{\varphi_{h_N}(\,\cdot,t)\vee\alpha}\bigg)|\varphi_{\nabla_xh_N}(\,\cdot,t)|}_{L^2}
		&\le 2^{-\ell}\n{|\varphi_{\nabla_xh_N}(\,\cdot,t)|}_{L^2} \\
		&\lesssim2^{-\ell}\Big(\sqrt{d\varepsilon(t)}N^{-\frac{s}{d}}\log N+\n{p_0}_{H^1}\Big) \\
		&\lesssim \sqrt{\varepsilon(t)}N^{-\frac{s}{d}}.
	\end{align*}
For the final term, one obtains, similarly to the proof of Proposition \ref{prop:approx_spectral0},
	\begin{align*}
		\n[\bigg]{\bigg|\frac{\varphi_{\nabla_xh_N}(\cdot, t)}{\varphi_{h_N}(\,\cdot,t)\vee\alpha}-\frac{\nabla_xh_N(\cdot, t)}{h_N(\cdot, t)\vee\alpha}\bigg|}_{L^2}
		&\le\alpha^{-1}\n{|\varphi_{\nabla_xh_N}(\,\cdot,t)-\nabla_xh_N(\cdot, t)|}_{L^2} \\
		&\qquad+\alpha^{-2}\n{|\nabla_xh_N(\cdot, t)(h_N(\cdot, t)-\varphi_{h_N}(\,\cdot,t))}_{L^2} \\
		&\lesssim\alpha^{-1}\sqrt{d\varepsilon(t)}N^{-\frac{s}{d}}\log N
		+\alpha^{-2}\n{p_0}_{H^1}\sqrt{\varepsilon(t)}N^{-\frac{s}{d}}\log N \\
		&\lesssim \sqrt{\varepsilon(t)}N^{-\frac{s}{d}}\log N.
	\end{align*}
        Finally, setting $N=n^{\frac{d}{2s+d}}$ yields both the desired network size and error rate,	which finishes the proof.
\end{proof}

\subsection{Proof of the main result}\label{susbsec:proof_main}
With the previous preparations, we can now prove our main result on the generative error, Theorem \ref{theo:main}. To this end, we use the general error decomposition \eqref{eq:error_decomp} in combination with Lemma \ref{lem:early_stop} and Lemma \ref{lem:ergodic_err} to control the early stopping and ergodic error contributions, as well as with the approximation result Theorem \ref{theorem:approx_nabla_log_h_N} that allows us to obtain an optimised upper bound on the empirical score loss via Theorem \ref{theo:score_loss_emp}.
\medskip

\begin{proof}[Proof of Theorem \ref{theo:main}]
Choose $\delta = n^{-2s/(2s+d)}$ and $N = n^{d/(2s+d)}$. By the choices for $\underline{T}, \overline{T}$ and $N$, Theorem \ref{theorem:approx_nabla_log_h_N} implies that there exists a family of neural networks $\mathcal{S}$ with the specified size constraints, such that for some $\mathfrak{s} \in \mathcal{S}$ we have 
\[\int_{\underline{T}}^{\overline{T}}\int_D
\big|\sco(x,t)-\nabla_x\log p_t(x)\big|^2p_t(x)\,\mathrm{d}x\,\mathrm{d}t
\lesssim n^{-\frac{2s}{2s+d}}(\log n)^3.\]
With these network size constraints, we get for $C(\mathcal{S}) \coloneq C$ and $c$ from Lemma \ref{lem:cover}, by using a straightforward modification of \citet[Lemma C.2]{oko23}, that 
\begin{align*} 
\log \mathcal{N}(\mathcal{S},\lVert  \cdot  \rVert_{D \times [\underline{T},\overline{T}]}, \delta/(cC\overline{T})) &\lesssim S(n)L(n)\log\big(\delta^{-1} \overline{T}^2L(n)\lVert W(n) \rVert_\infty B(n)\big)\\
&\lesssim n^{\frac{d}{2s+d}} (\log n)^5 \log \log n.
\end{align*}
Lemma \ref{lem:cover} therefore implies that 
\[\log \mathcal{N}(\mathcal{L},\lVert  \cdot  \rVert_D,\delta) \lesssim n^{\frac{d}{2s+d}} (\log n)^5 \log \log n\]
as well. By Lemma \ref{lem:scoreloss_bound} and the choices of $\overline{T},\underline{T}$, we can choose $C(\mathcal{L}) \lesssim \log n$ so that 
\[\frac{C(\mathcal{L})}{n} \log \mathcal{N}(\mathcal{L},\lVert \cdot \rVert_D,\delta) \lesssim n^{\frac{-2s}{2s+d}} (\log n)^6 \log \log n.\]
Using Theorem \ref{theorem:approx_nabla_log_h_N}, it follows from the above and Theorem \ref{theo:score_loss_emp} by our choice of $N$ that 
\begin{align*} 
&\E\Big[\int_{\underline{T}}^{\overline{T}} \int_{\overline{D}} \lvert \hat{\mathfrak{s}}_n(x,t) - \nabla \log p_t(x) \rvert^2 p_t(x) \diff{x} \diff{t}  \Big] \\ 
&\quad \leq 2 \inf_{\sco \in \mathcal{S}} \int_{\underline{T}}^{\overline{T}} \int_{D} \lvert \sco(x,t) - \nabla \log p_t(x) \rvert^2 p_t(x) \diff{x} \diff{t} + 2\frac{C(\mathcal{L})}{n}\Big(\frac{37}{9}\log \mathcal{N}(\mathcal{L}, \lVert \cdot \rVert_{D}, \delta) + 32\Big) + 3\delta\\
&\quad\lesssim n^{-\frac{2s}{2s+d}}(\log n)^3 + n^{-\frac{2s}{2s+d}} (\log n)^6 \log \log n +  n^{-\frac{2s}{2s+d}}\\
&\quad\lesssim n^{-\frac{2s}{2s+d}} (\log n)^6 \log \log n.
\end{align*}
Thus, 
\begin{align*}
\E\Big[\TV(\overline{p}{}^{\mathfrak{s}^\circ}_{\overline{T} - \underline{T}}, \overline{p}{}^{\hat{\mathfrak{s}}_n}_{\overline{T} - \underline{T}}) \Big] &\leq \sqrt{\frac{1}{2}\E\Big[\KL{\overline{p}{}^{\mathfrak{s}^\circ}_{\overline{T} - \underline{T}}}{\overline{p}{}^{\hat{\mathfrak{s}}_n}_{\overline{T} - \underline{T}}} \Big]}\\
&= \sqrt{\E\Big[\int_{\underline{T}}^{\overline{T}} \int_{\overline{D}} f(x)\lvert \hat{\mathfrak{s}}_n(x,t) - \nabla \log p_t(x) \rvert^2 p_t(x) \diff{x} \diff{t}  \Big] }\\ 
&\leq \sqrt{\lVert f \rVert_{\overline{D}}\E\Big[\int_{\underline{T}}^{\overline{T}} \int_{\overline{D}}\lvert \hat{\mathfrak{s}}_n(x,t) - \nabla \log p_t(x) \rvert^2 p_t(x) \diff{x} \diff{t}  \Big] }\\
&\lesssim n^{-\frac{s}{2s+d}}(\log n)^3 (\log \log n)^{1/2},
\end{align*}
where we used Pinsker's inequality and Jensen's inequality together with concavity of $x \mapsto \sqrt{x}$ for the first line, while the second line follows from Theorem \ref{theo:girsa} (Girsanov), together with independence of the driving Brownian motion $\overline{W}$ and the initialisation $\overline{X}_0 \sim p_T$ from the data $(X_{0,i})_{i= 1,\ldots,n}$. 
The proof is concluded by applying the error decomposition \eqref{eq:error_decomp} and using that, by Lemma \ref{lem:early_stop} and Lemma \ref{lem:ergodic_err}, we have
\[\mathrm{TV}(p_0,p_{\underline{T}}) + \TV(\PP(X_{\overline{T}} \in \cdot \mid X_0 \sim p_0), \mathcal{U}(\overline{D})) \lesssim \underline{T}^{\beta/2} + \mathrm{e}^{-\lambda_1 \overline{T}} \lesssim n^{-\frac{s}{2s+d}},\]
by our choices of $\underline{T},\overline{T}$.
\end{proof}

\section{Discussion}\label{sec:discussion}
This paper presents a rigorous investigation of the non-standard class of denoising reflected diffusion models (DRDMs), focusing on statistical convergence guarantees and approximation within constrained settings, as it is relevant for scenarios involving bounded state spaces. This is a first step in extending the statistical analysis of standard denoising diffusion models to the generalised class of \textit{denoising Markov models}  proposed in \cite{benton_shi24}.

{\color{black}A key result of our analysis is the derivation of  convergence rates in total variation that match the minimax lower bound on Sobolev classes  up to a logarithmic factor. More precisely, from \citet[Theorem 4]{yang99}, see also \citet[Proposition 5.2]{oko23} for the verification of their assumptions for $s$-smooth Sobolev functions on $[0,1]^d$, we have that 
\[\inf_{\hat{p}_n} \sup_{p_0 \in H^s(D)} \E_{p_0}[\mathrm{TV}(p_0,\hat{p}_n)] \asymp n^{-\frac{s}{2s+d}},\] 
where the infimum ranges over all estimators $\hat{p}_n$ based on $n$ i.i.d.\ data points having density $p_0$ under $\PP_{p_0}$. Our upper bound stated in \eqref{eq:conv_rate} therefore establishes that DRDMs attain the minimax optimal rate of convergence up to $\log$-factors for specific Sobolev densities.} Note that our logarithmic loss is comparatively small relative to that of unconstrained DDMs in \cite{oko23}, where it is of order $(\log n)^8$.

However, convergence rates (even optimal ones) expressed in terms of the ambient dimension $d$ fall short of capturing the empirical success of DDMs. This gap is related to the \emph{manifold hypothesis}, a prominent idea that real-world high-dimensional data often reside on lower-dimensional manifolds, to which well-trained generative models are believed to adapt. Developing a theoretical underpinning for this hypothesis has therefore been one of the central goals in statistical theory for generative models. 
In the pioneering paper \cite{oko23}, the authors also take a first step towards investigating statistical convergence guarantees of DDMs for data distributed on such lower-dimensional structures by extending their analysis to initial distributions supported on a lower-dimensional hyperplane, where they obtain the almost minimax optimal rate $n^{-\frac{s+1}{2s + \tilde{d}} + \varepsilon}$ in the Wasserstein-1 distance in terms of the sample size $n$ and the subspace dimension $\tilde{d}$. Related statistical results under linear subspace assumptions are given in \cite{chen23b}. In the recent work \cite{tang24}, \citeauthor{tang24} significantly extend this result by establishing (up to $\log$ factors) the minimax convergence rate $C(d) n^{-\frac{s+1}{2s + \tilde{d}}}$ in Wasserstein-1 distance for distributions $p_0$ such that 
\begin{enumerate}[label = (\roman*)]
\item $p_0$ is supported on a compact and $\beta$-smooth $\tilde{d}$-dimensional submanifold $\mathcal{M}$ with positive reach, where $\beta \geq 2$;
\item $p_0$ is bounded away from zero on $\mathcal{M}$;
\item $p_0$ has smoothness of order $s \in [0,\beta-1]$ w.r.t.\ the volume measure on $\mathcal{M}$.
\end{enumerate}
Note that these three conditions mirror the three assumptions from \cite{oko23} mentioned in the introductory Section \ref{sec:intro} in the manifold setting. 
The multiplicative factor $C(d)$ in \cite{tang24}'s convergence rate is of order $d^{s +\tilde{d}/2}$ and thus potentially very large for high ambient dimension $d$.
Most recently, \cite{azangulov24} show that this multiplicative factor can be significantly reduced to the order $\sqrt{d}$ by appropriately choosing the neural network class for score approximation. Extending the DRDM framework to support data on submanifolds (and thus improving the convergence rate) presents additional mathematical challenges. For example, enforcing a reflecting boundary for data supported on lower-dimensional submanifolds would require non-trivial modifications to the spectral score representation and a revised analysis of the associated Sobolev bounds. Such adjustments are beyond the scope of this study, yet our current work may provide a foundational approach for future research in the reflected diffusion context.

The assumptions on $p_0$ in our model (see \ref{ass:init}) play a key role in controlling the approximation error in our DRDM framework under bounded domain constraints. Such assumptions, although somewhat limiting in a practical context, are comparable to those made in \cite{oko23} for the total variation convergence analysis of unconstrained models while maintaining compatibility with the spectral methods that we employ for our statistical analysis. In this context, our more stringent smoothness assumption $s > d/2$ implies Hölder continuity of the data density $p_0$, but allows us to avoid some technical difficulties arising due to the less explicit analytical nature of DRDMs compared to standard DDMs. 

{\color{black}
How to remove the lower bound assumption $p_0\vert_{\operatorname{supp} p_0} \geq \alpha$ that is present in all the works discussed above is a highly relevant and conceptually challenging question. Recent works by \cite{zhang24, stephano25}, and \cite{yakovlev25} prove minimax optimal rates for particular unconstrained diffusion models without lower bound assumptions for  sub-Gaussian densities $p_0$ and push-forward distributions on the ambient space $\R^D$ of the form $g_{\#} \mathcal{U}[0,1]^d$ for Hölder-continuous $g$ and $d \leq D$, respectively. \cite{zhang24} use a more classical kernel estimation and truncation strategy instead of neural network approximations. \cite{stephano25} exploit deeper results on the space-time regularity of the score function of an Ornstein--Uhlenbeck process for direct approximation with $\tanh$ activation function, thereby avoiding the need to approximate $p_t$ and $\nabla p_t$ separately. Finally \cite{yakovlev25} exploit the structure of the score induced by their data assumption $p_0 \sim g_{\#} \mathcal{U}[0,1]^d$  and the Gaussian Ornstein--Uhlenbeck forward transition densities to construct their ReLU neural network based approximation class in a very specific way. These approaches do not translate directly to our non-Gaussian reflected setting and we leave the statistical study of reflected diffusion models without lower bound assumptions to future work.}

In general, our analytical approach in DRDMs differs significantly from that in unconstrained diffusion models, as the bounded domain prevents the explicit Gaussian transition densities commonly used for error control. The semi-explicit nature of these densities in the reflected setting means that, rather than relying on straightforward Gaussian approximations, we implement general spectral decompositions and Sobolev-based bounds informed by the Sobolev smoothness of $p_0$. The score approximation here presents additional technical challenges, which we address through an innovative polynomial-time interpolation procedure that proves crucial to achieving feasible convergence rates. This technique introduces new and effective methods for controlling error contributions in generative models with bounded state spaces {\color{black} and, as outlined in the introduction, may  also provide a versatile tool for statistical analysis of generalised denoising Markov models \citep{benton_shi24, ren25} beyond the scope of this paper.}

Finally, it should be noted that our analysis does not address sampling issues, in particular those arising from simulating the backward reflected process with an estimated drift. While this aspect is important in practical implementations, our current focus on theoretical convergence guarantees serves to isolate and address the approximation errors inherent to the reflection-based model setup itself. Future studies could incorporate error analysis for sampling methodologies specific to reflected processes to extend the results presented here.

%\newpage
\appendix 
\section{Girsanov's theorem for reflected diffusions}
The following result is a version of Girsanov's theorem for reflected diffusions, which is a correction of Theorem 7.1 and Theorem A.6 in \cite{lou23}, {\color{black}where the influence of the diffusion coefficient on the KL divergence has been overlooked.}
\begin{theorem}\label{theo:girsa}
	\label{girsanov}
	Let $(X_t)_{t \in [0,T]}$ and $(\tilde{X}_t)_{t \in [0,T]}$ be solutions of the normally reflected SDEs 
	\begin{align} 
		\diff{X_t}
		&=b(t,X_t) \diff{t} + \sigma(t,X_t) \diff{W_t} + \nu(X_t)\diff{\ell_t},\notag \\ 
		\diff{\tilde{X}_t} 
		&= \tilde{b}(t,\tilde{X}_t) \diff{t} + \sigma(t,\tilde{X}_t) \diff{\tilde{W}_t} +  \nu(\tilde{X}_t)\diff{\tilde{\ell}_t}, \quad \tilde{X}_0 \overset{d}{=} X_0, \label{eq:girsa2}
	\end{align}
	where $b, \tilde{b}$ are bounded on $[0,T] \times \overline{D}$ and  $\sigma(t,\cdot)$ is bounded, Lipschitz and uniformly elliptic in the sense $a(t,\cdot) \coloneqq \sigma(t,\cdot)\sigma(t,\cdot)^\top \succeq \underline{\lambda} \mathbb{I}$ for some $\underline{\lambda} > 0$ and all $t \in [0,T]$. 
    Denote by $\mathbb{P}_{X^T}$ and $\mathbb{P}_{\tilde{X}^T}$ their respective path measures on $\mathcal{C}([0,T], \overline{D})$.
	Then, for $L_t = \int_0^{t} \nu(X_s) \diff{\ell_s}$,
	\begin{align*} 
		&\log \frac{\diff{\mathbb{P}_{\tilde{X}^T}}}{\diff{\mathbb{P}_{X^T}}}(X^T)\\ 
		&\,= \int_0^T (\tilde{b}(t,X_t) - b(t,X_t))^\top a^{-1}(t,X_t) \diff{(X_t-L_t)}\\
        &\qquad- \frac{1}{2} \int_0^T (\tilde{b}(t,X_t) - b(t,X_t))^\top a^{-1}(t,X_t)(\tilde{b}(t,X_t) + b(t,X_t)) \diff{t},
	\end{align*}
	a.s., and 
	\[
		\KL{\mathbb{P}_{X^T}}{\mathbb{P}_{\tilde{X}^T}}
		= \frac{1}{2} \mathbb{E}\Big[\int_0^T \lVert \sigma^{-1}(t,X_t) (\tilde{b}(t,X_t) - b(t,X_t)) \rVert^2 \diff{t}\Big].
	\]
\end{theorem}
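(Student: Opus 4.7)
The plan is to run a standard Girsanov change-of-measure argument on the forward SDE, being careful that (i) the local-time reflection term is intrinsic to the path of $X$ and is therefore preserved under an equivalent change of measure, and (ii) when one expresses the log-density in terms of $X^T$ alone (rather than $W$), the local-time correction $L_t = \int_0^t \nu(X_s)\,\diff{\ell_s}$ enters naturally because $\sigma(t, X_t)\,\diff W_t$ has to be eliminated via the SDE. Define the drift correction $h(t,x) \coloneqq \sigma^{-1}(t,x)(\tilde b(t,x) - b(t,x))$, which is uniformly bounded on $[0,T]\times\overline{D}$ thanks to boundedness of $b,\tilde b$ and the uniform ellipticity bound $a \succeq \underline{\lambda}\mathbb{I}$.

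Because $h$ is bounded, Novikov's condition is satisfied and the exponential
\[
Z_t \coloneqq \exp\Bigl(\int_0^t h(s,X_s)^\top \diff{W_s} - \tfrac{1}{2}\int_0^t \lvert h(s,X_s)\rvert^2\, \diff{s}\Bigr)
\]
is a true $\mathbb{P}_{X^T}$-martingale. Setting $\diff{\mathbb{Q}} = Z_T\,\diff{\mathbb{P}_{X^T}}$ and applying Girsanov, $\widehat{W}_t \coloneqq W_t - \int_0^t h(s,X_s)\,\diff{s}$ is a $\mathbb{Q}$-Brownian motion. Substituting into the forward SDE yields $\diff{X_t} = \tilde b(t,X_t)\,\diff{t} + \sigma(t,X_t)\,\diff{\widehat W_t} + \nu(X_t)\,\diff{\ell_t}$, so $(X,\widehat W, \ell)$ is a weak solution of the reflected SDE \eqref{eq:girsa2}. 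Appealing to weak uniqueness of the normally reflected SDE with bounded measurable drift, Lipschitz uniformly elliptic diffusion coefficient and smooth boundary (via the associated submartingale problem, e.g.\ Stroock--Varadhan style), we identify $\mathbb{Q} = \mathbb{P}_{\tilde X^T}$, hence $\diff{\mathbb{P}_{\tilde X^T}}/\diff{\mathbb{P}_{X^T}} = Z_T$ on path space.

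To obtain the explicit formula in the statement, $W$ must be eliminated from $\log Z_T$. The forward SDE gives $\sigma(t,X_t)\diff{W_t} = \diff(X_t - L_t) - b(t,X_t)\,\diff{t}$, so
\[
\int_0^T h(t,X_t)^\top \diff{W_t} = \int_0^T (\tilde b - b)^\top a^{-1}(t,X_t)\,\diff(X_t - L_t) - \int_0^T (\tilde b - b)^\top a^{-1} b\,(t,X_t)\,\diff{t}.
\]
Combining this with $-\tfrac{1}{2}\lvert h\rvert^2 = -\tfrac{1}{2}(\tilde b - b)^\top a^{-1}(\tilde b - b)$ and grouping the two Lebesgue integrals yields the factor $\tfrac{1}{2}(\tilde b - b)^\top a^{-1}(\tilde b + b)$ displayed in the theorem. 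For the KL identity, take $\mathbb{E}_{\mathbb{P}_{X^T}}$ of $-\log Z_T$: under $\mathbb{P}_{X^T}$, boundedness of $h$ makes $\int_0^\cdot h(s,X_s)^\top \diff{W_s}$ a true martingale starting at $0$, so its expectation vanishes and only the quadratic variation term survives, producing the stated $\tfrac{1}{2}\mathbb{E}\bigl[\int_0^T \lVert \sigma^{-1}(\tilde b - b)\rVert^2\,\diff{t}\bigr]$.

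The main obstacle is pinning down weak uniqueness for the reflected SDE under the stated coefficient regularity, which is what secures the identification $\mathbb{Q} = \mathbb{P}_{\tilde X^T}$; the $\mathcal{C}^\infty$ boundary of $D$ together with uniform ellipticity and boundedness of the drift put us squarely in the regime covered by classical submartingale-problem results for normally reflected diffusions, so this is standard but needs to be explicitly invoked. A secondary (and notationally more delicate) point is that eliminating $W$ on a reflected path must pass through $X - L$, which is the semimartingale to which Girsanov naturally applies; this is precisely the reason why the local-time term $L_t$ appears in the final expression for $\log(\diff{\mathbb{P}_{\tilde X^T}}/\diff{\mathbb{P}_{X^T}})$ but does \emph{not} contribute to the KL formula, where only the (squared) drift difference survives.
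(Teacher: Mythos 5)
Your proposal is correct and follows essentially the same route as the paper's own proof: define the exponential density via the drift correction $h = \sigma^{-1}(\tilde b - b)$, verify Novikov's condition from boundedness and uniform ellipticity, apply Girsanov to produce a new Brownian motion under $\mathbb{Q}$, show that $X$ then weakly solves the $\tilde b$-SDE, invoke weak uniqueness for the reflected SDE to identify $\mathbb{Q}$ with $\mathbb{P}_{\tilde X^T}$, eliminate $W$ through $\sigma\,\diff{W} = \diff(X - L) - b\,\diff{t}$ to recover the stated path-space formula, and take expectations of $-\log Z_T$ (killing the martingale part) for the KL identity. The only cosmetic difference is that you cite Stroock--Varadhan submartingale-problem theory for weak uniqueness, whereas the paper argues by analogy with Karatzas--Shreve, Chapter 5, Proposition 3.10 -- both are standard references and equivalent for this purpose.
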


\begin{proof} 
	Let 
	\begin{align*}
		Z_T
		&\coloneqq \exp\Big(\int_0^T (\tilde{b}(t,X_t) - b(t,X_t))^\top (\sigma^{-1}(t,X_t))^{\top}\diff{W_t}\\
        &\qquad - \frac{1}{2} \int_0^T \lVert \sigma^{-1}(t,X_t) (\tilde{b}(t,X_t) - b(t,X_t)) \rVert^2 \diff{t} \Big),
	\end{align*}
	and define $\diff{\mathbb{Q}_T} \coloneqq Z_T \diff{\mathbb{P}}$.
	Since $\overline{\beta} \coloneqq \sup_{t \in [0,T], x \in \overline{D}} \lVert \beta(t,x) \rVert < \infty$ for $\beta \in \{b , \tilde{b}\}$, we have 
	\[
		\sup_{t \in [0,T], x \in \overline{D}}\n{\sigma^{-1}(t,x) \beta(t,x)}
		\le\frac{\overline{\beta}}{\underline{\lambda}}  
		<\infty.
	\]
	Thus, Novikov's condition is satisfied, making $\mathbb{Q}_T$ a probability measure equivalent to $\PP$, and Girsanov's theorem implies that 
	\[
		\tilde{W}_t
		\coloneqq W_t - \int_0^t \sigma^{-1}(s,X_s)(\tilde{b}(s,X_s) - b(s,X_s)) \diff{s}, \quad t \in [0,T], 
	\]
	is a $\mathbb{Q}_T$-Brownian motion, see \citet[Chapter 3, Theorem 5.1, Corollary 5.13]{karatzas91}. 
	Thus,
	\[
		X_t
		= X_0 + \int_0^t \tilde{b}(s,X_s) \diff{s} + \int_0^t \sigma(s,X_s) \diff{\tilde{W}_s} + \int_0^t \nu(X_s) \diff{\ell_s}, \quad t \in [0,T],
	\]
	where $\ell_t  = \int_0^t \bm{1}_{\{X_s \in \partial D\}} \diff{\ell_s},$ for any $t \in [0,T]$, $\PP$-a.s.\ and hence also $\mathbb{Q}_T$-a.s. Since $X_0 \overset{d}{=} \tilde{X}_0$, it follows that $X^T$ is a weak solution to the reflected SDE \eqref{eq:girsa2} and analogously to \citet[Chapter 5, Proposition 3.10]{karatzas91}, it holds that under the given assumptions on $\tilde{b},\sigma$, weak solutions to \eqref{eq:girsa2} are unique in law. 
	Thus, $X^T$ under $\mathbb{Q}_T$ has the same law as $\tilde{X}^T$ under $\mathbb{P}$, such that, upon noting that
	\begin{align*}
		\log Z_T
		& = \int_0^T (\tilde{b}(t,X_t) - b(t,X_t))^\top a^{-1}(t,X_t) \diff{(X_t - L_t)}\\
        &\qquad - \frac{1}{2} \int_0^T  (\tilde{b}(t,X_t) - b(t,X_t))^\top a^{-1}(t,X_t)(\tilde{b}(t,X_t) + b(t,X_t)) \diff{t},
	\end{align*}
	the first assertion follows.
	This yields immediately
	\begin{align*} 
		\KL{\mathbb{P}_{X^T}}{\mathbb{P}_{\tilde{X}^T}}
		=-\mathbb{E}\Big[\log \frac{\diff{\mathbb{P}_{\tilde{X}^T}}}{\diff{\mathbb{P}_{X^T}}}(X^T) \Big]
		&=-\mathbb{E}[\log Z_T]\\
		&=\frac{1}{2}\mathbb{E}\Big[\int_0^T \n{\sigma^{-1}(t,X_t) (\tilde{b}(t,X_t) - b(t,X_t))}^2 \diff{t}\Big].
	\end{align*}
\end{proof}

\section{Bernstein condition for the denoising score matching excess loss and generalisation error}\label{app:bernstein}
The goal of this section is to prove \eqref{eq:bernstein_condition} and thereby justify the generalisation error decomposition from Theorem \ref{theo:score_loss_emp} in a generalised Markovian framework that in particular includes our reflected forward model, as well as the unconstrained Ornstein--Uhlenbeck models used in other works. 

Let $Y_1,\ldots, Y_n \overset{\mathrm{i.i.d}}{\sim} \mu$ be a collection of random variables with state space $\mathcal{X} \subset \R^d$. Let also $(X_t)_{t \geq 0}$ be a Markov process with state space $\mathcal{X}$ and transition densities $(q_t)_{t \geq 0}$ w.r.t.\ some reference measure $\nu$, that is, $\PP^x(X_t \in \diff{y}) = q_t(x,y) \,\nu(\diff{y})$, where $\PP^x = \PP(\cdot \mid X_0 = x)$. Denote by $p_t$ the density of $X_t$ started in $\mu$ at time $t > 0$, i.e., $p_t(y) \nu(\diff{y}) = \PP^\mu(X_t \in \diff{y}) = \int_{\mathcal{X}} q_t(x,y) \, \mu(\diff{x})\, \nu(\diff{y})$. We assume that the \textit{score} $s^\circ$ of this Markov process given by $s^\circ(x,t) = \nabla \log p_t(x)$ as well as $\nabla \log q_t(x,y)$ are well defined for all $t > 0, x,y \in \mathcal{X}$, and define for some measurable candidate function $s$ the \textit{denoising score matching loss} over an interval $[\underline{T}, \overline{T}] \subset (0,\infty)$ by 
\begin{align*} 
L_s(x) &= \int_{\underline{T}}^{\overline{T}} \int_{\mathcal{X}} \lvert s(y,t) - \nabla \log q_t(x,y) \rvert^2 q_t(x,y) \,\nu(\diff{y}) \diff{t}\\
&= \E^x\Big[\int_{\underline{T}}^{\overline{T}} \lvert s(X_t,t) - \nabla \log q_t(x,X_t) \rvert^2 \diff{t} \Big],
\end{align*}
which gives
\[\E^\mu[L_s(X_0)] = \int_{\underline{T}}^{\overline{T}} \iint_{\mathcal{X} \times \mathcal{X}} \lvert s(y,t) - \nabla \log q_t(x,y) \rVert^2 q_t(x,y) \,\nu(\diff{y})\, \mu(\diff{x}) \diff{t} .\]
Then, given sufficient integrability properties that allow interchanging the order of differentiation and integration, a few simple lines of calculation establish the equivalence between denoising and \textit{explicit} score matching \cite{vincent11}, which is the first equality in 
\begin{align*}
\E^\mu[L_s(X_0)] &= \int_{\underline{T}}^{\overline{T}} \int_{\mathcal{X}} \lvert s(y,t) - \nabla \log p_t(y) \rvert^2 p_t(y) \,\nu(\diff{y})  \diff{t} + C_{\underline{T},\overline{T}}\\
&= \int_{\underline{T}}^{\overline{T}} \int_{\mathcal{X}} \lvert s(y,t) - s^\circ(y,t) \rvert^2 p_t(y) \,\nu(\diff{y})  \diff{t} + C_{\underline{T},\overline{T}},
\end{align*}
where $C_{\underline{T},\overline{T}} = \E^\mu[L_{s^\circ}(X_0)]$ is independent of $\mathcal{S}$. In particular, we obtain 
\begin{equation}\label{eq:equiv_denoise}
\int_{\underline{T}}^{\overline{T}} \int_{\mathcal{X}} \lvert s(y,t) - \nabla \log p_t(y) \rvert^2 p_t(y) \,\nu(\diff{y})  \diff{t} = \E^\mu[L_s(X_0)] - \E^\mu[L_{s^\circ}(X_0)].
\end{equation}
As discussed before, a natural and tractable choice for the score $s^\circ$ given the data $Y_1,\ldots,Y_n$ is therefore given by the empirical risk minimiser 
\[\hat{s} \in \argmin_{s \in \mathcal{S}} \frac{1}{n} \sum_{i=1}^n L_s(Y_i),\]
where $\mathcal{S}$ is some appropriate class of candidate functions. 
The following result shows that the excess risk $L_s - L_{s^\circ}$ satisfies a Bernstein condition w.r.t.\ $\mu$, which is essential for the denoising score loss to work well as an empirical risk minimisation objective.

\begin{lemma} 
Suppose that $\sup_{s \in \mathcal{S} \cup \{s^\circ\}} \lVert L_{s} \rVert_{\mathcal{X}} \leq C(\mathcal{L}) < \infty$. Then, 
\[\E^\mu\big[(L_s(X_0) - L_{s^\circ}(X_0))^2\big] \leq 4C(\mathcal{L}) \E^\mu[L_s(X_0) - L_{s^\circ}(X_0)].\]
\end{lemma}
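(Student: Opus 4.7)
The plan is to derive a pointwise bound on the squared excess loss $(L_s(x) - L_{s^\circ}(x))^2$ via an application of Cauchy--Schwarz, then integrate against $\mu$ and identify the resulting expression with the expected excess loss through the denoising/explicit score matching equivalence \eqref{eq:equiv_denoise}.

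\textbf{Step 1: Algebraic decomposition.} Denoting $h_s(x,y,t) \coloneqq s(y,t) - \nabla_y \log q_t(x,y)$ and using the identity $|u|^2 - |v|^2 = (u-v)\cdot(u+v)$ componentwise, we rewrite
\begin{equation*}
L_s(x) - L_{s^\circ}(x) = \int_{\underline{T}}^{\overline{T}} \int_{\mathcal{X}} (s(y,t) - s^\circ(y,t)) \cdot (h_s + h_{s^\circ})(x,y,t) \, q_t(x,y)\, \nu(\diff{y}) \diff{t}.
\end{equation*}

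\textbf{Step 2: Cauchy--Schwarz.} Viewing the right-hand side as a scalar product in $L^2([\underline{T},\overline{T}] \times \mathcal{X}, q_t(x,y)\,\nu(\diff{y})\diff{t})$, Cauchy--Schwarz gives
\begin{equation*}
(L_s(x) - L_{s^\circ}(x))^2 \leq \Big(\int_{\underline{T}}^{\overline{T}} \int_{\mathcal{X}} |s - s^\circ|^2 q_t \,\nu(\diff{y}) \diff{t}\Big) \cdot \Big(\int_{\underline{T}}^{\overline{T}} \int_{\mathcal{X}} |h_s + h_{s^\circ}|^2 q_t\,\nu(\diff{y})\diff{t}\Big).
\end{equation*}
Next, the elementary bound $|h_s + h_{s^\circ}|^2 \leq 2|h_s|^2 + 2|h_{s^\circ}|^2$ together with the definition of $L_s$ shows that the second factor is at most $2(L_s(x) + L_{s^\circ}(x)) \leq 4C(\mathcal{L})$ by the uniform loss hypothesis.

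\textbf{Step 3: Integration against $\mu$.} Combining the previous two steps and integrating against $\mu$, we obtain
\begin{equation*}
\E^\mu[(L_s(X_0) - L_{s^\circ}(X_0))^2] \leq 4C(\mathcal{L}) \int_{\underline{T}}^{\overline{T}} \int_{\mathcal{X}} |s(y,t) - s^\circ(y,t)|^2 \Big(\int_{\mathcal{X}} q_t(x,y)\,\mu(\diff{x})\Big)\nu(\diff{y})\diff{t}.
\end{equation*}
The inner integral equals $p_t(y)$, so the right-hand side equals $4C(\mathcal{L}) \cdot \E^\mu[L_s(X_0) - L_{s^\circ}(X_0)]$ by the equivalence \eqref{eq:equiv_denoise}, yielding the claim.

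The only subtle point is Step 2: one must apply Cauchy--Schwarz with respect to the measure $q_t(x,y) \nu(\diff{y})\diff{t}$ rather than treating the cross term $(s - s^\circ)\cdot(h_s + h_{s^\circ})$ via a crude product bound, which would not give the correct scaling. Once this measure is identified, the bound on the second factor is immediate from the uniform hypothesis on $L_s$ and $L_{s^\circ}$, and the final identification with the expected excess loss is a direct application of Fubini and \eqref{eq:equiv_denoise}.
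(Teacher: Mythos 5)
Your proof is correct and follows essentially the same route as the paper's: both start from the identity $|u|^2-|v|^2=\langle u+v,u-v\rangle$, apply Cauchy--Schwarz to obtain the factorisation into $\int\int|s-s^\circ|^2 q_t\,\nu(\diff{y})\diff{t}$ and the term bounded by $2(L_s+L_{s^\circ})\le 4C(\mathcal{L})$, and then integrate against $\mu$ and invoke \eqref{eq:equiv_denoise}. The only cosmetic difference is that you apply Cauchy--Schwarz once in the vector-valued $L^2$ space over $[\underline{T},\overline{T}]\times\mathcal{X}$, whereas the paper chains it through three applications (Euclidean, then over $y$, then over $t$), which yields the identical bound.
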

\begin{proof} 
Using the elementary equality $\lvert a \rvert^2 - \lvert b \rvert^2 = \langle a+b,a-b\rangle$ for vectors $a,b$, and the Cauchy--Schwarz inequality several times, we find for any $x \in \cX$,
\begin{align*} 
&\lvert L_s(x) - L_{s^\circ}(x)\rvert\\
&\,\leq \int_{\underline{T}}^{\overline{T}} \int_{\mathcal{X}} \lvert \langle s(y,t) - s^\circ(y,t), s(y,t) + s^\circ(y,t) - 2 \nabla \log q_t(x,y)\rangle \rvert q_{t}(x,y) \,\nu(\diff{y}) \diff{t}\\ 
&\,\leq \int_{\underline{T}}^{\overline{T}} \int_{\mathcal{X}} \lvert s(y,t) - s^\circ(y,t) \rvert \lvert s(y,t) + s^\circ(y,t) - 2 \nabla \log q_t(x,y)\rvert  q_{t}(x,y) \,\nu(\diff{y}) \diff{t}\\ 
&\,\leq \int_{\underline{T}}^{\overline{T}} \Big(\int_{\mathcal{X}} \lvert s(y,t) - s^\circ(y,t) \rvert^2 q_t(x,y)\,\nu(\diff{y})\Big)^{1/2} \\
&\,\qquad \quad \times\Big(\int_{\mathcal{X}} \lvert s(y,t) + s^\circ(y,t) - 2 \nabla \log q_t(x,y)\rvert^2  q_{t}(x,y) \,\nu(\diff{y})\Big)^{1/2} \diff{t} \\ 
&\,\leq \Big(\int_{\underline{T}}^{\overline{T}} \int_{\mathcal{X}} \lvert s(y,t) - s^\circ(y,t) \rvert^2 q_t(x,y)\,\nu(\diff{y}) \diff{t}\Big)^{1/2} \\ 
&\, \qquad\quad\times\Big(\int_{\underline{T}}^{\overline{T}} \int_{\mathcal{X}} \lvert s(y,t) + s^\circ(y,t) - 2 \nabla \log q_t(x,y)\rvert^2  q_{t}(x,y) \,\nu(\diff{y}) \diff{t} \Big)^{1/2} \\
&\,\leq \sqrt{2(L_s(x) + L_{s^\circ}(x))} \Big(\int_{\underline{T}}^{\overline{T}} \int_{\mathcal{X}} \lvert s(y,t) - s^\circ(y,t) \rvert^2 q_t(x,y)\,\nu(\diff{y}) \diff{t}\Big)^{1/2}\\
&\,\leq 2\sqrt{C(\mathcal{L})} \Big(\int_{\underline{T}}^{\overline{T}} \int_{\mathcal{X}} \lvert s(y,t) - s^\circ(y,t) \rvert^2 q_t(x,y)\,\nu(\diff{y}) \diff{t}\Big)^{1/2} 
\end{align*}
Thus, using \eqref{eq:equiv_denoise},
\begin{align*} 
\E^\mu\big[(L_s(X_0) - L_{s^\circ}(X_0))^2\big] &\leq 4C(\mathcal{L}) \int_{\mathcal{X}}\int_{\underline{T}}^{\overline{T}} \int_{\mathcal{X}} \lvert s(y,t) - s^\circ(y,t) \rvert^2 q_t(x,y)\,\nu(\diff{y}) \diff{t} \,\mu(\diff{x}) \\
&= 4C(\mathcal{L}) \int_{\underline{T}}^{\overline{T}} \int_{\mathcal{X}} \lvert s(y,t) - s^\circ(y,t) \rvert^2 p_t(y) \,\nu(\diff{y}) \diff{t} \\ 
&= 4C(\mathcal{L}) \E^\mu[L_s(X_0) - L_{s^\circ}(X_0)].
\end{align*} 
\end{proof}

 This result renders the reasoning in \citet[Theorem C.4]{oko23} valid up to a multiplicative factor that only results in a minor change of  constants in their generalisation error upper bound. For completeness we state a corrected version of \citet[Theorem C.4]{oko23} in the general context of this section with corrected constants. 
 
\begin{theorem}
	Suppose that $\sup_{s \in \mathcal{S}} \lVert L_{s} \rVert_{\mathcal{X}} \leq C(\mathcal{L})$, where $C(\mathcal{L}) < \infty$.  Then, for any $\delta > 0$ such that $\mathcal{N}(\mathcal{L},\lVert \cdot \rVert_{\mathcal{X}}, \delta) \geq 3$, it holds that
	\begin{equation*} 
		\begin{split}
			&\E\Big[\int_{\underline{T}}^{\overline{T}} \int_{\mathcal{X}} \lvert \hat{s}(x,t) - \nabla \log p_t(x) \rvert^2 p_t(x) \,\nu(\diff{x}) \diff{t}  \Big] \\ 
			&\, \leq 2 \inf_{s \in \mathcal{S}} \int_{\underline{T}}^{\overline{T}} \int_{\mathcal{X}} \lvert s(x,t) - \nabla \log p_t(x) \rvert^2 p_t(x) \,\nu(\diff{x}) \diff{t} + \frac{2C(\mathcal{L})}{n}\Big(\frac{145}{9}\log \mathcal{N}(\mathcal{L}, \lVert \cdot \rVert_{\cX}, \delta) + 160\Big)\\
            &\qquad+ 5\delta.
		\end{split}
	\end{equation*}
\end{theorem}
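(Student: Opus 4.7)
The plan is to adapt the argument of \cite[Theorem C.4]{oko23} in light of the corrected Bernstein condition \eqref{eq:bernstein_condition}. First, the denoising/explicit equivalence \eqref{eq:equiv_denoise} reduces the quantity of interest to $\E[\E^\mu[L_{\hat{s}}(X_0) - L_{s^\circ}(X_0)]]$. Choose a (data-independent) near-minimiser $s^* \in \mathcal{S}$ of $s \mapsto \E^\mu[L_{s} - L_{s^\circ}]$ and decompose, using $\hat{\E}[L_{\hat{s}}] \le \hat{\E}[L_{s^*}]$ (which is the defining property of $\hat{s}$),
\[
\E^\mu[L_{\hat{s}} - L_{s^\circ}] \le (\E^\mu - \hat{\E})[L_{\hat{s}} - L_{s^\circ}] + (\hat{\E} - \E^\mu)[L_{s^*} - L_{s^\circ}] + \E^\mu[L_{s^*} - L_{s^\circ}],
\]
where $\hat{\E}$ denotes the empirical average over $Y_1, \ldots, Y_n$.

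The centred variables $L_s - L_{s^\circ}$ satisfy $|L_s - L_{s^\circ}| \le 2C(\mathcal{L})$ pointwise, and by \eqref{eq:bernstein_condition} have variance at most $4C(\mathcal{L}) \E^\mu[L_s - L_{s^\circ}]$. Bernstein's inequality therefore gives, for any fixed $s$ and $u > 0$,
\[
|(\hat{\E} - \E^\mu)[L_s - L_{s^\circ}]| \le \sqrt{\tfrac{8 C(\mathcal{L})\E^\mu[L_s - L_{s^\circ}]\, u}{n}} + \tfrac{4C(\mathcal{L})u}{3n},
\]
with probability at least $1 - e^{-u}$. For the middle term in the decomposition above this is applied to the fixed element $s^*$ directly. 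To handle the data-dependent deviation $(\E^\mu - \hat{\E})[L_{\hat{s}} - L_{s^\circ}]$, I fix a minimal $\delta$-cover $\{L_{s_1}, \ldots, L_{s_N}\}$ of $\mathcal{L}$ with $N = \mathcal{N}(\mathcal{L}, \lVert \cdot \rVert_{\mathcal{X}}, \delta)$, apply Bernstein uniformly over the cover at level $u+\log N$, and for $i^* = i^*(\omega)$ satisfying $\|L_{\hat{s}} - L_{s_{i^*}}\|_{\mathcal{X}} \le \delta$ use the trivial estimates $|\E^\mu[L_{\hat{s}} - L_{s_{i^*}}]| \le \delta$ and $|(\E^\mu - \hat{\E})[L_{\hat{s}} - L_{s_{i^*}}]| \le 2\delta$ to transfer the Bernstein bound to $\hat{s}$ at the cost of an additive $O(\delta)$.

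At this stage both deviation terms are controlled by expressions of the shape $\sqrt{C(\mathcal{L})\E^\mu[\cdot]\, u/n}$. Applying the weighted AM--GM inequality $\sqrt{ab} \le \alpha a + b/(4\alpha)$ with a suitably small weight (e.g.\ $\alpha = 1/3$, giving the leading factor $(1+\alpha)/(1-\alpha) = 2$) lets me absorb a fraction of $\E^\mu[L_{\hat{s}} - L_{s^\circ}]$ into the left-hand side, yielding the factor $2$ in front of $\E^\mu[L_{s^*} - L_{s^\circ}]$ required by the theorem. Integrating the resulting tail bound of the form $\PP(\cdot > c\, u/n) \le (N+1) e^{-u}$ against $u \in [0,\infty)$ then converts the high-probability statement into the required expectation bound: the logarithmic integration contributes the $\log \mathcal{N}(\mathcal{L}, \lVert\cdot\rVert_{\mathcal{X}}, \delta)$ term and the remaining tail an $O(1)$ contribution. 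The main obstacle is not conceptual but numerical: the factor $4$ in \eqref{eq:bernstein_condition} (as opposed to the incorrect $1$ used in \cite{oko23}) multiplies through the square root in Bernstein and enlarges every downstream constant, and carefully propagating these enlargements through the AM--GM step and the final tail integration is exactly what produces the precise values $145/9$, $160$ and $5$ in the statement.
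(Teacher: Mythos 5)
Your proposal matches the paper's intended approach: the paper does not give a self-contained proof of this theorem but explicitly defers to the proof of \cite[Theorem C.4]{oko23} after replacing the incorrect constant in the Bernstein condition by the factor $4$ established in \eqref{eq:bernstein_condition}, and your decomposition, covering argument, Bernstein-plus-weighted-AM--GM absorption, and tail integration are exactly the steps of that proof with the enlarged constant propagated through. The only thing you leave implicit is the final bookkeeping that yields the specific values $\tfrac{145}{9}$, $160$ and $5$, but the paper itself leaves that same bookkeeping to the reader.
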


\section{Technical results on neural network approximations}\label{app:neural}
We first consider neural network approximations of products.
\begin{lemma}
\label{lemma:mult_network}
    For any $C\ge1$ and $m, d\in\N$, there exist neural networks $\varphi^{\mathrm{mult}}_m\in\Phi(L, W, S, B)$ and $\varphi^{\mathrm{mult}, d}_m\in\Phi(L, W_d, d\cdot S, B)$ satisfying
    \[
        |\varphi^{\mathrm{mult}}_m(x, y)-xy|
        \le C2^{-m},\quad x\in[0, 1],y\in[-C, C].
    \]
    and
    \[
        |\varphi^{\mathrm{mult}, d}_m(x, y)-xy|
        \le\sqrt{d}C2^{-m},\quad x\in[0, 1],y\in[-C, C]^d.
    \]
    Furthermore, $\varphi^{\mathrm{mult}}(0, y)=\varphi^{\mathrm{mult}}(x, 0)=0$.
    The sizes of the networks are evaluated as $L=m+8$, $S=58+16m$, $B=C$ and
    \begin{gather*}
        W
        =\mat{2 & 3 & 3 & \smash[b]{\underbrace{\begin{matrix}
            12 & \cdots & 12
        \end{matrix}}_{m+2\text{ times}}} & 2 & 2 & 1}^\top,\quad
        (W_d)_i
        =\begin{cases}
            d+1, &\text{ if }i=1\\
            d\cdot W_i &\text{ otherwise.}
        \end{cases}
    \end{gather*}
    In particular, we have $L, S\lesssim m$, $\n{W}_{\infty}\lesssim1$, $\n{W_d}_{\infty}\lesssim d$ and $B\lesssim C$.
\end{lemma}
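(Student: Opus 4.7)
The plan is to build $\varphi^{\mathrm{mult}}_m$ from Yarotsky's classical ReLU approximation of the squaring function combined with a polarisation identity, and to obtain $\varphi^{\mathrm{mult},d}_m$ by parallelising $d$ scalar copies sharing the $x$-channel. The argument splits into three steps, with the explicit size accounting handled at the end.

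\medskip

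\textbf{Step 1 (Yarotsky squaring).} Let $g(x)=2\sigma(x)-4\sigma(x-\tfrac12)+2\sigma(x-1)$ be the tent map on $[0,1]$ and $g_s=g\circ\cdots\circ g$ ($s$-fold composition). The approximation
\[
\widehat{\mathrm{sq}}_m(x)=x-\sum_{s=1}^{m}4^{-s}g_s(x),\qquad x\in[0,1],
\]
satisfies the standard bound $|\widehat{\mathrm{sq}}_m(x)-x^2|\le 2^{-2m-2}$ and, crucially for the vanishing property, $\widehat{\mathrm{sq}}_m(0)=0$ because each $g_s(0)=0$. It can be realised as a ReLU network of depth $m+O(1)$ with constant width, using a running-accumulator channel for $\sum_{s}4^{-s}g_s$ alongside a channel for the current iterate $g_s$.

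\medskip

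\textbf{Step 2 (scalar product with vanishing on the axes).} For $a,b\in[0,1]$ the identity
\[
ab=2\bigl(\tfrac{a+b}{2}\bigr)^{2}-2\bigl(\tfrac{a}{2}\bigr)^{2}-2\bigl(\tfrac{b}{2}\bigr)^{2}
\]
motivates
\[
\widehat{ab}_m\coloneqq 2\widehat{\mathrm{sq}}_m\bigl(\tfrac{a+b}{2}\bigr)-2\widehat{\mathrm{sq}}_m\bigl(\tfrac{a}{2}\bigr)-2\widehat{\mathrm{sq}}_m\bigl(\tfrac{b}{2}\bigr),
\]
which obeys $|\widehat{ab}_m-ab|\lesssim 2^{-2m}$ and, by $\widehat{\mathrm{sq}}_m(0)=0$, the \emph{exact} nullity $\widehat{0\cdot b}_m=\widehat{a\cdot 0}_m=0$: at $a=0$ the first and third terms cancel while the middle one vanishes, and symmetrically at $b=0$. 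To cope with signed $y\in[-C,C]$ I rescale and split, putting $u_\pm\coloneqq\sigma(\pm y/C)\in[0,1]$ (so $y/C=u_+-u_-$) and setting
\[
\varphi^{\mathrm{mult}}_m(x,y)\coloneqq C\bigl(\widehat{xu_+}_m-\widehat{xu_-}_m\bigr).
\]
The error bound $|\varphi^{\mathrm{mult}}_m(x,y)-xy|\le C\cdot 2^{-m}$ is immediate (absorbing universal constants into $m$); further $\varphi^{\mathrm{mult}}_m(0,y)=0$ because both sub-products vanish, and $\varphi^{\mathrm{mult}}_m(x,0)=0$ since $u_+=u_-=0$.

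\medskip

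\textbf{Step 3 ($d$-dimensional extension and size accounting).} For $\varphi^{\mathrm{mult},d}_m$ with $y\in[-C,C]^d$ I parallelise $d$ copies of the scalar construction sharing the common $x$-input via the parallelisation trick from Section~\ref{sec:setting}; the aggregate error $\sqrt{d}\,C\cdot 2^{-m}$ follows from the Euclidean estimate $|v|\le\sqrt{d}\,\|v\|_\infty$ applied componentwise. The main obstacle is the explicit layer-by-layer tally needed to match the claimed widths $(2,3,3,12,\ldots,12,2,2,1)^\top$, sparsity $S=58+16m$ and weight bound $B=C$: the first two thin layers prepare $u_\pm$ and the linear combinations feeding the Yarotsky subnetworks; the $m+2$ middle layers run these subnetworks in parallel at total width twelve, each carrying a current iterate, a running accumulator and a propagated copy of its input; and the closing thin layers combine the outputs with weights in $\{\pm 2,\pm C\}$, whence $B=C$. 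From this architecture the asymptotic bounds $L,S\lesssim m$, $\|W\|_\infty\lesssim 1$ (resp.\ $\lesssim d$ for the parallelised version), $B\lesssim C$ are immediate; pinning down the integer constants $58$ and $16$ is then a finite combinatorial count.
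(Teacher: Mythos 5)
Your construction is correct in spirit and lands on the same three pillars the paper uses: a Yarotsky‐type squaring subnetwork, the polarisation identity, and a positive/negative split to handle signed $y$, followed by parallelisation for the vector case. The key difference is that the paper does not rebuild the core multiplier from scratch: it cites \cite[Lemma A.2]{schmidthieber20}, which supplies a ready‐made network $\overline{\varphi}^{\mathrm{mult}}_m\in\Phi(m+4,(2,6,\ldots,6,1)^\top,24+16m,1)$ with $|\overline{\varphi}^{\mathrm{mult}}_m(x,y)-xy|\le 2^{-m}$, exact vanishing on the axes, and — importantly — all weights bounded by $1$. The sign is then handled via $\mathrm{sgn}(y)\overline{\varphi}^{\mathrm{mult}}_m(x,|y|)=\sigma(\overline{\varphi}^{\mathrm{mult}}_m(x,\sigma(y)))-\sigma(\overline{\varphi}^{\mathrm{mult}}_m(x,\sigma(-y)))$, i.e.\ the split is applied to $y$ \emph{before} the multiplier, as in your construction, and the two branches are subtracted afterwards; the paper's extra outer $\sigma$ is a cosmetic improvement, not essential.

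What your route buys is self‐containedness; what it costs is the exact bookkeeping that the lemma actually asserts. Two concrete points. First, the raw tent‐map realisation $g(x)=2\sigma(x)-4\sigma(x-\tfrac12)+2\sigma(x-1)$ uses a weight of magnitude $4$, so your network would only satisfy $B=\max(4,C)$ rather than the stated $B=C$ for $1\le C<4$; Schmidt--Hieber redistribute such weights over layers to get $B=1$ before rescaling, which is what makes $B=C$ hold exactly. Second, your error estimate ``$\le C2^{-m}$ by absorbing universal constants into $m$'' is not quite a proof of the stated inequality \emph{for every} $m\in\N$: polarisation plus the two-branch split gives something like $3C\cdot 2^{-2m}$, which is $\le C2^{-m}$ only for $m\ge 2$, and the claim is for all $m\in\N$. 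Both issues are easily fixable and irrelevant for the asymptotic version $\widetilde\Phi(m,1,m,C)$ that is actually invoked downstream (Lemma \ref{lem:mult_network_asymp}), but as written your argument establishes the asymptotic statement rather than the exact constants $L=m+8$, $S=58+16m$, $B=C$, and the specific width vector $W$. If you want the lemma as stated, the cleanest route is exactly the paper's: import the normalised multiplier from \cite[Lemma A.2]{schmidthieber20} and only do the sign‐splitting, rescaling, and parallelisation yourself.
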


\begin{proof}
    We proceed as in the proof of \citet[Lemma F.6]{oko23}, but adapted to this specific setting.
    Thus, let $m\in\N$ and $C\ge0$ be fixed.
    Then, by \citet[Lemma A.2]{schmidthieber20}, there exists a neural network $\overline{\varphi}^{\mathrm{mult}}_m\in\Phi(m+4, W_0, 24+16m, 1)$, where
    \begin{gather*}
        W_0
        =\mat{2 & \smash[b]{\underbrace{\begin{matrix}
            6 & \cdots & 6
        \end{matrix}}_{m+2\text{ times}}} & 1}^\top, \\
    \end{gather*}
    satisfying $|\overline{\varphi}^{\mathrm{mult}}_m(x, y)-xy|\le 2^{-m}$ and $\overline{\varphi}^{\mathrm{mult}}_m(x, 0)=\overline{\varphi}^{\mathrm{mult}}_m(0, y)=0$ for all $x, y\in[0, 1]$.
    Thus, for $y\in[-1, 1]$, we have $|\mathrm{sgn}(y)\overline{\varphi}^{\mathrm{mult}}_m(x, |y|)-xy|\le 2^{-m}$.
    Note then that
    \begin{align*}
        \mathrm{sgn}(y)\overline{\varphi}^{\mathrm{mult}}_m(x, |y|)
        &=\sigma_R\big(\overline{\varphi}^{\mathrm{mult}}_m(x, \sigma_R(y))\big)-\sigma_R\big(\overline{\varphi}^{\mathrm{mult}}_m(x, \sigma_R(-y))\big) \\
        &=\overline{\varphi}^{{\mathrm{mult}}, 3}_m\circ\overline{\varphi}^{{\mathrm{mult}}, 2}_m\circ\overline{\varphi}^{{\mathrm{mult}}, 1}_m(x, y),
    \end{align*}
    where
    \begin{alignat*}{2}
        \overline{\varphi}^{{\mathrm{mult}}, 1}_m
        &=I_3\sigma\mat{1 & 0 \\ 0 & 1 \\ 0 & -1}
        &&\in\Phi(1, \mat{2 & 3 & 3}^\top, 6, 1), \\
        \overline{\varphi}^{{\mathrm{mult}}, 3}_m
        &=\mat{1 & -1}\sigma I_2
        &&\in\Phi(1, \mat{2 & 2 & 1}^\top, 4, 1),
    \end{alignat*}
    and $\overline{\varphi}^{{\mathrm{mult}}, 2}_m(x, y, z)=\mat{\overline{\varphi}^{\mathrm{mult}}_m(x, y) & \overline{\varphi}^{\mathrm{mult}}_m(x, z)}^\top$.
    Thus, $\overline{\varphi}^{{\mathrm{mult}}, 2}_m\in\Phi(m+4, W_1, 48+32m, 1)$ where $W_1$ is the parallelization of $W_0$ with itself, and hence $\mathrm{sgn}(y)\overline{\varphi}^{\mathrm{mult}}_m(x, |y|)\coloneqq\widetilde{\varphi}^{\mathrm{mult}}_m(x, y)\in\Phi(m+8, W, 58+16m, 1)$, where $W$ is as specified in the statement of the lemma.
    Finally, if $y\in[-C, C]$, we have similarly $|C\widetilde{\varphi}_m^{\mathrm{mult}}(x, y/C)-xy|\le C2^{-m}$, and since these are simply linear transformations (i.e., neural networks of depth $0$), it follows that $C\widetilde{\varphi}_m^{\mathrm{mult}}(x, y/C)\coloneqq\varphi_m^{\mathrm{mult}}(x, y)\in\Phi(m+8, W, 58+16m, C)$.
    Parallelization of $\varphi_m^{\mathrm{mult}}$ a total of $d$ times yields a network $\varphi_m^{{\mathrm{mult}}, d}\in\Phi(m+8, W_d, d(58+16m), C)$ with $W_d$ as specified above, which satisfies
    \[
        \max_{i\in\{1, \ldots, d\}}|(\varphi_m^{{\mathrm{mult}}, d}(x, y))_i-xy_i|\le C2^{-m},\quad x\in[0, 1],y\in[-C, C]^d,
    \]
    implying the desired result since $|y|\le\sqrt{d}\n{y}_\infty$ for $y\in\R^d$.
    \end{proof}

Next, we consider the reciprocals.
Note that by not relying on the comparatively slow converging Taylor series of the reciprocal, we achieve a substantially smaller network size than \citet[Lemma F.7]{oko23}, who achieve a network size of $\widetilde{\Phi}(m^2, m^3, m^4, 2^{2m})$ (assuming, as they do, that $m=\underline{k}=\overline{k}$).

\begin{lemma}
    For any $\underline{k}, \overline{k}, m\in\N$ with $m>\overline{k}$, there exists a neural network $\varphi_m^\mathrm{rec}\in\Phi(L, W, S, B)$ satisfying
    \[
        |\varphi_m^{\mathrm{rec}}(x)-\frac{1}{x}|\le2^{-m},\quad x\in[2^{-\underline{k}}, 2^{\overline{k}}].
    \]
    The size of $\varphi_m^{\mathrm{rec}}$ is evaluated as
    \begin{align*}
        L
        &=\big(4(\underline{k}+\overline{k})+2m+17\big)\lceil\log_2(\underline{k}+m+2)\rceil
        +2\lceil\log_2(\underline{k}+\overline{k})\rceil+1, \\
        S
        &=\big(260+68(\underline{k}+\overline{k}+m)\big)\lceil\log_2(\underline{k}+m+2)\rceil+8(\underline{k}+\overline{k}),
    \end{align*}
    $B=2^{2(\underline{k}+\overline{k})}$ and $W$ is the concatenation of $\lceil\log_2(\underline{k}+m+2)\rceil$ copies of $W_{\mathrm{iter}}$ and $W_{\mathrm{init}}$, where
    \begin{gather*}
        W_\mathrm{iter}
        =\mat{2 & \smash[b]{\underbrace{\begin{matrix}
            7 & \cdots & 7
        \end{matrix}}_{2(\underline{k}+\overline{k})+m+3\text{ times}}} & 2 & \smash[b]{\underbrace{\begin{matrix}
            7 & \cdots & 7
        \end{matrix}}_{2(\underline{k}+\overline{k})+m+3\text{ times}}} & 2}^\top, \\
    \end{gather*}
    and
    \[
        W_{\mathrm{init}}
        =\mat{
            1 & \underline{k}+\overline{k} & 2^{\lceil\log_2(\underline{k}+\overline{k})\rceil} & 2^{\lceil\log_2(\underline{k}+\overline{k})\rceil-1} & 2^{\lceil\log_2(\underline{k}+\overline{k})\rceil-1} & \cdots & 2 & 2 & 1
        }^\top.
    \]
    In particular, we have $L, S\lesssim (k+m)(\log(k+m))$, $\n{W}_\infty\lesssim k$, where $k=\underline{k}+\overline{k}$.
\end{lemma}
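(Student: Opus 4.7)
The natural strategy is Newton--Raphson iteration, which converges quadratically to the reciprocal. For fixed $x > 0$, the map $y \mapsto y(2-xy)$ satisfies $1 - x \cdot y(2-xy) = (1-xy)^2$, so setting $\epsilon_n := 1 - xy_n$ gives $\epsilon_{n+1} = \epsilon_n^2$ and $|y_n - 1/x| = |\epsilon_n|/x$. If we can manufacture an initial $y_0$ satisfying $|\epsilon_0| \le 1/2$ for every $x \in [2^{-\underline{k}}, 2^{\overline{k}}]$, then after $K$ iterations $|y_K - 1/x| \le 2^{\underline{k} - 2^K}$, so $K = \lceil\log_2(\underline{k}+m+2)\rceil$ iterations deliver the target $2^{-m}$ accuracy; this is exactly the number of $W_\mathrm{iter}$ blocks stacked in the claimed architecture.

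The first step is to realise the initial-guess subnetwork via $W_\mathrm{init}$. A dyadic partition of $[2^{-\underline{k}}, 2^{\overline{k}}]$ into the $\underline{k}+\overline{k}$ pieces $[2^{j}, 2^{j+1}]$, $j = -\underline{k},\ldots, \overline{k}-1$, allows one to define $y_0(x)$ to be the piecewise-affine function that equals, say, $\tfrac{3}{2}\cdot 2^{-j-1}$ on the $j$th piece, for which $x y_0 \in [3/4, 3/2]$ and hence $|\epsilon_0| \le 1/2$. Such a piecewise-affine output can be assembled as a sum of $\underline{k}+\overline{k}$ ReLU hat functions (width $\underline{k}+\overline{k}$ in the second layer of $W_\mathrm{init}$) and then collapsed via a binary tree of max gates using $\max(a,b) = b + \sigma(a-b)$, producing layers of geometrically halving widths $2^{\lceil\log_2(\underline{k}+\overline{k})\rceil},\ldots,2,1$. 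The large weight bound $B = 2^{2(\underline{k}+\overline{k})}$ comfortably accommodates the hat coefficients and any multiplicative rescalings needed downstream.

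Each iteration block is a parallel composition of two multiplication networks from Lemma \ref{lemma:mult_network}, together with the required linear operations. One multiplication computes $xy$, a linear layer forms $2-xy$, a second multiplication computes $y(2-xy)$, and throughout one carries $x$ forward via the identity $z = \sigma(z)-\sigma(-z)$ so that the next block can reuse it. Choosing the per-step precision parameter for $\varphi^{\mathrm{mult}}$ of order $2(\underline{k}+\overline{k}) + m + O(1)$ explains the inner hidden widths of $7$ and the depth contribution $4(\underline{k}+\overline{k}) + 2m + O(1)$ per block, as well as the sparsity bound of the same order per block. Concatenating $K$ such blocks on top of the initial subnetwork then yields the stated $L$ and $S$ of order $(\underline{k}+\overline{k}+m)\log(\underline{k}+m)$.

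The main technical obstacle is controlling the contamination of quadratic convergence by the additive errors introduced at each \emph{approximate} multiplication: naively composing $K$ approximate Newton steps allows the multiplicative noise to accumulate geometrically and spoil the bound. The remedy is to calibrate the multiplication accuracy to be at worst $2^{-(2(\underline{k}+\overline{k})+m+3)}$, so that the additive per-step error is always dominated by the prevailing $\epsilon_n^2$; combined with the invariant $|\epsilon_0|\le 1/2$, this preserves quadratic contraction throughout all $K$ iterations, and after the final step one recovers $|\varphi_m^\mathrm{rec}(x) - 1/x| \le 2^{-m}$ uniformly on $[2^{-\underline{k}}, 2^{\overline{k}}]$.
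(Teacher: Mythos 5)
You take the same route as the paper: Newton--Raphson iteration with an initial piecewise-linear guess of relative accuracy $1/2$, followed by $K = \lceil\log_2(\underline{k}+m+2)\rceil$ approximate Newton steps built from two multiplication sub-networks each, assembled in precisely the architecture $W_{\mathrm{iter}}$/$W_{\mathrm{init}}$. However, two steps that actually carry the argument are wrong or missing.

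First, the initial guess you propose, $y_0(x) = \tfrac{3}{2}\cdot 2^{-j-1}$ on $[2^j, 2^{j+1}]$, is piecewise \emph{constant}, hence discontinuous and not representable by a ReLU network; the later appeal to ``hat functions'' does not repair this mismatch. The paper instead uses $\varphi^{\mathrm{init}}(x) = \max_j \ell_j(x)$, the upper envelope of the secants $\ell_j$ of $x\mapsto 1/x$ between consecutive dyadic nodes, which is continuous piecewise affine, is realised by a binary max-tree with exactly the width profile $W_{\mathrm{init}}$, and satisfies $|\varphi^{\mathrm{init}}(x)-1/x|\le \tfrac{1}{2}x^{-1}$, equivalently $|\epsilon_0|\le 1/2$.

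Second, and more seriously, your error accounting is unsound. The assertion that the additive per-step multiplication error ``is always dominated by the prevailing $\epsilon_n^2$'' fails precisely where it matters: once $\epsilon_n$ is small, $\epsilon_n^2$ drops far below any fixed additive tolerance, quadratic contraction is lost, and the real question is at what level the iterates \emph{stabilise}. The paper's argument is a stabilisation argument: with per-step additive error $2^{-(m+1)}$ in $|\widetilde{x}_n - 1/x|$, one has $e_n(x) \le x\, e_{n-1}(x)^2 + 2^{-(m+1)}$; writing $d_n(x) = e_n(x) - x^{-1}2^{-2^n}$, showing $d_n$ is non-decreasing and convex reduces the uniform bound to checking endpoints, and a direct computation exhibits $e_n(2^{m-1}) \equiv 2^{-m}$ as a fixed point of the recursion. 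This is also where the standing hypothesis $m>\overline{k}$ enters (it guarantees $2^{\overline{k}}\le 2^{m-1}$, so the fixed point at $2^{m-1}$ controls the right endpoint via convexity). Your sketch never uses $m>\overline{k}$, which is a telltale sign the error analysis has not closed; without the stabilisation step, the uniform bound $|\varphi_m^{\mathrm{rec}}(x)-1/x|\le 2^{-m}$ on $[2^{-\underline{k}}, 2^{\overline{k}}]$ does not follow from quadratic contraction alone.
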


\begin{proof}
    Let $\underline{k}, \overline{k}, m\in\N$ with $m>\overline{k}$ be fixed.
    We approximate the reciprocal by Newton--Raphson iterations, adapted to neural networks.
    In a usual Newton--Raphson scheme approximating $x^{-1}$ for some $x>0$, we would take $x_0$ to be some initial approximation and set $x_n=x_{n-1}(2-xx_{n-1})$ for $n\in\N$.
    This, however, involves two multiplications by non-constants in each iteration, and as such is not directly accessible by neural networks.
    To overcome this, suppose that we have access to a neural network $\varphi_m^{\mathrm{iter}}$ such that $|\varphi_{m}^{\mathrm{iter}}(x, y)-x(2-xy)|\le2^{-(m+1)}$ as well as a neural network $\varphi^{\mathrm{init}}$ such that $|\varphi^{\mathrm{init}}-x^{-1}|\le\frac{1}{2}x^{-1}$.
    Then, setting $\widetilde{x}_0=\varphi^{\mathrm{init}}(x)$ and $\widetilde{x}_{n}=\varphi^{\mathrm{iter}}_m(\widetilde{x}_{n-1}, x)$ for $n\in\N$, we claim that $|\widetilde{x}_{n_0}-x^{-1}|\le 2^{-m}$ where $n_0=\lceil\log_2(\underline{k}+m+2)\rceil$.
    To show this, we first find by the recursive definition of $\widetilde{x}_n$ that
    \[
        |\widetilde{x}_{n+1}-x^{-1}|
        =|\varphi_m^{\mathrm{iter}}(\widetilde{x}_n, x)-x^{-1}|
        \le|\widetilde{x}_n(2-x\widetilde{x}_n)-x^{-1}|+2^{-(m+1)}
        =x(\widetilde{x}_n-x^{-1})^2+2^{-(m+1)}.
    \]
    Hence, setting $e_n(x)\coloneqq xe_{n-1}^2(x)+2^{-(m+1)}$ with $e_0(x)\coloneqq\frac{1}{2}x^{-1}$, we find that $|\widetilde{x}_n-x^{-1}|\le e_n(x)$ for all $n\in\N$.
    To show that $e_{n_0}(x)\le 2^{-m}$ on $[2^{-\underline{k}}, 2^{\overline{k}}]$, we introduce $d_n(x)\coloneqq e_n(x)-\frac{1}{x}2^{-2^n}$ for $n\in\N$ and $x\in[2^{-\underline{k}}, 2^{\overline{k}}]$. Since $\frac{1}{x}2^{-2^n}=x\big(\frac{1}{x}2^{-2^{n-1}}\big)^2$, $d_n$ satisfies the recursion $d_n(x)=xd_{n-1}^2(x)+2^{-2^n+1}d_{n-1}(x)+2^{-(m+1)}$ with $d_0\equiv0$.
    Furthermore, we claim that $d_n$ is non-decreasing and convex.
    Clearly, this is true for $d_0$, and by the recursion above we have
    \[
        d_n'(x)
        =\frac{\mathrm{d}}{\mathrm{d}x}[x d_{n-1}^{2}(x)+2^{-2^n+1}d_{n-1}(x)]
        =d_{n-1}^2(x)+2xd_{n-1}'(x)d_{n-1}(x)+2^{-2^n+1}d_{n-1}'(x).
    \]
    By induction, this is non-negative and non-decreasing, so $d_n$ is non-decreasing and convex as claimed.
    Since $e_n(x)=\frac{1}{x}2^{-2^n}+d_n(x)$, it follows that $e_n$ is also convex, and we thus only need to check that $e_{n_0}(2^{-\underline{k}})\vee e_{n_0}(2^{\overline{k}})\le 2^{-m}$.
    To this end, it is clear that for $n\ge2$, we have $d_n(2^{-\underline{k}})\ge d_{n+1}(2^{-\underline{k}})$, and some tedious but straightforward calculations show that
    \[
     	d_3(2^{-\underline{k}})
     	=\Big(1+2^{-\underline{k}}2^{-(m+1)}\Big(2^{-k}2^{-(m+1)}+\frac{17}{16}\Big)^2+\frac{1}{256}\Big(2^{-k}2^{-(m+1)}+\frac{17}{16}\Big)\Big)2^{-(m+1)}.
    \]
    A very rough estimate yields from this that $d_n(2^{-k})\le\frac{3}{2}2^{-(m+1)}=2^{-m}-2^{-(m+2)}$ for $n\ge3$.
	Now, since $\underline{k}+m+2>4$, we have $n_0\ge 3$, and so
    \[
        e_{n_0}(2^{-\underline{k}})
        =2^{\underline{k}}2^{-2^n}+d_{n_0}(2^{-\underline{k}})
        \le2^{\underline{k}}2^{-(\underline{k}+m+2)}+2^{-m}-2^{-(m+2)}
        =2^{-m}.
    \]
    As for $e_{n_0}(2^{\overline{k}})$, since $\overline{k}\le m-1$, the same convexity argument implies that it suffices to check that $e_{n_0}(2^{m-1})\le 2^{-m}$.
    But we clearly have that $e_0(2^{m-1})=2^{-m}$, and since
    \[
        2^{m-1}(2^{-m})^2+2^{-(m+1)}
        =2\cdot 2^{-(m+1)}
        =2^{-m},
    \]
    it follows that $e_n(2^{m-1})=2^{-m}$ for all $n\in\N$.
    Now, let us construct this network.
    For notation, let $k=\underline{k}+\overline{k}$.
    We begin by constructing $\varphi^{\mathrm{init}}$ as a piecewise linear function.
    In particular, for $i=0,\ldots,k$, let $p_i=2^{i-\underline{k}}$, and for $i=1,\ldots,k$ let $\ell_i$ be the linear interpolation between $(p_{i-1}, p_{i-1}^{-1})$ and $(p_i, p_i^{-1})$, i.e.,
    \[
        \ell_i(x)
        =\frac{p_i^{-1}-p_{i-1}^{-1}}{p_i-p_{i-1}}x-\Big(\frac{p_i^{-1}-p_{i-1}^{-1}}{p_i-p_{i-1}}p_i-\frac{1}{p_i}\Big)
        =\frac{1}{p_i}\big(3-\frac{2}{p_i}x\big),\quad x\in[p_{i-1}, p_i].
    \]
    Note that by convexity of the reciprocal function, we have $\ell_i(x)\ge x^{-1}$ for all $x\in[p_{i-1}, p_i]$, and hence $E_i(x)\coloneqq|\ell_i(x)-\frac{1}{x}|=\frac{1}{p_i}\big(3-\frac{2}{p_i}x\big)-\frac{1}{x}$.
    We then have that $E_i'(x)=\frac{1}{x^2}-\frac{2}{p_i^2}$, and since $E_i$ has exactly one extremal point and this is a maximum, we have that the error is maximized when $x=\frac{p_i}{\sqrt{2}}$ and that the corresponding error is $E_i(\frac{p_i}{\sqrt{2}})=\frac{3-2\sqrt{2}}{p_i}\le\frac{1}{2x}$.
    Thus, since $\ell_i$ is merely an affine transformation and hence clearly representable as a neural network, $\ell_i$ satisfies the properties of $\varphi^{\mathrm{init}}$ on $[p_{i-1}, p_i]$.
    Also, if $x\in[p_{i-1}, p_i]$, then $\ell_j(x)<x^{-1}$ for all $j\neq i$, and so $\varphi^{\mathrm{init}}(x)\coloneqq\max_{j=1,\ldots,k}\ell_j(x)$ works.
    Since $a\vee b=a+\sigma(b-a)$ for $a, b\ge0$, the function $(y_1, \ldots, y_{k})\mapsto\max_{j=1,\ldots,k}y_j$ is realized by a network $\varphi^{\max}\in\Phi(2n_{k}, W_{\max}, 5k, 1)$, where $n_{k}=\lceil\log_2(k)\rceil$ and
    \[
        W_{\max}
        =\mat{k & 2^{n_{k}} & 2^{n_{k}-1} & 2^{n_{k}-1} & \cdots 2 & 2 & 1}^\top.
    \]
    Thus, by parallelizing, we have $\varphi^{\mathrm{init}}\in\Phi(2(n_{k}+1), \mat{1 & k & W_{\max}^\top}^\top, 8k, 2^{2k})$.
    Next, we construct $\varphi^{\mathrm{iter}}_m$.
    Similarly to the proof of Lemma \ref{lemma:mult_network}, a small modification of \citet[Lemma A.2]{schmidthieber20} yields a network $\varphi^{\mathrm{mult}}_l\in\Phi(l+4, W_{\mathrm{mult}}, 24+16l, 2^{k})$ with
    \begin{gather*}
        W_\mathrm{mult}
        =\mat{2 & \smash[b]{\underbrace{\begin{matrix}
            6 & \cdots & 6
        \end{matrix}}_{l+2\text{ times}}} & 1}^\top, \\
    \end{gather*}
    satisfying $|\varphi^\mathrm{mult}_l(x, y)-xy|\le2^{k+1-l}$ for all $x\in[0, 2]$ and $y\in[0, 2^{k}]$.
    Since we always have $x\wedge \widetilde{x}_n\le 2$ and $|2-x\widetilde{x}_n|\le 2$, setting $\varphi^{\mathrm{iter}}_m(x, y)=\varphi^{\mathrm{mult}}_l(x, 2-\varphi^{\mathrm{mult}}_l(x, y))$ with $l\ge 2k+m+3$ works.
    Parallelizing with the identity, we can now simply chain these together without altering the size of the network other than increasing the width of each layer by $1$.
    That is, setting $\widetilde{\varphi}^{\mathrm{init}}(x)\coloneqq\mat{\varphi^{\mathrm{init}}(x) & x}^\top$ and $\widetilde{\varphi}^{\mathrm{iter}}_m(x, y)\coloneqq\mat{\varphi^{\mathrm{iter}}_m(x, y) & x}^\top$,
    \[
        \varphi_m^{\mathrm{rec}}(x)
        \coloneqq\varphi^{\mathrm{iter}}_m\circ\underbrace{\widetilde{\varphi}^{\mathrm{iter}}_m\circ\cdots\circ\widetilde{\varphi}^{\mathrm{iter}}_m}_{n_0-1\text{ times}}\circ\widetilde{\varphi}^{\mathrm{init}}(x),
    \] 
    yields the desired network.
\end{proof}

\begin{proof}[Proof of Lemma \ref{lem:cap_network}]
	We first approximate $t\mapsto\sqrt{t}$ by a piecewise linear function.
	To this end, let $t_i=2^{-m}+i/m$ for $i=0,\ldots,m$ such that $t_m\ge1$.
	Next, for $j=1,\ldots,m$ and $t\in[t_{j-1}, t_j]$, set
	\[
		\ell_j(t)
		=m(\sqrt{t_j}-\sqrt{t_{j-1}})(t-t_{j-1})+\sqrt{t_{j-1}},
	\]
	such that $\ell_j$ is the linear interpolation between the points $(t_{j-1}, \sqrt{t_{j-1}})$ and $(t_j, \sqrt{t_j})$.
	Since $t\mapsto\sqrt{t}$ is concave, it follows that, for $t\in[t_{i-1}, t_i]$, we have $\ell_i(t)\le\sqrt{t}$ while $\ell_j(t)\ge\sqrt{t}$ for all $j\neq i$.
	Hence, setting $\ell(t)=\min_{j\in[m]}\ell_j(t)$ yields the desired piecewise linear approximation, and since both affine functions and minima are exactly representable as neural networks, we find that $\ell\in\widetilde{\Phi}(\log m, m, m, \sqrt{m})$.
	We claim that setting $\varphi^{\mathrm{cap}}=\varphi^{\mathrm{rec}}_1\circ\ell$ yields the desired network.
	Indeed, for the size of the network, note that since $\ell(t)\in[2^{-m/2}, 1]$ for all $t\in[2^{-m}, 1]$, we have $\varphi^{\mathrm{rec}}_{1}\in\widetilde{\Phi}(m\log m, m, m\log m, 2^{m/2})$, and since this dominates the size of $\ell$, it follows that $\varphi^{\mathrm{cap}}$ is of the same size as $\varphi_1^{\mathrm{rec}}$.

	As for the claim that $\varphi^{\mathrm{cap}}(t)\asymp\frac{1}{\sqrt{t}}$, first note that
	\begin{align*}
		\varphi^{\mathrm{cap}}(t)
		=\frac{1}{\sqrt{t}}
		+\Big(\varphi^{\mathrm{rec}}_1\circ\ell(t)-\frac{1}{\ell(t)}\Big)
		+\Big(\frac{1}{\ell(t)}-\frac{1}{\sqrt{t}}\Big),
	\end{align*}
	where the second term is in $[-1/2, 1/2]$, whence
	\[
		\frac{1}{2\sqrt{t}}
		\le\frac{1}{\sqrt{t}}+\Big(\varphi^{\mathrm{rec}}_1\circ\ell(t)-\frac{1}{\ell(t)}\Big)
		\le\frac{3}{2\sqrt{t}}.
	\]
	For the last term, note that since $\ell(t_i)=\sqrt{t_i}$ for $i=0,\ldots,m$, we have
	\[
		\Big|\frac{1}{\ell(t)}-\frac{1}{\sqrt{t}}\Big|
		\le\frac{1}{2^{m/2}}-\frac{1}{\sqrt{2^m+m}}
		<\frac{1}{4},
	\]
	whereby
	\[
		\frac{1}{4\sqrt{t}}
		\le\varphi^{\mathrm{cap}}(t)
		\le\frac{7}{4\sqrt{t}},
	\]
	as desired.
\end{proof}

\paragraph{Acknowledgement}
Financial support from the Villum Synergy Programme, Project
No.\ 50099, is gratefully acknowledged.
\printbibliography
%\vskip 0.2in

\end{document}